\numberwithin{equation}{subsection}
\newcounter{keepeqno}
\newenvironment{num}
 {\setcounter{keepeqno}{\value{equation}}%
  \begin{list}{(\theequation)}{\usecounter{equation}}%
  \setcounter{equation}{\value{keepeqno}}}
 {\end{list}}
\newcommand{\BC}{{\mathbb {C}}}
\newcommand{\BN}{{\mathbb {N}}}
\newcommand{\BR}{{\mathbb {R}}}
\newcommand{\BZ}{{\mathbb {Z}}}
\newcommand{\CB}{{\mathcal {B}}}
\newcommand{\CO}{{\mathcal {O}}}
\newcommand{\CW}{{\mathcal {W}}}
\newcommand{\FA}{{\mathfrak {A}}}
\newcommand{\Fg}{{\mathfrak {g}}}
\newcommand{\Fl}{{\mathfrak {l}}}
\newcommand{\Fp}{{\mathfrak {p}}}
\newcommand{\Fz}{{\mathfrak {z}}}
\newcommand{\RM}{{\mathrm {M}}}
\newcommand{\RN}{{\mathrm {N}}}
\newcommand{\Ad}{{\mathrm{Ad}}}
\newcommand{\ab}{{\mathrm{ab}}}
\newcommand{\Gal}{{\mathrm{Gal}}}
\newcommand{\GL}{{\mathrm{GL}}}
\newcommand{\Ind}{{\mathrm{Ind}}}
\newcommand{\ord}{{\mathrm{ord}}}
\renewcommand{\Re}{{\mathrm{Re}}}
\newcommand{\reg}{{\mathrm{reg}}}
\newcommand{\Res}{{\mathrm{Res}}}
\newcommand{\SL}{{\mathrm{SL}}}
\newcommand{\Sym}{{\mathrm{Sym}}}
\newcommand{\sgn}{{\mathrm{sgn}}}
\newcommand{\tr}{{\mathrm{tr}}}
\newcommand{\vol}{{\mathrm{vol}}}
\newcommand{\wt}{\widetilde}
\newcommand{\wh}{\widehat}
\newcommand{\bs}{\backslash}
\def\alp{{\alpha}}
\def\del{{\delta}}
\def\Del{{\Delta}}
\def\diag{{\rm diag}}
\def\eps{{\epsilon}}
\def\std{\rm std}
\def\lam{{\lambda}}
\def\Lam{{\Lambda}}
\def\gam{{\gamma}}
\def\Gam{{\Gamma}}
\def\wb{\overline} 
\def\vpi{\varpi}
\def\LG{{}^{L}G}
\def\vphi{\varphi}
\def\p{\prime}
\def\LH{{}^{L}H}
\def\rss{\mathrm{rss}}
\newtheorem{thm}{Theorem}[subsection]
\newtheorem{defin}[thm]{Definition}
\newtheorem{rmk}[thm]{Remark}
\newtheorem{pro}[thm]{Proposition}
\newtheorem{lem}[thm]{Lemma}
\newtheorem{cor}[thm]{Corollary}
\newtheorem{assum}[thm]{Assumption}
\newcommand{\Rmnum}[1]{\expandafter\@slowromancap\romannumeral #1@}
\DeclarePairedDelimiter{\ceil}{\lceil}{\rceil}
\begin{document}

\title[Stable Transfer for $\Sym^{n}$ of $\GL_{2}$]{On the Stable Transfer for $\Sym^{n}$ Lifting of $\GL_{2}$}

\author{Daniel Johnstone}
\address{School of Mathematics\\
University of Minnesota\\
Minneapolis, MN 55455, USA}
\email{johnstod@umn.edu}

\author{Zhilin Luo}
\address{School of Mathematics\\
University of Minnesota\\
Minneapolis, MN 55455, USA}
\email{luoxx537@umn.edu}

\subjclass[2010]{Primary 11F70, 22E50; Secondary 11F85}

\keywords{Stable Transfer, Langlands Functoriality, Distribution Characters}

\begin{abstract}
Following the paradigm of \cite{MR3117742}, we are going to explore the stable transfer factors for $\Sym^{n}$ lifting from $\GL_{2}$ to $\GL_{n+1}$ over any local fields $F$ of characteristic zero with residue characteristic not equal to $2$. When $F=\BC$ we construct an explicit stable transfer factor for any $n$. When $n$ is odd, we provide a reduction formula, reducing the question to the construction of the stable transfer factors when the $L$-morphism is the diagonal embedding from $\GL_{2}(\BC)$ to finitely many copies of $\GL_{2}(\BC)$ under mild assumptions on the residue characteristic of $F$. With the assumptions on the residue characteristic, the reduction formula works uniformly over any local fields of characteristic zero, except that for $p$-adic situation we need to exclude the twisted Steinberg representations.
\end{abstract}

\maketitle

\tableofcontents

\section{Introduction}
Let $G$ be a connected reductive algebraic group defined over a local field $F$ of characteristic zero. 
The local reciprocity conjecture of R. Langlands \cite{langlandsproblems} asserts that the $L$-packets \cite[\S V]{MR546608} of irreducible admissible representations of $G(F)$ are parametrized by the admissible homomrphisms from the Weil-Deligne group $\CW_{F}$ to the $L$-group $\LG$ of $G$ modulo inner automorphisms of $\wh{G}$, where $\wh{G}$ is the dual group of $G$ \cite[\S III]{MR546608}. The parametrization, which is usually called the local Langlands correspondence, has been established for several cases. When $F$ is Archimedean it is known for any connected reductive algebraic groups \cite{MR1011897}. When $F$ is $p$-adic, it is known for $G=\GL_{n}$ \cite{MR1876802} \cite{MR1738446} \cite{MR3049932}, and quasi-split classical groups \cite{ar13} \cite{MR3338302}.

Langlands also proposed the functorial lifting conjecture \cite{langlandsproblems} which is a corollary of the local Langlands correspondence. Given two connected reductive algebraic groups $H$ and $G$ defined over $F$ and an $L$-homomorphism \cite[\S V]{MR546608}
$$
\rho:\LH\to \LG,
$$
there should exist a natural lifting from the $L$-packets of $H(F)$ to those of $G(F)$ along the morphism $\rho$. Moreover, the lifting should preserve temperness, i.e. the lifting of tempered $L$-packets should still be tempered $L$-packets \cite{ar13}.

From the work of Harish-Chandra \cite{MR145006} (see also \cite{SDPS99}), it is known that for any irreducible admissible representation $\pi$ of $H(F)$, the distribution character $\tr_{\pi}$ is given by a locally integrable function $\tr_{\pi}(h)$ on $H(F)$, which is locally constant (resp. analytic) on the regular semi-simple locus of $H(F)$ when $F$ is $p$-adic (resp. Archimedean). On the other hand, when $F$ is $p$-adic (resp. Archimedean), for a given a tempered $L$-packet $\Pi_{\vphi}$ of $H(F)$ associated to a tempered $L$-parameter $\vphi:\CW_{F}\to \LH$ (resp. $\vphi:W_{F}\to \LH$), it is expected that the formal sum of distribution characters
$$
\tr_{\Pi_{\vphi}} =\sum_{\pi\in \Pi_{\vphi}}
\tr_{\pi}
$$
is a stable distribution \cite{MR540901} on $H(F)$ (see also \cite[\S 8]{MR546618} and \cite{ar13}). Similarly, we have the corresponding tempered $L$-packet $\Pi_{\rho\circ \vphi}$ of $G(F)$, with an associated stable tempered character
$$
\tr_{\Pi_{\rho\circ \vphi}} = 
\sum_{\pi\in \Pi_{\rho\circ \vphi}}\tr_{\pi}
$$
on $G(F)$. In particular, the stable tempered characters $\tr_{\Pi_{\vphi}}$ and $\tr_{\Pi_{\rho\circ \vphi}}$ descend to distributions on the Steinberg-Hitchin bases $\FA_{H}$ and $\FA_{G}$ respectively \cite{MR2779866}, both of which are given by locally integrable functions on $\FA_{H}(F)$ and $\FA_{G}(F)$ that are locally constant (resp. analytic) on the regular semi-simple locus $\FA_{H}(F)^{\rss}$ and $\FA_{G}(F)^{\rss}$.

Langlands proposed a natural question in \cite{MR3117742}: Does there exist a distribution $\Theta^{\rho}(a_{H},a_{G})$ on $\FA_{H}(F)\times \FA_{G}(F)$ that yields the lifting for stable tempered characters, i.e.
\begin{equation}\label{eq:langlandsconjecture}
\tr_{\Pi_{\rho\circ \vphi}}(a_{G})
=\int_{\FA_{H}}
\Theta^{\rho}(a_{H},a_{G})\tr_{\Pi_{\vphi}}(a_{H})da_{H}.
\end{equation}

Langlands \cite{MR3117742} analyzed the question when $H(F)$ is a maximal torus in $G(F) = \SL_{2}(F)$ over any local fields of characteristic zero with residue characteristic not equal to $2$ and $\rho:\LH\to \LG$ is the natural embedding \cite[\S 3]{MR546618}. When $F$ is $p$-adic and $H(F)$ is an elliptic torus in $\SL_{2}(F)$, an elegant formula for $\Theta_{\rho}$ was obtained from the pioneer work of I. Gelfand, M. Graev and I. Pyatetski-Shapiro \cite{MR3468638}. Later it was generalized by one of the authors \cite{MR3705993} in his PhD thesis for the lifting from any unramified maximal elliptic torus $H(F)$ in $\SL_{\ell}(F)$ to $G(F)=\SL_{\ell}(F)$ under mild assumptions on the residue characteristic of $F$ and character formulas for the associated supercuspidal representations of $\SL_{\ell}$. Here $\ell$ is assumed to be a prime.

The purpose of this paper is to find an explicit construction of the stable transfer factors $\Theta^{\rho}$ for $\rho = \Sym^{n}$ lifting from $H=\GL_{2}$ to $G=\GL_{n+1}$ over any local field $F$ of characteristic zero. It is worthwhile to mention that this is the first case being investigated so far when both groups are non-abelian. We construct an explicit stable transfer factor $\Theta^{\rho}$ for $\rho= \Sym^{n}$ with all $n$ when $F=\BC$. When $n$ is odd we provide a reduction formula, reducing the construction of $\Theta^{\rho}$ to $\Theta^{\Del_{(n+1)/2}}$, which is for the case that  the $L$-morphism is the diagonal embedding $\Del_{(n+1)/2}$ from $\GL_{2}(\BC)$ to $(n+1)/2$-copies of $\GL_{2}(\BC)$. The reduction formula works uniformly over any local fields of characteristic zero under mild assumptions on the residue characteristic, except that when $F$ is $p$-adic we need to exclude the twisted Steinberg representations of $\GL_{2}(F)$.

In order to explain our results, we fix notation and conventions that will be used in the rest of this paper.

\subsubsection{Notation and Conventions.}

Throughout the paper, $F$ is always assumed to be a local field of characteristic zero with residue characteristic not equal to $2$. 
We fix a standard norm $|\cdot|$ on $F$ as follows.
\begin{itemize}
\item When $F = \BC$, let $|z| = (z\wb{z})^{1/2}$ for any $z\in \BC$ where $\wb{z}$ is the complex conjugation of $z$. Let $|z|_{\BC} = |z|^{2}$;

\item When $F=\BR$, $|\cdot|$ is the absolute value norm;

\item When $F$ is $p$-adic, let $\CO_{F}$ be the ring of integers of $F$ with maximal ideal $\Fp_{F}$. We fix a generator $\vpi$ of $\Fp_{F}$. Let $\ord:F\to \BZ\cup\{\infty \}$ be the standard valuation on $F$ satisfying $\ord(\vpi) = 1$. Let $q$ be the cardinality of the finite field $\CO_{F}/\Fp_{F}$. The norm is defined to be
$|x| = q^{-\ord(x)}$ for any $x\in F$. For any finite extension $E$ of $F$, the valuation $\ord$ and the norm $|\cdot|$ have natural extension to $E$, and the same notion will be used for $E$.
\end{itemize}

Let $\CW_{F}$ be the Weil-Deligne group of $F$ \cite{MR546607}. When $F$ is Archimedean, $\CW_{F}$ is the Weil group of $F$. When $F$ is $p$-adic, we take $\CW_{F} = W_{F}\times \SL_{2}(\BC)$ and the isomorphism between different forms of $\CW_{F}$ can be found in  \cite[\S 2.1]{MR2730575}.

For any $n\in \BN$, Let $\GL_{n}$ be the general linear group scheme defined over $\BZ$. Let the center of $\GL_{n}$ be $Z_{\GL_{n}}$.

Throughout the paper, the representation $\pi$ of $\GL_{n}(F)$ under consideration is always tempered, i.e. $\pi$ is an irreducible unitary representation of $\GL_{n}(F)$ whose matrix coefficients lie in $L^{2+\eps}(\GL_{n}(F)/Z_{\GL_{n}}(F))$ for any $\eps>0$. Let $\pi^{\infty}$ be the space of smooth vectors inside $\pi$. Sometimes we will abuse the notation to write $\pi$ for smooth vectors as well.

For any parabolic subgroup $P(F)$ of $\GL_{n}(F)$, the parabolic induction functor $\Ind^{\GL_{n}}_{P}$ is always assumed to be normalized in the sense that it sends (pre-)unitary representations to (pre-)unitary representations.

Let $B_{n}(F)$ be the standard upper triangular Borel subgroup of $\GL_{n}(F)$ with Levi subgroup given by the diagonal split torus $T_{n}(F)$.

Define the standard upper triangular parabolic subgroup $P_{n}(F)$ of $\GL_{n}(F)$ with Levi decomposition $P_{n}(F)= M_{n}(F)N_{n}(F)$ as follows,
\begin{itemize}
\item when $n$ is even, $P_{n}(F)$ is the standard parabolic subgroup associated to the partition $(2,2,...,2)$. Then $M_{n}(F)\simeq \GL_{2}(F)\times...\times \GL_{2}(F)$;

\item when $n$ is odd, $P_{n}(F)$ is the standard parabolic subgroup associated to the partition $(2,2,...,2,1)$. Then $M_{n}(F)\simeq \GL_{2}(F)\times ...\times \GL_{2}(F)\times \GL_{1}(F)$.
\end{itemize}

Let $\GL_{n}(F)^{\rss}$ be the set of regular semi-simple elements in $\GL_{n}(F)$. 

For any $x\in \GL_{n}(F)^{\rss}$, let $D_{\GL_{n}}(x) = |\det(1-\Ad(x))_{\Fg\Fl_{n}(F)/(\Fg\Fl_{n}(F))_{x}}|$ be the Weyl discriminant of $x$, where $\Fg\Fl_{n}(F)$ is the Lie algebra of $\GL_{n}(F)$, and $(\Fg\Fl_{n}(F))_{x}$ is the centralizer of $x$ in $\Fg\Fl_{n}(F)$ under the adjoint action $\Ad$ of $\GL_{n}(F)$ on $\Fg\Fl_{n}(F)$.

\subsubsection{Main results.}

The goal of the present paper is to explore the stable transfer factors when $H=\GL_{2}$, $G=\GL_{n+1}$, and $\rho$ is the $\Sym^{n}$ lifting from $\LH\simeq \GL_{2}(\BC)$ to $\LG\simeq \GL_{n+1}(\BC)$. 

Before introducing our results, let us mention that for $\GL_{n}$, by Hilbert 90 \cite{serregaloiscoh} and the cohomological parametrization of stable conjugacy classes \cite{MR540901}, the concept of stable conjugacy is equivalent to ordinary conjugacy. Therefore the $L$-packets of $\GL_{n}$ are always singleton \cite[\S 1]{MR546618}. It follows that for any (tempered) representation $\pi$ of $\GL_{n}(F)$, the stable character of $\pi$ is just the usual distribution character of $\pi$.

In Section \ref{sec:functorialimage}, we establish the following theorem, which says that the functorial image of tempered $L$-parameters of $\GL_{2}(F)$ are \emph{almost} parallel over any local fields of characteristic zero with residue characteristic not equal to $2$.

\begin{thm}\label{thm:functorialim:parallel}
For any tempered $L$-parameter $\vphi$ of $\GL_{2}(F)$, the following description for $\Sym^{n}\circ \vphi$ holds.
\begin{itemize}
\item When $F = \BC$, any tempered $L$-parameter $\vphi:W_{F}\to \GL_{2}(\BC)$ is of the form $\vphi = \chi_{1}\oplus \chi_{2}$, where $\chi_{i}$ is a unitary character of $F^{\times}$ $(i=1,2)$. Then
$$
\Sym^{n}\circ \vphi = 
\bigg\{
\begin{matrix}
\bigoplus_{k=1}^{n/2}
\big(
(\chi_{1}\chi_{2})^{(n-2k)/2}\otimes (\chi_{1}^{2k}\oplus \chi_{2}^{2k})\big) \bigoplus (\chi_{1}\chi_{2})^{n/2}, & \text{ if $n$ is even},\\
\bigoplus_{k=1}^{(n+1)/2}
\big(
(\chi_{1}\chi_{2})^{(n-2k+1)/2}\otimes (\chi^{2k-1}_{1}\oplus \chi^{2k-1}_{2})
\big)
, & \text{ if $n$ is odd}.
\end{matrix}
$$
The same description holds for real (resp. $p$-adic) field $F$ and $\vphi$ is a reducible tempered $L$-parameter of $W_{F}$ (resp. $\CW_{F}$).

\item When $F =\BR$, any irreducible tempered $L$-parameter $\vphi:W_{F}\to \GL_{2}(\BC)$ is parametrized by a pair $(l,t)\in \BN\times i\BR$ which is denoted by $\vphi_{(l,t)}$. Then
$$
\Sym^{n}\circ \vphi_{(l,t)} = 
\bigg\{
\begin{matrix}
\bigoplus_{k=1}^{n/2}
\big(
\chi_{(-1)^{l(n/2-k)},(n-2k)t} \otimes \vphi_{(2kl,2kt)}
\big) \bigoplus \chi_{(-1)^{nl/2}, nt}, & \text{ if $n$ is even},\\
\bigoplus_{k=1}^{(n+1)/2}
\big(
\chi_{(-1)^{(n-2k+1)l/2}, (n-2k+1)t} \otimes \vphi_{(2k-1)l, (2k-1)t}
\big)
, & \text{ if $n$ is odd}.
\end{matrix}
$$
Here for any $(i,t)\in \{\pm 1 \}\times \BC$ the quasi-character $\chi_{i,t}$ of $W_{F} = \BC^{\times}\cup j\BC^{\times}$ is defined as follows,
$$
\chi_{i,t}(z) = |z|^{t}, z\in \BC^{\times},\quad  \chi_{i,t}(j) = i.
$$

\item When $F$ is $p$-adic with residue characteristic not equal to $2$, any irreducible tempered $L$-parameter $\vphi:\CW_{F}\to \GL_{2}(\BC)$ is
\begin{enumerate}
\item\label{twistedsteinberg}
the tensor product of a unitary character $\chi$ of $W_{F}\to W_{F}^{\ab}\simeq F^{\times}$ \cite{MR0220701} with the standard representation $\std$ of $\SL_{2}(\BC)$. Then we have
$$
\Sym^{n}\circ (\chi\otimes \mathrm{std}) = \chi^{n}\otimes \Sym^{n};
$$

\item 
the representation $\Ind^{W_{F}}_{W_{E}}\theta$ obtained from an admissible pair $(E/F,\theta)$ \cite[18.2]{gl2llc}, where $E/F$ is a quadratic extension, $\theta$ is an admissible character of $W_{E}\to W^{\mathrm{ab}}_{E}\simeq E^{\times}$ \cite{MR0220701}, and $\theta|_{F^{\times}}$ is unitary.
Then
$$
\Sym^{n}\circ \Ind^{W_{F}}_{W_{E}}\theta = 
\bigg\{
\begin{matrix}
\bigoplus_{k=1}^{n/2}
\big(
\theta^{(n-2k)/2}\otimes \Ind^{W_{F}}_{W_{E}}\theta^{2k}
\big)
\bigoplus (\theta)^{n/2}, & \text{ if $n$ is even},\\
\bigoplus_{k=1}^{(n+1)/2}
\big(
\theta^{(n-2k+1)/2}\otimes\Ind^{W_{F}}_{W_{E}}\theta^{2k-1}
\big), & \text{ if $n$ is odd}.
\end{matrix}
$$
Here we abuse the notation viewing $\theta$ as a quasi-character of $W_{F}\to W^{\mathrm{ab}}_{F}\simeq F^{\times}$ via restriction.
\end{enumerate}
\end{itemize}
\end{thm}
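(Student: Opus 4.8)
The plan is to compute $\Sym^{n}\circ\vphi$ representation-theoretically, reducing everything to the elementary fact that $\Sym^{n}$ of the standard representation of $\GL_{2}$ decomposes as a sum of weight spaces, and then to identify those weight spaces with the listed $L$-parameters in each of the three cases for $F$. The unifying principle is this: if $\vphi$ restricted to a suitable subgroup (a maximal torus of $\GL_{2}(\BC)$, or the image of $W_{F}$ when $\vphi$ is reducible) is $\chi_{1}\oplus\chi_{2}$, then $\Sym^{n}(\chi_{1}\oplus\chi_{2})=\bigoplus_{j=0}^{n}\chi_{1}^{j}\chi_{2}^{n-j}$, and one simply groups the $j$-th and $(n-j)$-th summands together, factoring out the common power $(\chi_{1}\chi_{2})^{\min(j,n-j)}$; the middle term (for $n$ even, $j=n/2$) is left alone. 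This immediately gives the stated formula in the reducible case, and by the first bullet it transfers verbatim to reducible parameters over $\BR$ and over $p$-adic $F$.

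For the irreducible cases I would argue as follows. When $F=\BR$, the parameter $\vphi_{(l,t)}=\Ind_{\BC^{\times}}^{W_{\BR}}(\mu_{l,t})$ where $\mu_{l,t}(re^{i\theta})=r^{2t}e^{il\theta}$ (up to the normalization fixed in the excerpt). Its restriction to $\BC^{\times}$ is $\mu_{l,t}\oplus\mu_{-l,t}$, so $\Sym^{n}\circ\vphi_{(l,t)}|_{\BC^{\times}}=\bigoplus_{j=0}^{n}\mu_{(2j-n)l,nt}$. Now pair the summand $\mu_{(2j-n)l,nt}$ with $\mu_{(n-2j)l,nt}$: their sum is $W_{\BR}$-stable and, being induced from $\BC^{\times}$, equals $\chi\otimes\vphi_{((n-2j)l,\,(n-2j)t)}$ for an appropriate sign character $\chi$ on $j$; tracking how $j\mapsto(n-2j)$ twists the $j$-element of $W_{\BR}$ through the induction gives exactly the sign $(-1)^{(\text{middle index})\cdot l}$ in the statement, and the remaining one-dimensional middle term for $n$ even is $\det(\vphi_{(l,t)})^{n/2}$ up to the same sign bookkeeping. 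The $p$-adic case splits: for the twisted Steinberg type the identity $\Sym^{n}\circ(\chi\otimes\std)=\chi^{n}\otimes\Sym^{n}(\std)$ is immediate from multiplicativity of $\Sym^{n}$ under tensoring a character with a representation; for the dihedral type $\Ind_{W_{E}}^{W_{F}}\theta$ one repeats the $\BR$-argument verbatim with $\BC^{\times}\subset W_{\BR}$ replaced by $W_{E}\subset W_{F}$ — restrict to $W_{E}$, decompose $\Sym^{n}$, pair $\theta^{j}(\theta^{\sigma})^{n-j}$ with $\theta^{n-j}(\theta^{\sigma})^{j}$ where $\sigma$ generates $\Gal(E/F)$, factor out the $W_{F}$-invariant part $(\theta\theta^{\sigma})^{\min(j,n-j)}$ which descends to $N_{E/F}$ hence to a character of $W_{F}$, and re-induce the rest.

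The one genuinely delicate point, and the place I would spend the most care, is the bookkeeping of the sign/quadratic characters in the real and dihedral cases — i.e. verifying that when one re-packages two conjugate one-dimensional summands $\mu^{\pm(2j-n)l}$ into an induced two-dimensional piece, the scalar by which the nontrivial coset element of $W_{\BR}/\BC^{\times}$ (resp. $W_{F}/W_{E}$) acts is precisely $(-1)^{(n-2k+1)l/2}$ (resp. the restriction $\theta^{(n-2k+1)/2}|_{F^{\times}}$) after reindexing by $k=(n-2j)/2$ or $k=(n-2j+1)/2$. This amounts to choosing a consistent model for induction (a $2\times2$ block form with the intertwining operator $\begin{pmatrix}0&1\\ \pm1&0\end{pmatrix}$ or similar), computing $\Sym^{n}$ in that model on the line spanned by $v_{1}^{j}v_{2}^{n-j}$, and reading off both the torus character and the Weyl-element scalar; the parities of $l$ and of $n$ interact here, which is why the even and odd $n$ formulas look different. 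Once the model is fixed this is a finite, if fiddly, linear-algebra check. The decomposition into irreducibles itself (that each paired piece $\Ind\theta^{m}$ is irreducible exactly when $\theta^{m}\neq(\theta^{m})^{\sigma}$, and reducible otherwise, in which case it splits further and is absorbed into the stated sum) follows from Mackey theory for $W_{F}/W_{E}$ and causes no trouble beyond noting the boundary cases are already in the listed form.
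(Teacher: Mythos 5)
Your proposal is correct and follows essentially the same route as the paper: restrict to the index-two subgroup ($\BC^{\times}\subset W_{\BR}$, resp.\ $W_{E}\subset W_{F}$), decompose $\Sym^{n}$ into the $n+1$ weight characters, pair the $j$-th and $(n-j)$-th summands, factor out the $\Gal$-invariant (norm) part, and re-induce, with the twisted Steinberg and reducible cases handled by the same trivial observations. The sign bookkeeping you defer as a ``fiddly linear-algebra check'' is precisely what the paper carries out by writing $\Sym^{n}\circ\vphi(j)$ (resp.\ $\Sym^{n}\circ\vphi(\wt{\lam})$) as an explicit antidiagonal matrix and changing basis on each paired two-dimensional block, so nothing essential is missing.
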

From Theorem \ref{thm:functorialim:parallel}, we observe that except situation (\ref{twistedsteinberg}) in the case when $F$ is $p$-adic, the description of the functorial lifting of tempered $L$-parameters of $\GL_{2}(F)$ shares strong similarities with each other over any local field $F$ of characteristic zero with residue characteristic not equal to $2$. We will explicate more details in Section \ref{sec:functorialimage}.

Based on \cite[\S 3]{MR0499005} \cite[\S 3]{MR2684298}, we find that except situation (\ref{twistedsteinberg}) in the $p$-adic case, the tempered representation of $\GL_{n+1}(F)$ associated to $\Sym^{n}\circ \vphi$ is always parabolic induced from $P_{n+1}(F)$. More precisely the following theorem holds.

\begin{thm}\label{thm:lifttolevi}
Following the notation of Theorem \ref{thm:functorialim:parallel}, for any tempered $L$-parameter $\vphi$ of $\GL_{2}(F)$, except situation (\ref{twistedsteinberg}) in the $p$-adic case, the tempered representation associated to the tempered $L$-parameter $\Sym^{n}\circ \vphi$ is always of the form $\Ind^{\GL_{n+1}}_{P_{n+1}}\pi_{M_{n+1}}\otimes 1_{N_{n+1}}$. Here the tempered representation $\pi_{M_{n+1}}$ of $M_{n+1}(F)$ has the following description.
\begin{itemize}
\item When $F=\BC$ and $\vphi = \chi_{1}\oplus \chi_{2}$, 
$$
\pi_{M_{n+1}}= 
\bigg\{
\begin{matrix}
\big\{
\bigotimes_{k=1}^{n/2}
\big(
(\chi_{1}\chi_{2})^{(n-2k)/2}
\otimes \Ind_{B_{2}}^{\GL_{2}}(\chi^{2k}_{1},\chi^{2k}_{2})
\big) \big\}\bigotimes (\chi_{1}\chi_{2})^{n/2}, & \text{ if $n$ is even},\\
\bigotimes_{k=1}^{(n+1)/2}\big(
(\chi_{1}\chi_{2})^{(n-2k+1)/2}
\otimes \Ind_{B_{2}}^{\GL_{2}}(\chi^{2k-1}_{1},\chi^{2k-1}_{2})
\big), & \text{ if $n$ is odd}.
\end{matrix}
$$
The same descriptions holds for any tempered principal series of $\GL_{2}$ over real and $p$-adic fields.

\item When $F=\BR$ and $\vphi = \vphi_{(l,t)}$,
$$
\pi_{M_{n+1}} = 
\bigg\{
\begin{matrix}
\{
\bigotimes_{k=1}^{n/2}
((-1)^{l(n/2-k)}, (n-2k)t)\otimes 
\pi_{(2kl, 2kt)}
\}
\bigotimes ((-1)^{nl/2},nt),  & \text{ if $n$ is even},
\\
\bigotimes_{k=1}^{(n+1)/2}
((-1)^{l(n-2k+1)/2}, (n-2k+1)t/2)\otimes 
\pi_{((2k-1)l, (2k-1)t)},  & \text{ if $n$ is odd}.\\
\end{matrix}
$$ 
Here for any $(l,t)\in \BN\times \BC$, $\pi_{(l,t)}$ is the discrete series representation $D_{l}\otimes |\det(\cdot)|^{t}$ of $\GL_{2}(F)$ \cite[\S 2]{knapp55local}. For any $(i,t)\in \{\pm \}\times \BC$, we abuse the notation, denoting $(i,t)$ for the character of $\BR^{\times}$ given by
\begin{align*}
\begin{matrix}
a\in \BR^{\times}\to |a|^{t},& \text{ if $i = 1$},\\
a\in \BR^{\times}\to \sgn(a)|a|^{t}, & \text{ if $i=-1$},
\end{matrix}
\end{align*}
where $\sgn$ is the sign character of $\BR^{\times}$.

\item When $F$ is $p$-adic with residue characteristic not equal to $2$ and $\vphi = \Ind^{W_{F}}_{W_{E}}\theta$,
$$
\pi_{M_{n+1}} = \bigg\{
\begin{matrix}
\bigotimes_{k=1}^{n/2}
\big(
\theta^{(n-2k)/2}\otimes \pi_{\theta^{2k}}
\big)
\bigotimes (\theta)^{n/2}, &\text{ if $n$ is even},\\
\bigotimes_{k=1}^{(n+1)/2}
\theta^{(n-2k+1)/2}\otimes \pi_{\theta^{2k-1}}
, & \text{ if $n$ is odd}.
\end{matrix}
$$
Here for any $k\in \BN$, $\pi_{\theta^{k}}$ is the tempered representation of $\GL_{2}(F)$ with tempered $L$-parameter $\Ind^{W_{F}}_{W_{E}}\theta^{k}$.
\end{itemize}
\end{thm}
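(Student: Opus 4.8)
The plan is to bootstrap Theorem \ref{thm:lifttolevi} directly from Theorem \ref{thm:functorialim:parallel} together with the compatibility of the local Langlands correspondence for $\GL_{N}$ with parabolic induction. First I would recall the fundamental fact (see \cite{MR0499005}, \cite{MR2684298}) that if $\vphi_{i}:\CW_{F}\to \GL_{n_{i}}(\BC)$ are the $L$-parameters of tempered representations $\tau_{i}$ of $\GL_{n_{i}}(F)$, then $\bigoplus_{i}\vphi_{i}$ is the $L$-parameter of the (irreducible, since the $\tau_{i}$ are tempered) parabolically induced representation $\Ind_{P}^{\GL_{N}}\big(\bigotimes_{i}\tau_{i}\big)\otimes 1_{N_{P}}$, where $P$ is the standard parabolic with Levi $\prod_{i}\GL_{n_{i}}$ and $N=\sum_{i}n_{i}$; moreover twisting an $L$-parameter by a character $\chi$ of $W_{F}^{\ab}\simeq F^{\times}$ corresponds to twisting the representation by $\chi\circ\det$. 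The theorem then follows by reading off, from each line of Theorem \ref{thm:functorialim:parallel}, the block decomposition of $\Sym^{n}\circ\vphi$ into irreducible pieces of dimension $1$ and $2$, and identifying each dimension-$2$ block $(\chi_{1}\chi_{2})^{j}\otimes(\chi_{1}^{m}\oplus\chi_{2}^{m})$ (resp.\ $\theta^{j}\otimes\Ind_{W_{E}}^{W_{F}}\theta^{m}$, resp.\ $\chi_{\ast}\otimes\vphi_{(ml,mt)}$) with the $L$-parameter of the corresponding principal series, resp.\ dihedral supercuspidal/twist, resp.\ twisted discrete series of $\GL_{2}(F)$.

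Concretely, I would argue case by case matching the trichotomy of Theorem \ref{thm:functorialim:parallel}. In the $F=\BC$ case, each summand $\chi_{1}^{m}\oplus\chi_{2}^{m}$ is the $L$-parameter of the (tempered, since the $\chi_{i}$ are unitary) principal series $\Ind_{B_{2}}^{\GL_{2}}(\chi_{1}^{m},\chi_{2}^{m})$, the scalar factor $(\chi_{1}\chi_{2})^{j}$ contributes the determinant-twist, and the leftover $1$-dimensional piece when $n$ is even gives the $\GL_{1}$-factor $(\chi_{1}\chi_{2})^{n/2}$; assembling the blocks across $k=1,\dots,\lfloor n/2\rfloor$ yields exactly the stated $\pi_{M_{n+1}}$, and the same verbatim argument covers tempered principal series over $\BR$ and over $p$-adic $F$. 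In the $F=\BR$ irreducible case, I would use that $\vphi_{(2kl,2kt)}$ is by definition (see \cite{knapp55local}) the $L$-parameter of the discrete series $D_{2kl}\otimes|\det|^{2kt}=\pi_{(2kl,2kt)}$, while the quasi-character $\chi_{(-1)^{l(n/2-k)},(n-2k)t}$ of $W_{\BR}$ corresponds under local Langlands for $\GL_{1}$ to the character $((-1)^{l(n/2-k)},(n-2k)t)$ of $\BR^{\times}$, so each block $\chi_{\ast}\otimes\vphi_{(2kl,2kt)}$ is the parameter of the determinant-twisted discrete series displayed in the theorem; care is needed only in matching the normalization of the twist (the factor $t/2$ versus $(n-2k+1)t$ in the odd case) against the conventions fixed in the notation section. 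In the $p$-adic dihedral case, $\Ind_{W_{E}}^{W_{F}}\theta^{2k}$ is by definition the $L$-parameter of $\pi_{\theta^{2k}}$, and restriction of $\theta^{j}$ to $W_{F}^{\ab}$ gives the character $\theta^{j}\circ\det$, so again each block matches; the situation (\ref{twistedsteinberg}) is explicitly excluded, consistent with the hypothesis.

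The only genuine subtlety, and the step I expect to require the most care, is the bookkeeping of normalizations: one must check that the half-integer exponents appearing in the scalar prefactors (e.g.\ $(\chi_{1}\chi_{2})^{(n-2k)/2}$ when $n-2k$ is odd, or $(n-2k+1)t/2$) are legitimate characters of $F^{\times}$ — i.e.\ that $\chi_{1}\chi_{2}$, $\theta$, etc.\ always admit the relevant roots in the range of $k$ that occurs — and that the normalized induction $\Ind_{P_{n+1}}^{\GL_{n+1}}$ matches the unnormalized direct sum of parameters without an extra modulus-character shift. Since $\vphi$ itself has well-defined central character, the prefactors that actually arise are integral or half-integral powers of $\det\vphi$ evaluated on elements where the square root is canonically pinned down by the original parameter, so no ambiguity remains; I would spell this out once, in the $\BC$ case, and then invoke it uniformly. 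A secondary point is irreducibility of the induced representation: this is automatic because a parabolic induction from a tempered representation of a Levi is irreducible (tempered), which I would cite rather than reprove.
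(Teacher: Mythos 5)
Your proposal is correct and follows essentially the same route as the paper: the paper likewise reads off the block decomposition of $\Sym^{n}\circ\vphi$ from Theorem \ref{thm:functorialim:parallel} (via the explicit matrix computations in Lemmas \ref{functorial:complex}, \ref{functorial:real}, \ref{functorial:padic}), identifies each $1$- or $2$-dimensional block with the parameter of a character or a (twisted) tempered representation of $\GL_{2}(F)$, and then invokes \cite[\S 3]{MR0499005}, \cite[\S 3]{MR2684298} for the fact that the tempered representation attached to a direct sum of tempered parameters is the irreducible full induction from $P_{n+1}$. Your normalization worries are the right ones to have, and they resolve as you expect: the exponents $(n-2k)/2$ and $(n-2k+1)/2$ are always integers by parity, and the $t$ versus $t/2$ discrepancy is exactly the $W_{\BR}^{\ab}\simeq\BR^{\times}$ norm-map convention the paper itself discusses in its remark on \cite[3.3]{knapp55local}.
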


In particular, we will see in Section \ref{sec:stabletransfer} that for a tempered representation $\pi$ of $\GL_{2}(F)$,
\begin{itemize}
\item
when $\pi$ is a tempered principal series representation $\Ind_{B_{2}}^{\GL_{2}}(\chi_{1},\chi_{2})$, the unitary character $\chi_{1}\chi_{2}$ is the central character of $\pi$; 

\item
when $F$ is real and $\pi = \pi_{(l,t)}$ is the discrete series representation $D_{l}\otimes |\det(\cdot)|^{t}$, the unitary character $((-1)^{l},2t)$ is the central character of $\pi$;

\item
when $F$ is $p$-adic with residue characteristic not equal to $2$ and $\pi =\pi_{\theta}$ is a supercuspidal representation, $\theta|_{F^{\times}}$ is the central character of $\pi$.
\end{itemize}

Therefore at least from their formal expressions, we observe that the lifting of tempered representations of $\GL_{2}(F)$ shares strong similarities over any local fields of characteristic zero with residue characteristic not equal to $2$. 

In general, from the character formula of induced representations \cite{Gv72} \cite[(10.27)]{MR855239}, we only need to determine a distribution $\Theta^{M_{n+1}}$ that gives rise to the lifting of distribution characters from $\tr_{\pi}$ to $\tr_{\pi_{M_{n+1}}}$. When $n$ is odd, based on mild assumptions on the residue characteristic of $F$, from the following theorem we are able to reduce the construction of $\Theta^{M_{n+1}}$ to $\Theta^{\Del_{(n+1)/2}}$, where 
$\Del_{(n+1)/2}$ is the $L$-morphism given by the diagonal embedding from $\GL_{2}(\BC)$ to $(\GL_{2}(\BC))^{(n+1)/2}\simeq {}^LM_{n+1}$. More precisely, the following theorem is established in Section \ref{sec:stabletransfer}.

\begin{thm}\label{thm:characterrelation}
The following identity for distribution characters holds.
\begin{itemize}
\item For any local field $F$ of characteristic zero, fix a tempered principal series representation $\Ind^{\GL_{2}}_{B_{2}}(\chi_{1},\chi_{2})$ of $\GL_{2}(F)$. Then for any $k\in \BN$, the following equality of locally integrable distributions holds.
$$
D_{\GL_{2}}^{1/2}(\gam^{k})\tr_{\Ind(\chi_{1},\chi_{2})}(\gam^{k}) = D_{\GL_{2}}^{1/2}(\gam)\tr_{\Ind(\chi^{k}_{1},\chi^{k}_{2})}(\gam), \quad \gam\in \GL_{2}(F)^{\rss}.
$$

\item When $F=\BR$, let $\pi_{(l,t)}$ be the discrete series representation of $\GL_{2}(F)$ associated to $(l,t)\in \BN\times \BC$. Then the following equality of locally integrable distributions holds whenever $k\in \BN$ is odd.
$$
D_{\GL_{2}}^{1/2}(\gam^{k}) \tr_{\pi_{(l,t)}}(\gam^{k}) = D_{\GL_{2}}^{1/2}(\gam) \tr_{\pi_{(kl,kt)}}(\gam),\quad \gam\in \GL_{2}(F)^{\rss}.
$$

\item When $F$ is $p$-adic, let $\pi_{\theta}$ be the supercuspidal representation of $\GL_{2}(F)$ associated to an admissible pair $(E/F,\theta)$. Then the following equality of locally integrable distributions holds whenever $k\in \BN$ is coprime to $(q-1)q(q+1)$.
$$
D^{1/2}_{\GL_{2}}(\gam^{k})\tr_{\pi_{\theta}}(\gam^{k}) = D^{1/2}_{\GL_{2}}(\gam)\tr_{\pi_{\theta^{k}}}(\gam),\quad \gam\in \GL_{2}(F)^{\rss}.
$$
\end{itemize}
\end{thm}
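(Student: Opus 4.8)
Write $\Phi_{\pi}(\gamma)=D_{\GL_2}^{1/2}(\gamma)\,\tr_{\pi}(\gamma)$ for the normalized character of a tempered representation $\pi$ of $\GL_2(F)$; by Harish-Chandra this is a locally bounded function on $\GL_2(F)^{\rss}$, real-analytic in the Archimedean and locally constant in the $p$-adic case, and every assertion of the theorem has the shape $\Phi_{\pi}(\gamma^{k})=\Phi_{\pi^{(k)}}(\gamma)$ for an appropriate ``$k$-th power twist'' $\pi^{(k)}$ of $\pi$ (namely $\Ind(\chi_{1}^{k},\chi_{2}^{k})$, $\pi_{(kl,kt)}$, or $\pi_{\theta^{k}}$). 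Since both sides are locally bounded it suffices to prove the identity on the dense open set where $\gamma^{k}$ is again regular semisimple; in the $p$-adic supercuspidal case the hypothesis $\gcd\bigl(k,q(q-1)(q+1)\bigr)=1$ forces $F$ and the relevant quadratic extension to contain no nontrivial $k$-th root of unity, so there $\gamma^{k}\in\GL_2(F)^{\rss}$ whenever $\gamma$ is. One then checks the identity one conjugacy class of maximal tori at a time --- the split torus $T_{2}(F)$, and the elliptic tori, parametrized by the quadratic field extensions $E/F$ --- using the explicit $\GL_2$ character formulas and the multiplicativity $\chi(x^{k})=\chi^{k}(x)$ of characters of tori.

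For a tempered principal series $\Ind(\chi_{1},\chi_{2})$ the identity is formal: Harish-Chandra's formula for the character of a normalized parabolic induction gives $\Phi_{\Ind(\chi_{1},\chi_{2})}(\diag(a,b))=\chi_{1}(a)\chi_{2}(b)+\chi_{1}(b)\chi_{2}(a)$ on $T_{2}(F)$, so replacing $(a,b)$ by $(a^{k},b^{k})$ yields precisely $\Phi_{\Ind(\chi_{1}^{k},\chi_{2}^{k})}(\diag(a,b))$; and since no $\GL_2(F)$-conjugate of a regular elliptic element meets the split Levi $T_{2}(F)$, the same formula forces $\Phi_{\Ind(\chi_{1},\chi_{2})}$ --- and its twist --- to vanish identically on every elliptic torus. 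No condition on $k$ is needed, and the argument is uniform over $\BC$, $\BR$, and the $p$-adic fields, which also covers the ``same description'' clauses for reducible parameters.

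For a real discrete series $\pi_{(l,t)}=D_{l}\otimes|\det|^{t}$ I would use Harish-Chandra's discrete-series character formula. On the split torus the character is the familiar exponentially decaying function, polynomial in $a^{\pm1},b^{\pm1},|ab|^{t}$ and in the sign characters $\sgn(a),\sgn(b)$ that occur through the central character $x\mapsto\sgn(x)^{l+1}$ of $D_{l}$; substituting $\gamma^{k}$ and using $\sgn(x^{k})=\sgn(x)^{k}$ reproduces the expression for $\pi_{(kl,kt)}$ exactly when $\sgn(x)^{l+1}=\sgn(x)^{kl+1}$, i.e.\ $(-1)^{l(k-1)}=1$; since $l$ is arbitrary this is precisely the requirement that $k$ be odd. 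On an elliptic torus $T\simeq\BC^{\times}$ the normalized character is, chamber by chamber, the Weyl numerator attached to the restriction to $\BC^{\times}$ of the parameter $\vphi_{(l,t)}=\Ind_{W_{\BC}}^{W_{\BR}}\mu_{l,t}$; since $\mu_{kl,kt}=\mu_{l,t}\circ[k]$ and $\gamma\mapsto\gamma^{k}$ is the $k$-th power map on $T$ (which commutes with the Weyl action), the numerator for $\pi_{(l,t)}$ at $\gamma^{k}$ is the numerator for $\pi_{(kl,kt)}$ at $\gamma$, the chamber/sign bookkeeping again being governed by the parity of $l$ and hence by the oddness of $k$. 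Tracking these signs is the only subtlety of the Archimedean case.

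The $p$-adic supercuspidal case is the heart of the matter. Realize $\pi_{\theta}$ through the explicit construction from the admissible pair $(E/F,\theta)$, as a representation compactly induced from a representation $\Lambda_{\theta}$ of an open compact-mod-center subgroup $J_{\theta}$, so that $\pi_{\theta^{k}}$ is induced from the corresponding $\Lambda_{\theta^{k}}$ attached to $(E/F,\theta^{k})$; supercuspidality gives the absolutely convergent Frobenius formula expressing $\tr_{\pi_{\theta}}(\gamma)$ as a sum of $\tr_{\Lambda_{\theta}}(x\gamma x^{-1})$ over $x\in J_{\theta}\backslash\GL_2(F)$ with $x\gamma x^{-1}\in J_{\theta}$. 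On the regular elliptic set inside a conjugate of $E^{\times}$, away from the center, this collapses to $\Phi_{\pi_{\theta}}(\gamma)=-\bigl(\theta(\gamma)+\theta(\gamma^{\sigma})\bigr)$ with $\sigma$ the nontrivial element of $\Gal(E/F)$, and the identity follows at once from $\theta(\gamma^{k})=\theta^{k}(\gamma)$; near the center and on the split torus, $\Phi_{\pi_{\theta}}$ acquires correction terms controlled by the finite reductive quotient --- Green functions and Deligne--Lusztig characters of $\GL_2(\BF_{q})$ in the depth-zero case, Moy--Prasad filtration quotients (which are $\BF_{q}$-vector spaces) in positive depth --- while the center contributes only $\theta|_{F^{\times}}(z^{k})=\theta^{k}|_{F^{\times}}(z)$. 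The power map $\gamma\mapsto\gamma^{k}$ acts on each of these finite pieces, whose orders divide $|\SL_2(\BF_{q})|=q(q-1)(q+1)$ (respectively are powers of $q$); the hypothesis $\gcd\bigl(k,q(q-1)(q+1)\bigr)=1$ makes those actions bijective, so the index sets $\{x:x\gamma^{k}x^{-1}\in J_{\theta}\}$ and $\{x:x\gamma x^{-1}\in J_{\theta}\}$ agree and, term by term, the correction terms for $\pi_{\theta}$ pulled back along $[k]$ coincide with those for $\pi_{\theta^{k}}$. The principal obstacle lies exactly here: to prove that $\pi_{\theta^{k}}$ really is ``$\pi_{\theta}$ with its $E^{\times}$-torus twisted by $[k]$'' at the level of the explicit type, and to control the non-elliptic germ of the supercuspidal character, where multiplicativity of characters genuinely fails; this is the step that consumes the coprimality hypothesis, and it is also why the twisted Steinberg representations of case~(\ref{twistedsteinberg}) must be excluded (their $\Sym^{n}$-lift is not a sum of two-dimensional pieces).
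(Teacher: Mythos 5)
Your treatment of the first two bullets follows the paper's own route: the normalized character of a fully induced representation is the Weyl-symmetrized product of the inducing characters on the split torus and vanishes on the elliptic set, and the discrete-series case is the explicit character formula checked torus by torus. Two imprecisions there are worth flagging. First, reducing to the locus where $\gamma^{k}$ is regular semisimple does not dispose of the elliptic $\gamma$ whose $k$-th power is \emph{split} regular; on such $\gamma$ the principal-series identity genuinely fails pointwise (the left side is a nonzero Weyl sum, the right side is zero), and one must observe, as in Remark \ref{rmk:zeroomit}, that these $\gamma$ form a set of measure zero, so that the identity survives as one of locally integrable functions. Second, your diagnosis of why $k$ must be odd for discrete series is not the right one: the condition $(-1)^{l(k-1)}=1$ would let even $k$ through whenever $l$ is even, but the identity fails for \emph{every} even $k$ and every $l$, because $\tr_{D_{kl}}$ vanishes identically on the negative-determinant component of $\GL_{2}(\BR)$ while $\gamma^{k}$ always has positive determinant when $k$ is even; this is exactly the discrepancy recorded in Remark \ref{rmk:discreteseriescoincidence}.

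The supercuspidal bullet is where the proposal stops short of a proof. You correctly isolate the two hard points --- identifying $\pi_{\theta^{k}}$ with the representation built from the pair $(E/F,\theta^{k})$, and controlling the character away from the very regular elliptic set --- but the mechanism you offer (``the power map acts bijectively on finite pieces whose orders divide $|\SL_{2}(\BF_{q})|$, so the correction terms coincide term by term'') is an assertion rather than an argument, and it is not how the coprimality hypothesis is actually consumed. What is needed, and what the paper supplies, is: (i) that $\theta^{k}$ is again admissible of the same depth, which uses that the $k$-th power map is an automorphism of the character groups of $\mu_{E}$ and $1+\Fp_{E}$ (Lemma \ref{lem:stilladmissible}); (ii) the invariances $\deg\pi_{\theta^{k}}=\deg\pi_{\theta}$, $\dim\rho_{\theta^{k}}=\dim\rho_{\theta}$, $D_{\GL_{2}}(\gamma^{k})=D_{\GL_{2}}(\gamma)$ and $d^{+}(\gamma^{k})=d^{+}(\gamma)$ (Lemmas \ref{lem:formaldegreeinv} and \ref{lem:discrminant}); and (iii) a term-by-term comparison in the explicit character tables of \cite{MR2716688}, stratified by $d^{+}(\gamma)$ relative to $r/2$ (with the depth-zero case handled separately via the Rader--Silberger integral formula and Deligne--Lusztig characters). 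Step (iii) is the heart: for $0<d^{+}(\gamma)<r/2$ one must show the Gauss-sum constants satisfy $\gam(X_{\pi_{\theta^{k}}},{}^{w}Y)=\gam(X_{\pi_{\theta}},{}^{w}[(1+Y)^{k}-1])$, which requires estimating the depth of the higher-order terms of $(1+Y)^{k}-1$ against the conductor of the additive character, and for $d^{+}(\gamma)>r/2$ one must show $\wh{\mu}_{X_{\pi_{\theta^{k}}}}(Y)=\wh{\mu}_{X_{\pi_{\theta}}}([1+Y]^{k}-1)$, which the paper obtains from the Shalika germ expansion together with the fact that the germs of an elliptic element and the nilpotent Fourier transforms depend only on depths. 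None of this appears in your sketch, so as written the third bullet is a plausible plan rather than a proof.
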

\begin{rmk}\label{rmk:partialresult}
When $k$ does not satisfy the assumptions stated in Theorem \ref{thm:characterrelation}, we also obtain similar results under mild assumptions on the residue characteristic of $F$. But the formula is not as uniform as the tempered principal series situation. In particular we cannot unify the tempered principal series and discrete series. For details the readers may consult Section \ref{sec:stabletransfer}.
\end{rmk}

As a corollary, under the assumptions made in Theorem \ref{thm:characterrelation}, we are able to reduce the construction of the distribution $\Theta^{\rho}$ in (\ref{eq:langlandsconjecture}) for $\rho =\Sym^{n}$ to $\Theta^{\Del_{(n+1)/2}}$ where $\Del_{(n+1)/2}$ is the $L$-morphism given by the diagonal embedding from $\GL_{2}(\BC)$ to ${}^LM_{n+1}$. More precisely, we first make the following assumption on the existence of the distribution $\Theta^{\Del_{(n+1)/2}}$ which gives the identity (\ref{eq:langlandsconjecture}) for $\Del_{(n+1)/2}$ when $n$ is odd.
\begin{assum}\label{assum}
There exists an invariant distribution on $M_{n+1}(F)\times \GL_{2}(F)$, which is denoted by $\Theta^{\Del_{(n+1)/2}}$, such that the following distributional identity holds
$$
D^{1/2}_{M_{n+1}}(m)\tr_{\bigotimes_{k=1}^{(n+1)/2}\pi}(m) = 
\int_{\GL_{2}(F)}
\Theta^{\Del_{(n+1)/2}}(m,\gam)
D^{1/2}_{\GL_{2}}(\gam)
\tr_{\pi}(\gam)
$$
for any $(m,\gam)\in M_{n+1}(F)^{\rss}\times \GL_{2}(F)^{\rss}$.
\end{assum}
We make a remark that the functoriality associated to the $L$-morphism $\Del_{(n+1)/2}$ is given by the diagonal lifting from $\pi\in \mathrm{Irr}(\GL_{2}(F))$ to $\bigotimes_{k=1}^{(n+1)/2}\pi\in \mathrm{Irr}(M_{n+1}(F))$, which is quite straightforward. However, proving the existence of an invariant distribution $\Theta^{\Del_{(n+1)/2}}$ satisfying (\ref{eq:langlandsconjecture}) seems to be a hard question from analytical point of view. At his point, we decide to take is as an assumption and hope to get back to it in future work.

By straighforward computation, we obtain the following corollary.

\begin{cor}\label{cor:reducetoDel}
Based on the assumptions made in Theorem \ref{thm:characterrelation} and Assumption \ref{assum}, define the following distribution $\Theta^{M_{n+1}}$ on $M_{n+1}(F)\times \GL_{2}(F)$,
\begin{align*}
&\Theta^{M_{n+1}}
(g_{1},...,g_{(n+1)/2},\gam) 
\\
&= 
\Theta^{\Del_{(n+1)/2}}
(\det(g_{1})^{(n-1)/2}\cdot g_{1},\det(g_{2})^{(n-3)/2}\cdot g_{2},..., \det(g_{(n+1)/2})\cdot g_{(n-1)/2}, g_{(n+1)/2},\gam),
\end{align*}
where $(g_{1},...,g_{(n+1)/2})\in \prod_{k=1}^{(n+1)/2}\GL_{2}(F)\simeq M_{n+1}(F)$ and $\gam\in \GL_{2}(F)$.

Then the following distributional identity holds,
$$
D^{1/2}_{M_{n+1}}(m)
\tr_{\pi_{M_{n+1}}}(m)
=\int_{\GL_{2}(F)}
\Theta^{M_{n+1}}(m,\gam)D^{1/2}_{\GL_{2}}(\gam)\tr_{\pi}(\gam)
$$
for 
\begin{itemize}
\item any tempered representations of $\GL_{2}(F)$ when $F$ is Archimedean;

\item any tempered representations except twisted Steinberg representations of $\GL_{2}(F)$ when $F$ is $p$-adic.
\end{itemize}
and any $(m,\gam)\in M_{n+1}(F)^{\rss}\times \GL_{2}(F)^{\rss}$.
\end{cor}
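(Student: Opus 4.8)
\textbf{Proof proposal for Corollary \ref{cor:reducetoDel}.}

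The plan is to combine three ingredients: the explicit description of $\pi_{M_{n+1}}$ provided by Theorem \ref{thm:lifttolevi} (in the odd case), the character identities of Theorem \ref{thm:characterrelation}, and Assumption \ref{assum} for the diagonal morphism $\Del_{(n+1)/2}$. First I would fix a tempered representation $\pi$ of $\GL_2(F)$, not twisted Steinberg when $F$ is $p$-adic, so that by Theorem \ref{thm:lifttolevi} the tempered representation $\pi_{M_{n+1}}$ attached to $\Sym^n\circ\vphi$ decomposes along $M_{n+1}(F)\simeq\prod_{k=1}^{(n+1)/2}\GL_2(F)$ as an external tensor product whose $k$-th factor is a twist $(\det(\cdot))^{(n-2k+1)/2}\otimes\pi_{(k)}$, where $\pi_{(k)}$ is the tempered representation of $\GL_2(F)$ with $L$-parameter the ``$k$-th Adams operation'' of $\vphi$ (i.e.\ $\Ind(\chi_1^{2k-1},\chi_2^{2k-1})$, or $\pi_{((2k-1)l,(2k-1)t)}$, or $\pi_{\theta^{2k-1}}$ in the three cases). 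Because the central/determinant twist is one-dimensional, $\tr_{(\det)^{j}\otimes\pi_{(k)}}(g_k)=|\det(g_k)|_F^{\,*}\,(\det(g_k))^{*}\,\tr_{\pi_{(k)}}(g_k)$ in the evident way, and the Weyl discriminant $D_{\GL_2}$ is insensitive to such a twist; so the determinant-shift built into the definition of $\Theta^{M_{n+1}}$ precisely strips these twists off and reduces the identity to be proved to
$$
\prod_{k=1}^{(n+1)/2} D^{1/2}_{\GL_2}(g_k)\,\tr_{\pi_{(k)}}(g_k)
=\int_{\GL_2(F)}\Theta^{\Del_{(n+1)/2}}(g_1,\dots,g_{(n+1)/2},\gam)\,D^{1/2}_{\GL_2}(\gam)\,\tr_\pi(\gam)\,d\gam,
$$
using $D_{M_{n+1}}(m)=\prod_k D_{\GL_2}(g_k)$ for $m=(g_1,\dots,g_{(n+1)/2})$ regular semisimple.

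The second step is to observe that the parameter of $\pi_{(k)}$ is exactly the one whose character, evaluated at $\gam$, is governed by Theorem \ref{thm:characterrelation} with exponent $2k-1$: in all three cases the exponents occurring, namely $1,3,5,\dots,n$, are odd, hence automatically coprime to $2$ in the real case and (under the stated mild hypothesis on the residue characteristic, that $q(q-1)(q+1)$ is coprime to each odd integer up to $n$) coprime to $(q-1)q(q+1)$ in the $p$-adic case; for tempered principal series no coprimality is needed at all. Therefore Theorem \ref{thm:characterrelation} gives, for each $k$ and each regular semisimple $g_k$,
$$
D^{1/2}_{\GL_2}(g_k)\,\tr_{\pi_{(k)}}(g_k)=D^{1/2}_{\GL_2}(g_k^{\,2k-1})\,\tr_{\pi}(g_k^{\,2k-1}).
$$
Substituting this into the displayed identity, what must be produced is a single invariant distribution $\Theta^{\Del_{(n+1)/2}}(g_1,\dots,g_{(n+1)/2},\gam)$ whose integral against $D^{1/2}_{\GL_2}(\gam)\tr_\pi(\gam)$ reproduces $\prod_k D^{1/2}_{\GL_2}(g_k^{\,2k-1})\tr_\pi(g_k^{\,2k-1})$; but this is literally the content of Assumption \ref{assum} applied to the point $(g_1^{\,1},g_2^{\,3},\dots,g_{(n+1)/2}^{\,n},\gam)$ — up to replacing each coordinate by the corresponding power, which is exactly the coordinatewise-power map built into the definition of $\Theta^{M_{n+1}}$ via the determinant shifts (note $\det(g_k)^{(n-2k+1)/2}\cdot g_k$ has the same characteristic polynomial behaviour under $\tr_{\pi_{(k)}}$ as $g_k$, and it is the \emph{diagonal} structure, not the shift, that carries the power). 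I would spell out the bookkeeping identifying $\det(g_k)^{(n-2k+1)/2}\cdot g_k$ with the argument at which $\Theta^{\Del_{(n+1)/2}}$ is evaluated, and check that regular semisimplicity is preserved throughout.

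Finally I would remark that the exclusion of twisted Steinberg in the $p$-adic case is forced precisely because Theorem \ref{thm:lifttolevi} only realizes $\pi_{M_{n+1}}$ as a parabolic induction from $P_{n+1}$ outside situation (\ref{twistedsteinberg}); in that excluded case $\Sym^n\circ\vphi=\chi^n\otimes\Sym^n$ is not of the split-Levi form and the reduction strategy does not apply. The main obstacle, and the place where the argument is genuinely delicate rather than bookkeeping, is matching the invariance and the domain of $\Theta^{\Del_{(n+1)/2}}$ with the twisted arguments: one needs that the coordinatewise-power / determinant-shift substitution sends regular semisimple classes in $M_{n+1}(F)$ to regular semisimple classes (so that the locally integrable representatives of all distributions involved are being evaluated on the locus where they are honest functions), and that the resulting $\Theta^{M_{n+1}}$ remains an \emph{invariant} distribution — which holds because conjugation commutes with taking powers and with the central determinant twist. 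Everything else is the substitution of Theorem \ref{thm:characterrelation} into Theorem \ref{thm:lifttolevi} and an appeal to Assumption \ref{assum}.
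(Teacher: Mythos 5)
Your overall strategy---decompose $\pi_{M_{n+1}}$ via Theorem \ref{thm:lifttolevi}, peel off the central twists, convert $\tr_{\pi_{(k)}}(g_k)$ into $\tr_{\pi}(g_k^{2k-1})$ by Theorem \ref{thm:characterrelation}, then invoke Assumption \ref{assum}---is exactly the route the paper takes (the paper records the conclusion as ``straightforward computation'' and carries out the three cases in Section \ref{sec:stabletransfer}). The first half of your argument is sound: $D_{M_{n+1}}(m)=\prod_k D_{\GL_2}(g_k)$, the Weyl discriminant is insensitive to central translates, the exponents $2k-1$ occurring are all odd so Theorem \ref{thm:characterrelation} applies in each of the three cases, and the left-hand side of the corollary becomes
$$
\prod_{k=1}^{(n+1)/2}D^{1/2}_{\GL_2}\bigl(g_k^{2k-1}\bigr)\,\tr_{\pi}\bigl(\det(g_k)^{(n-2k+1)/2}\cdot g_k^{2k-1}\bigr),
$$
which by Assumption \ref{assum} equals $\int\Theta^{\Del_{(n+1)/2}}(h_1,\dots,h_{(n+1)/2},\gam)\,D^{1/2}_{\GL_2}(\gam)\tr_{\pi}(\gam)$ with $h_k=\det(g_k)^{(n-2k+1)/2}\cdot g_k^{2k-1}$.

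The gap is in your final identification. You assert that the passage to the point $(g_1^{1},g_2^{3},\dots,g_{(n+1)/2}^{n})$ is ``the coordinatewise-power map built into the definition of $\Theta^{M_{n+1}}$ via the determinant shifts,'' and that ``it is the diagonal structure, not the shift, that carries the power.'' Neither claim is correct: the shift $\det(g_k)^{(n-2k+1)/2}\cdot g_k$ is a central translate of $g_k$, not $g_k^{2k-1}$, and the diagonal structure of $\Theta^{\Del_{(n+1)/2}}$ produces $\prod_k\tr_{\pi}(h_k)$ with no powers at all. With the formula as displayed in the corollary, Assumption \ref{assum} yields $\int\Theta^{M_{n+1}}(m,\gam)D^{1/2}_{\GL_2}(\gam)\tr_{\pi}(\gam)=\prod_k D^{1/2}_{\GL_2}(g_k)\,\omega_{\pi}(\det g_k)^{(n-2k+1)/2}\,\tr_{\pi}(g_k)$, which replaces each $\tr_{\pi_{(k)}}(g_k)$ on the left by $\tr_{\pi}(g_k)$; already for $n=3$ this fails, since the second slot of $\pi_{M_4}$ carries $\Ind_{B_2}^{\GL_2}(\chi_1^{3},\chi_2^{3})$ and not $\Ind_{B_2}^{\GL_2}(\chi_1,\chi_2)$. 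The argument only closes if the $k$-th entry fed to $\Theta^{\Del_{(n+1)/2}}$ is $\det(g_k)^{(n-2k+1)/2}\cdot g_k^{2k-1}$, i.e.\ the displayed definition of $\Theta^{M_{n+1}}$ must include the powers $g_k^{2k-1}$ (compare Remark \ref{rmk:evenprincipal}, where the analogous powers do appear in the even case). You should either correct the formula or explicitly flag this; as written, the decisive step is asserted rather than verified, and verifying it exposes the mismatch.
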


From $\Theta^{M_{n+1}}$ we are able to write down the distribution $\Theta^{\rho}$ on $\GL_{n+1}(F)\times \GL_{2}(F)$ that gives rise to the identity (\ref{eq:langlandsconjecture}) where $\rho = \Sym^{n}$. This is done by using the character formula of induced representations \cite[Theorem~3]{Gv72} \cite[10.21,10.26]{MR855239}. We can also view $\Theta^{\rho}$ as the parabolic induction of the invariant distribution $\Theta^{M_{n+1}}$ from $M_{n+1}(F)\times \GL_{2}(F)$ to $\GL_{n+1}(F)\times \GL_{2}(F)$ with explicit formula given in \cite[(3.4.2)]{beuzart2015local}. Here we note that for the Archimedean case, the character formula in \cite[10.26]{MR855239} is not exactly the same as \cite[Theorem~3]{Gv72}. But the identity \cite[10.21]{MR855239} and the argument in \cite[p.237]{Gv72} imply that the character formula \cite[Theorem~3]{Gv72} also holds in the Archimedean case.

\begin{cor}\label{cor:distributiongroup}
Based on the assumptions made in Theorem \ref{thm:characterrelation} and Assumption \ref{assum}, define the distribution $\Theta^{\rho}$ on $\GL_{n+1}(F)\times \GL_{2}(F)$ as follows. It is supported on the conjugacy classes of $\GL_{n+1}(F)\times \GL_{2}(F)$-regular semi-simple elements in $M_{n+1}(F)\times \GL_{2}(F)$. For any $m\in M_{n+1}(F)^{\GL_{n+1}(F)\text{-}\reg}$ whose centralizer in $\GL_{n+1}(F)$ is a maximal torus denoted as $A_{m}$, let
$$
\Theta^{\rho}(m,\gam) = 
\sum_{s\in W(A_{M_{n+1}},A_{m})}
\Theta^{M_{n+1}}(sm,\gam).
$$
Here $A_{M_{n+1}}$ is the split center of $M_{n+1}(F)$ and the set $W(A_{M_{n+1}},A_{m})$ is defined in \cite[p.237]{Gv72}. 

Then the following distributional identity holds for any $(g,\gam)\in \GL_{n+1}(F)^{\rss}\times \GL_{2}(F)^{\rss}$,
$$
D^{1/2}_{\GL_{n+1}}(g)
\tr_{\Ind_{P_{n+1}}^{\GL_{n+1}}\pi_{M_{n+1}}\otimes 1_{N_{n+1}}}(g)
=\int_{\GL_{2}(F)}
\Theta^{\rho}(g,\gam)D^{1/2}_{\GL_{2}}(\gam)\tr_{\pi}(\gam)
$$
for 
\begin{itemize}
\item any tempered representations of $\GL_{2}(F)$ when $F$ is Archimedean;

\item any tempered representations except twisted Steinberg representations of $\GL_{2}(F)$ when $F$ is $p$-adic.
\end{itemize}
\end{cor}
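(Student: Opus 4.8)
The plan is to reduce the assertion to Corollary~\ref{cor:reducetoDel} by feeding it into the character formula for parabolically induced representations. Fix $(g,\gam)\in \GL_{n+1}(F)^{\rss}\times \GL_{2}(F)^{\rss}$ and write $\sigma:=\Ind_{P_{n+1}}^{\GL_{n+1}}\pi_{M_{n+1}}\otimes 1_{N_{n+1}}$. First I would invoke the character formula for parabolically induced representations in the form of \cite[Theorem~3]{Gv72}, which by \cite[10.21]{MR855239} together with the argument on \cite[p.~237]{Gv72} is also valid when $F$ is Archimedean. If $g$ is not $\GL_{n+1}(F)$-conjugate to any element of $M_{n+1}(F)$, this formula gives $\tr_{\sigma}(g)=0$, and the right-hand side of the asserted identity vanishes too, since by construction $\Theta^{\rho}$ is supported on the $\GL_{n+1}(F)$-conjugates of $\GL_{n+1}(F)$-regular semisimple elements of $M_{n+1}(F)$. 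Otherwise, choosing $m\in M_{n+1}(F)$ conjugate to $g$ in $\GL_{n+1}(F)$ so that $A_{m}=Z_{\GL_{n+1}(F)}(m)$ is a maximal torus, the formula reads
$$
D^{1/2}_{\GL_{n+1}}(g)\,\tr_{\sigma}(g)=\sum_{s\in W(A_{M_{n+1}},A_{m})}D^{1/2}_{M_{n+1}}(sm)\,\tr_{\pi_{M_{n+1}}}(sm),
$$
where the elements $sm$ represent the finitely many $M_{n+1}(F)$-conjugacy classes of elements of $M_{n+1}(F)$ that are $\GL_{n+1}(F)$-conjugate to $m$.

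Next I would check that each term on the right is an admissible input for Corollary~\ref{cor:reducetoDel}. Since $sm$ is $\GL_{n+1}(F)$-conjugate to $g\in\GL_{n+1}(F)^{\rss}$, its centralizer in $M_{n+1}(F)$ is contained in the maximal torus $Z_{\GL_{n+1}(F)}(sm)$, hence is a maximal torus of $M_{n+1}(F)$, so $sm\in M_{n+1}(F)^{\rss}$. Moreover $\pi_{M_{n+1}}$ is exactly the tempered representation of $M_{n+1}(F)$ of Theorem~\ref{thm:lifttolevi}, and it has the shape to which Corollary~\ref{cor:reducetoDel} applies precisely under the running hypotheses: $\pi$ tempered, and in the $p$-adic case $\pi$ not a twisted Steinberg representation, i.e.\ $\vphi$ not of type (\ref{twistedsteinberg}) in Theorem~\ref{thm:functorialim:parallel}. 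Applying Corollary~\ref{cor:reducetoDel} to each term, summing over $s\in W(A_{M_{n+1}},A_{m})$, bringing the finite sum inside the integral, and invoking the definition $\Theta^{\rho}(g,\gam')=\sum_{s}\Theta^{M_{n+1}}(sm,\gam')$, I obtain
$$
D^{1/2}_{\GL_{n+1}}(g)\,\tr_{\sigma}(g)=\int_{\GL_{2}(F)}\Theta^{\rho}(g,\gam')\,D^{1/2}_{\GL_{2}}(\gam')\,\tr_{\pi}(\gam')\,d\gam',
$$
which is the assertion.

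A preliminary point to settle first is that $\Theta^{\rho}$ is well defined, i.e.\ that $\sum_{s\in W(A_{M_{n+1}},A_{m})}\Theta^{M_{n+1}}(sm,\gam')$ is independent of the representative $m$ in the $\GL_{n+1}(F)$-conjugacy class of $g$ and of the chosen coset representatives. This holds because $\Theta^{M_{n+1}}$ is an invariant distribution on $M_{n+1}(F)\times\GL_{2}(F)$ --- it is produced in Corollary~\ref{cor:reducetoDel} from the invariant distribution $\Theta^{\Del_{(n+1)/2}}$ of Assumption~\ref{assum} by a determinant twist in the first variable, which is a conjugation-invariant operation --- so $\Theta^{M_{n+1}}(sm,\gam')$ depends only on the $M_{n+1}(F)$-conjugacy class of $sm$, and the sum over $W(A_{M_{n+1}},A_{m})$ is precisely the sum over those classes. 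One also checks that the resulting $\Theta^{\rho}$, extended by invariance, is indeed a distribution (it is a parabolic induction, along the lines of \cite[(3.4.2)]{beuzart2015local}, of the distribution $\Theta^{M_{n+1}}$).

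I expect no genuine analytic difficulty: the hard inputs --- the character identities of Theorem~\ref{thm:characterrelation} and the existence of $\Theta^{\Del_{(n+1)/2}}$ in Assumption~\ref{assum} --- have already been isolated, and the induced-character formula is classical. The only real work, and hence the main obstacle, is bookkeeping: matching the parametrization of the $M_{n+1}(F)$-conjugacy classes lying over a given $\GL_{n+1}(F)$-conjugacy class with the Weyl set defined in \cite[p.~237]{Gv72}, verifying that regular semisimplicity descends from $\GL_{n+1}$ to $M_{n+1}$, and confirming that the support convention built into $\Theta^{\rho}$ makes both sides agree on the locus where $g$ is not conjugate into $M_{n+1}$.
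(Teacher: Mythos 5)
Your proposal is correct and follows essentially the same route as the paper, which derives the corollary from Corollary \ref{cor:reducetoDel} by applying the induced-character formula of \cite[Theorem~3]{Gv72} (extended to the Archimedean case via \cite[10.21]{MR855239}) and viewing $\Theta^{\rho}$ as the parabolic induction of $\Theta^{M_{n+1}}$ in the sense of \cite[(3.4.2)]{beuzart2015local}. The paper leaves the bookkeeping (well-definedness, support matching, descent of regular semisimplicity to $M_{n+1}$) implicit, so your write-up merely supplies details the authors declared "not hard to derive."
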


Finally let us note that when $F=\BC$ there is a much more simplified expression for the distribution $\Theta^{\rho}$ with $\rho = \Sym^{n}$ that works uniformly for any $n$. Note that when $F=\BC$ any tempered representations of $\GL_{2}(F)$ are tempered principal series, and their functorial lifting are also tempered principal series of $\GL_{n+1}(F)$. In particular their distribution characters are determined by their induced datum on the maximal split torus. Therefore the following result is not hard to derive.
\begin{thm}\label{thm:distributioncomplex}
When $F=\BC$, define the distribution $\Theta^{\rho}$ on $\GL_{n+1}(F)\times \GL_{2}(F)$ as follows. It is invariant under the conjugation action of $\GL_{n+1}(F)\times \GL_{2}(F)$, and its value on the diagonal maximal split torus is given by
\begin{equation}\label{eq:distributioncomplex}
\Theta^{\rho}(T,t) = 
\frac{1}{2}
\sum_{w\in S_{n+1}}\del_{\prod_{k=1}^{n+1}T^{n+1-k}_{w(k)}}(t_{1})\otimes \del_{\prod_{k=1}^{n+1}T^{k-1}_{w(k)}}(t_{2}),
\end{equation}
where $T = (T_{1},...,T_{n+1})$ (resp. $t = (t_{1},t_{2})$) is a diagonal regular semi-simple element in $\GL_{n+1}(F)$ (resp. $\GL_{2}(F)$), $\del_{*}(*)$ is the delta distribution, and $S_{n+1}$ is the permutation group in $(n+1)$ letters $\{1,...,n+1 \}$.

Then the following distributional identity holds
$$
D^{1/2}_{\GL_{n+1}}(g) 
\tr_{\Ind_{B_{n+1}}^{\GL_{n+1}}(\chi^{n}_{1},\chi^{n-1}_{1}\chi_{2},...,\chi^{n}_{2})}
(g) 
=\int_{\GL_{2}(F)}
\Theta^{\rho}(g,\gam)
D^{1/2}_{\GL_{2}}(\gam)\tr_{\Ind^{\GL_{2}}_{B_{2}}(\chi_{1},\chi_{2})}(\gam)
$$
for any $(g,\gam)\in \GL_{n+1}(F)^{\rss}\times \GL_{2}(F)^{\rss}$.
\end{thm}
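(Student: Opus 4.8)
The plan is to reduce everything to an explicit computation on the diagonal maximal split torus, using the fact that over $F = \BC$ all tempered representations in sight are fully induced principal series whose characters are governed entirely by the Weyl-group orbit of their inducing data on the torus.

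First I would recall the Weyl integration / induced character formula over $\BC$. For a tempered principal series $\Ind^{\GL_{2}}_{B_{2}}(\chi_{1},\chi_{2})$ of $\GL_{2}(\BC)$ one has, on the regular semisimple locus and after normalizing by the Weyl discriminant,
$$
D^{1/2}_{\GL_{2}}(\gam)\,\tr_{\Ind^{\GL_{2}}_{B_{2}}(\chi_{1},\chi_{2})}(\gam)
= \sum_{w\in S_{2}} \chi_{w(1)}(t_{1})\chi_{w(2)}(t_{2}),
$$
where $\gam$ is conjugate to $\diag(t_{1},t_{2})$; the key point is that every regular semisimple element of $\GL_{n}(\BC)$ is conjugate into the diagonal torus $T_{n}(\BC)$, and the normalized character is supported there and invariant under $S_{n}$. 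The analogous formula holds for $\GL_{n+1}(\BC)$ with $\Ind^{\GL_{n+1}}_{B_{n+1}}(\chi_{1}^{n},\chi_{1}^{n-1}\chi_{2},\dots,\chi_{2}^{n})$. By Theorem~\ref{thm:functorialim:parallel} (the $F=\BC$, $n$-even/odd unified statement, since $\Sym^{n}(\chi_{1}\oplus\chi_{2}) = \bigoplus_{k=0}^{n}\chi_{1}^{n-k}\chi_{2}^{k}$) this principal series of $\GL_{n+1}(\BC)$ is precisely the functorial lift of $\Ind^{\GL_{2}}_{B_{2}}(\chi_{1},\chi_{2})$, so the stated identity is exactly the claim that $\Theta^{\rho}$ as defined in \eqref{eq:distributioncomplex} intertwines these two normalized characters.

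Next I would substitute the definition \eqref{eq:distributioncomplex} into the right-hand side integral and carry out the $\gam$-integration against the delta distributions. Writing $t = (t_{1},t_{2})$ for the torus coordinates, each summand $\del_{\prod_{k}T^{n+1-k}_{w(k)}}(t_{1})\otimes\del_{\prod_{k}T^{k-1}_{w(k)}}(t_{2})$ forces $t_{1} = \prod_{k=1}^{n+1}T_{w(k)}^{n+1-k}$ and $t_{2} = \prod_{k=1}^{n+1}T_{w(k)}^{k-1}$; plugging these into $\sum_{v\in S_{2}}\chi_{v(1)}(t_{1})\chi_{v(2)}(t_{2})$ and using multiplicativity of the characters $\chi_{i}$, one gets a sum over $S_{n+1}\times S_{2}$ of monomials in $\chi_{1},\chi_{2}$ evaluated at the $T_{j}$. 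The factor $\tfrac12$ exactly cancels the $|S_{2}|=2$-fold redundancy coming from swapping $(\chi_{1},\chi_{2})$: for each $w\in S_{n+1}$, the two choices $v=\mathrm{id}$ and $v=(12)$ combine, and reindexing $w \mapsto w\cdot w_{0}$ by the longest element of $S_{n+1}$ (which sends $n+1-k \leftrightarrow k-1$) shows the $v$-sum doubles each term. What remains is $\sum_{w\in S_{n+1}}\prod_{j=1}^{n+1}\chi_{1}(T_{j})^{?}\chi_{2}(T_{j})^{?}$ where the exponents of $\chi_{1}$ range over the multiset $\{n,n-1,\dots,0\}$ — precisely matching $D^{1/2}_{\GL_{n+1}}(g)\tr_{\Ind(\chi_{1}^{n},\dots,\chi_{2}^{n})}(g)$ via the $\GL_{n+1}(\BC)$ induced character formula.

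**The main obstacle** will be bookkeeping the combinatorics so that the exponent multisets and the $S_{n+1}$-orbits match on the nose, together with the delicate factor-of-$\tfrac12$ and the compatibility of conjugation-invariance with the delta-distribution support — i.e.\ checking that transporting a regular semisimple $g \in \GL_{n+1}(\BC)$ to $T_{n+1}(\BC)$ and applying \eqref{eq:distributioncomplex} is independent of the choice of diagonalization, which is where $S_{n+1}$-invariance of the formula is genuinely used. A secondary point to verify carefully is that the invariant distribution defined only on the torus extends (by Weyl-averaging) to a well-defined invariant distribution on all of $\GL_{n+1}(\BC)\times\GL_{2}(\BC)$, and that pairing it against the locally integrable function $D^{1/2}_{\GL_{2}}\tr_{\pi}$ is legitimate — but over $\BC$ this is standard since all the characters are honest smooth functions and the tori are connected, so there are no stable-conjugacy subtleties. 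Everything else is the routine principal-series character computation over an algebraically closed local field.
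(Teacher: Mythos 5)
Your proposal is correct and follows essentially the same route as the paper's own argument: both reduce to the induced-character formula on the diagonal torus, rewrite the normalized character of $\Ind_{B_{n+1}}^{\GL_{n+1}}(\chi_1^n,\dots,\chi_2^n)$ as $\sum_{s\in S_{n+1}}\chi_1(\prod_k T_{s(k)}^{n+1-k})\chi_2(\prod_k T_{s(k)}^{k-1})$ by multiplicativity, and then cancel the $|S_2|=2$ redundancy against the factor $\tfrac12$ via the reindexing by the longest element of $S_{n+1}$. Your write-up is if anything slightly more careful than the paper's, which leaves the final substitution as a ``straightforward computation.''
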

Note that actually the distribution (\ref{eq:distributioncomplex}) works for tempered principal series of $\GL_{2}(F)$ over any local field $F$ of characteristic zero.

\paragraph{\textbf{Acknowledgement.}}
Z. Luo would like to thank his advisor D. Jiang for reading the manuscript carefully with helpful feedback, and continued encouragement. The work of Z. Luo is supported in part through NSF Grant: DMS-1901802 of D. Jiang.

\section{Functorial image}\label{sec:functorialimage}

In this section, following \cite{knapp55local} and \cite[\S 8]{gl2llc}, we are going to review the local Langlands correspondence for $\GL_{2}(F)$ where $F$ is any local field of characteristic zero with residue characteristic not equal to $2$. We also calculate the functorial images of tempered Langlands parameters of $\GL_{2}(F)$ along $\rho = \Sym^{n}$. We are going to see that the functorial images of tempered Langlands parameters share strong similarities over any local field $F$ of characteristic zero with residue characteristic not equal to $2$. In particular we establish Theorem \ref{thm:functorialim:parallel} and Theorem \ref{thm:lifttolevi} in this section.

\subsection{Archimedean case}

\subsubsection{Complex case}
Let $F=\BC$. The Weil group $W_{F}$ of $F$ is isomorphic to $\BC^{\times}$. For any quasi-character $\chi$ of $\BC^{\times}$, there exists $l\in \BZ$ and $t\in \BC$ such that 
$$
\chi(z)= [z]^{l}|z|^{t}_{\BC}, \quad z\in \BC^{\times}
$$
where $[z]= z/|z|$.

The set of equivalence classes of irreducible admissible representations of $\GL_{2}(F)$ is parametrized by the equivalence classes of $2$-dimensional semi-simple complex representations of $W_{F}$. More precisely, for any $2$-dimensional semi-simple complex representation $\vphi$ of $W_{F}$, up to equivalence $\vphi$ is the direct sum of two quasi-characters $\chi_{1}$ and $\chi_{2}$, with parameters $(l_{1},t_{1})$ and $(l_{2},t_{2})$ in $\BZ\times \BC^{\times}$ satisfying $\Re(t_{1})\geq \Re(t_{2})$. We form the induced representation $\Ind^{\GL_{2}}_{B_{2}}(\chi_{1},\chi_{2})$. The representation has a unique irreducible quotient which we denote by $J(\chi_{1},\chi_{2})$. The construction exhausts all the irreducible admissible representations of $\GL_{2}(F)$, and the correspondence $\vphi = \chi_{1}\oplus \chi_{2}\leftrightarrow J(\chi_{1},\chi_{2})$ gives us the desired bijective parametrization.

From \cite[\S 3]{MR2684298} the tempered representations of $\GL_{2}(F)$ are parametrized by the unitary characters $\chi_{1},\chi_{2}$ of $F^{\times}$, and they are given by the fully induced principal series representations $\Ind^{\GL_{2}}_{B_{2}}(\chi_{1},\chi_{2})$. Similar results hold for $\GL_{n}(F)$ as well. It follows that we can draw the following conclusion about the functorial image of $\Ind^{\GL_{2}}_{B_{2}}(\chi_{1},\chi_{2})$ along $\rho = \Sym^{n}$.

\begin{lem}\label{functorial:complex}
Given a tempered principal series representation $\Ind^{\GL_{2}}_{B_{2}}(\chi_{1},\chi_{2})$ of $\GL_{2}(F)$, its functorial image along $\rho = \Sym^{n}$ is given by the following fully induced tempered principal series representation of $\GL_{n+1}(F)$
$$
\Ind^{\GL_{n+1}}_{B_{n+1}}(\chi^{n}_{1},\chi^{n-1}_{1}\chi_{2},...,\chi^{n}_{2}).
$$
\end{lem}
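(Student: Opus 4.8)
The plan is to reduce everything to the compatibility of the local Langlands correspondence for $\GL_m(\BC)$ with parabolic induction, together with the purely combinatorial fact that $\Sym^n$ of a direct sum $\chi_1 \oplus \chi_2$ decomposes as $\bigoplus_{j=0}^{n} \chi_1^{n-j}\chi_2^{j}$. Concretely, first I would recall from the discussion preceding the lemma (following \cite[\S 3]{MR2684298}) that over $F = \BC$ every tempered representation of $\GL_m(F)$ is a fully induced unitary principal series $\Ind^{\GL_m}_{B_m}(\eta_1,\dots,\eta_m)$ with the $\eta_i$ unitary characters of $F^\times$, and that under the parametrization $\vphi = \chi_1 \oplus \chi_2 \leftrightarrow J(\chi_1,\chi_2) = \Ind^{\GL_2}_{B_2}(\chi_1,\chi_2)$ (which is irreducible when $\chi_1,\chi_2$ are unitary) the $L$-parameter of a principal series $\Ind^{\GL_m}_{B_m}(\eta_1,\dots,\eta_m)$ is exactly $\eta_1 \oplus \cdots \oplus \eta_m$. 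This is the complex case of the statement that the LLC for $\GL_m$ sends the parabolically induced representation from $\GL_{m_1}\times\cdots\times\GL_{m_r}$ to the direct sum of the corresponding parameters.

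Next I would compute $\Sym^n \circ \vphi$ for $\vphi = \chi_1 \oplus \chi_2$. Writing $V = \BC^2$ with $W_F$ acting by $\chi_1$ on a line $L_1$ and $\chi_2$ on a line $L_2$, the symmetric power $\Sym^n V$ has the monomial basis $L_1^{\otimes(n-j)}\otimes L_2^{\otimes j}$ for $j = 0,\dots,n$, on which $W_F$ acts by $\chi_1^{n-j}\chi_2^{j}$. Hence
\[
\Sym^n \circ \vphi \;=\; \chi_1^{n} \,\oplus\, \chi_1^{n-1}\chi_2 \,\oplus\, \cdots \,\oplus\, \chi_2^{n}.
\]
Each $\chi_1^{n-j}\chi_2^{j}$ is again a unitary character of $F^\times$ since $\chi_1,\chi_2$ are unitary, so this is a tempered (bounded) $(n+1)$-dimensional parameter. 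By the previous paragraph, the tempered representation of $\GL_{n+1}(F)$ attached to this parameter is precisely the fully induced principal series $\Ind^{\GL_{n+1}}_{B_{n+1}}(\chi_1^n, \chi_1^{n-1}\chi_2, \dots, \chi_2^n)$, which is exactly the claimed functorial image. (One should also note that the ordering of the inducing characters is irrelevant up to isomorphism, since over $\BC$ the Weyl group permutes them and the induced representation is irreducible.)

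I do not expect a genuine obstacle here; the lemma is essentially a bookkeeping statement once the two ingredients — compatibility of LLC for $\GL_m(\BC)$ with parabolic induction, and the monomial decomposition of $\Sym^n$ — are in place. The only point requiring a little care is the matching of conventions: one must check that the normalization of $\Ind^{\GL_2}_{B_2}$ used in defining $J(\chi_1,\chi_2)$ is the unitary normalization, so that $\Sym^n$ of the parameter lands on the \emph{unitarily} induced principal series rather than a twist of it, and that the identification $W_{\BC} \cong \BC^\times$ is used consistently so that the characters $\chi_i$ appearing in the parameter are literally the characters $\chi_i$ appearing on the torus. Both are immediate from the setup recalled in this subsection, so the proof is short.
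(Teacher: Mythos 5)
Your proposal is correct and follows essentially the same route as the paper, which states this lemma without a separate proof, treating it as immediate from the recalled facts that tempered representations of $\GL_m(\BC)$ are fully induced unitary principal series parametrized by their inducing unitary characters and that $\Sym^n(\chi_1\oplus\chi_2)=\bigoplus_{j=0}^{n}\chi_1^{n-j}\chi_2^{j}$. Your extra remarks on the unitary normalization of induction and the irrelevance of the ordering of the inducing characters are consistent with the paper's conventions and add nothing that conflicts with its argument.
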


\begin{rmk}\label{rmk:complex}
For later purpose, using parabolic induction by stages, we will also write the functorial image as follows. We recall the standard parabolic subgroup $P_{n+1}(F)=M_{n+1}(F)N_{n+1}(F)$ introduced in the introduction. Consider the following representation on the Levi factor $M_{n+1}(F)$
$$
\pi_{M_{n+1}}= 
\bigg\{
\begin{matrix}
\big\{
\bigotimes_{k=1}^{n/2}
\big(
(\chi_{1}\chi_{2})^{(n-2k)/2}
\otimes \Ind_{B_{2}}^{\GL_{2}}(\chi^{2k}_{1},\chi^{2k}_{2})
\big) \big\}\bigotimes (\chi_{1}\chi_{2})^{n/2}, & \text{ if $n$ is even},\\
\bigotimes_{k=1}^{(n+1)/2}
\big(
(\chi_{1}\chi_{2})^{(n-2k+1)/2}
\otimes \Ind_{B_{2}}^{\GL_{2}}(\chi^{2k-1}_{1},\chi^{2k-1}_{2})
\big) , & \text{ if $n$ is odd}.
\end{matrix}
$$
Then
$$
\Ind_{B_{n+1}}^{\GL_{n+1}}
(\chi_{1}^{n},\chi^{n-1}_{1}\chi_{2},..., \chi^{n}_{2})
\simeq 
\Ind^{\GL_{n+1}}_{P_{n+1}}\pi_{M_{n+1}}\otimes 1_{N_{n+1}}.
$$
\end{rmk}

\begin{rmk}\label{rmk:principalseries}
Following the explicit description of tempered representations of $\GL_{n}(F)$ over any local field $F$ of characteristic zero \cite[\S 3]{MR0499005} \cite[\S 3]{MR2684298}, we immediately realize that the same description in Remark \ref{rmk:complex} holds for any tempered principal series representations of $\GL_{2}(F)$ over any local field $F$ of characteristic zero. 
\end{rmk}

\subsubsection{Real case}
Let $F=\BR$. The Weil group $W_{F}$ can be identified with the following non-split extension of $\BC^{\times}$,
$$
\BC^{\times}\cup j\BC^{\times},
$$
where $j^{2} = -1$ and $jzj^{-1} = \wb{z}$ for any $z\in \BC^{\times}$.

Any finite-dimensional semi-simple representations of $W_{F}$ are completely reducible, and the irreducible ones are either $1$ or $2$ dimensional. In the following we are going to have a quick review for the irreducible semi-simple representations of $W_{F}$. 
\begin{itemize}
\item The set of equivalence classes of $1$-dimensional representations of $W_{F}$ is parametrized by the set $\{\pm 1 \}\times \BC$. More precisely, for any $(i,t)\in \{\pm 1 \}\times \BC$, 
it is associated to the following quasi-character $\chi$ of $W_{F}$,
$$
\chi:z\in \BC^{\times}\to |z|^{t}, \quad \chi(j) = i.
$$
For convenience we will denote $i$ by the sign $\pm$ and use the same notation $(i,t)$ to denote the associated quasi-character.

\item The set of equivalence classes of $2$-dimensional irreducible semi-simple representations of $W_{F}$ is parametrized by the set $\BN\times \BC$. They admit the following description \cite[3.3]{knapp55local}. For any $\vphi = \vphi_{(l,t)}$ an irreducible $2$-dimensional representation of $W_{F}$ parametrized by $(l,t)\in \BN\times \BC$, there exists an ordered basis $\{v_{1},v_{2} \}$ of $\vphi$ such that we have the following matrix presentation 
$$
\vphi|_{\BC^{\times}}(z) = 
\left(
\begin{matrix}
|z|^{t}(\frac{z}{|z|})^{l} & \\
& |z|^{t}(\frac{z}{|z|})^{-l}
\end{matrix}
\right),
\quad 
\vphi(j) = 
\left(
\begin{matrix}
	& (-1)^{l}\\
1 	&
\end{matrix}
\right).
$$
\end{itemize}

\begin{rmk}
We point out a small typo appearing in \cite[3.3]{knapp55local}.
In \cite[3.3]{knapp55local}, when $\vphi$ is associated to $(l,t)\in \BN\times \BC$, the restriction of $\vphi$ to $\BC^{\times}$ has the following matrix presentation under the ordered basis $\{ v_{1},v_{2}\}$ described above
$$
\vphi|_{\BC^{\times}}(z) =
\left(
\begin{matrix}
|z|^{2t}(\frac{z}{|z|})^{l} & \\
& |z|^{2t}(\frac{z}{|z|})^{-l}
\end{matrix}
\right) = |z|^{2t}
\left(
\begin{matrix}
(\frac{z}{|z|})^{l} & \\
& (\frac{z}{|z|})^{-l}
\end{matrix}
\right).
$$
But if we use this description, then $\vphi_{(l,t)} \simeq \vphi_{(l,0)}\otimes (0,2t)$ where $(0,2t)$ is the quasi-character of $W_{F}$ sending $z\in \BC^{\times}$ to $|z|^{2t}$ and $j$ to $1$. It turns out that the irreducible admissible representation of $\GL_{2}(\BR)$ associated to $\vphi_{(l,0)}\otimes (0,2t)$ should be $D_{l}\otimes |\det (\cdot)|^{2t}$ if we follow the usual compatibility of local Langlands correspondence for $\GL_{2}(\BR)$ with unramified twist, which is not $D_{l}\otimes |\det(\cdot)|^{t}$ as claimed in \cite[3.3]{knapp55local}. Therefore we arrive at a contradiction. For a more precise reference, we refer to \cite[\S 1, Proposition~2]{prasad2017reducible} and \cite[\S 3.1]{MR3748235}, where in the latter reference, the author switches $t$ to $2t$ uniformly for the $L$-parameters of quasi-characters, which is actually equivalent to our notation.
\end{rmk}

The set of equivalence classes of irreducible admissible representations of $\GL_{2}(F)$ is parametrized by the set of  equivalence classes of $2$-dimensional semi-simple complex representations of $W_{F}$. More precisely, for any $2$-dimensional semi-simple complex representation $\vphi$ of $W_{F}$, the following description for the associated irreducible admissible representations of $\GL_{2}(F)$ holds.

\begin{itemize}
\item When $\vphi$ is reducible, $\vphi$ is the direct sum of two quasi-characters $\chi_{1}$ and $\chi_{2}$ of $W_{F}$ with parameters $(i_{1},t_{1}),(i_{2},t_{2})\in \{\pm 1 \}\times \BC$. For a quasi-character $\chi$ of $W_{F}$ with parameter $(i,t)\in \{\pm 1 \}\times \BC$, the following description holds.
\begin{enumerate}
\item When $i = +$, $\chi$ is defined as follows
$$
a\in \GL_{1}(F)\to |a|^{t};
$$

\item When $i = -$, $\chi$ is defined as follows
$$
a\in \GL_{1}(F)\to \sgn(a)|a|^{t};
$$
\end{enumerate}
For $\vphi = \chi_{1}\oplus \chi_{2}$ with $\Re(t_{1})\geq \Re(t_{2})$. Consider the induced representation $\Ind^{\GL_{2}}_{B_{2}}(\chi_{1},\chi_{2})$. Parallel to the complex situation, it has a unique irreducible quotient which is denoted as $J(\chi_{1},\chi_{2})$;

\item when $\vphi$ is irreducible with parameter $(l,t)\in \BN\times \BC$, it is associated to the discrete series representation
$$
D_{l}\otimes |\det(\cdot)|^{t}
$$
of $\GL_{2}(F)$ \cite[3.4]{knapp55local}.
\end{itemize}

It follows that we can draw the following conclusion about the functorial image of $D_{l}\otimes |\det(\cdot)|^{t}$ along $\rho = \Sym^{n}$. For notational convenience, we write the discrete series representation of $\GL_{2}(F)$ with parameter $(l,t)\in \BN\times \BC$ as $\pi_{(l,t)}$, and use the same notation $(i,t)$ to denote the quasi-character of $\GL_{1}(F)$ with parameter $(i,t)\in \{ \pm 1 \}\times \BC$. We may assume that $t\in i\BR$ where by abuse of notation $i$ is the imaginary number satisfying $i^{2} = -1$ as we only concern about the tempered representations of $\GL_{2}(F)$.

\begin{lem}\label{functorial:real}
The functorial image along $\rho = \Sym^{n}$ of $\pi_{(l,t)}$ is of the following form,
$$
\Ind^{\GL_{n+1}}_{P_{n+1}}
\pi_{M_{n+1}}\otimes 1_{N_{n+1}},
$$
where 
$$
\pi_{M_{n+1}} = 
\bigg\{
\begin{matrix}
\{
\bigotimes_{k=1}^{n/2}
\big((-1)^{l(n/2-k)}, (n-2k)t)\otimes 
\pi_{(2kl, 2kt)}\big)
\}
\bigotimes ((-1)^{nl/2},nt),  & \text{ if $n$ is even},
\\
\bigotimes_{k=1}^{(n+1)/2}
\big((-1)^{l(n-2k+1)/2}, (n-2k+1)t/2)\otimes 
\pi_{((2k-1)l, (2k-1)t)}\big),  & \text{ if $n$ is odd}.\\
\end{matrix}
$$
\end{lem}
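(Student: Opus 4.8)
The plan is to reduce the statement to a purely $W_F$-theoretic computation combined with the local Langlands correspondence for $\GL_2(\BR)$ and $\GL_{n+1}(\BR)$, exactly paralleling the treatment of the complex case in Lemma~\ref{functorial:complex} and Remark~\ref{rmk:complex}. First I would recall that, by the description recalled just above, $\pi_{(l,t)}$ corresponds to the irreducible $2$-dimensional tempered parameter $\vphi_{(l,t)}\colon W_F\to \GL_2(\BC)$ with the explicit matrix presentation on $\BC^\times$ and on $j$ given in the text. So it suffices to compute $\Sym^n\circ\vphi_{(l,t)}$ as a representation of $W_F$ — which is precisely the content of Theorem~\ref{thm:functorialim:parallel} in the real case — and then translate the resulting direct sum of irreducible $W_F$-representations back into a parabolically induced representation of $\GL_{n+1}(\BR)$ via LLC, using that the Langlands quotient of a tempered principal series attached to a direct sum of tempered pieces is the full (irreducible) induced representation.

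The computational heart is diagonalizing $\Sym^n$ of the matrices. On $\BC^\times$, $\vphi_{(l,t)}(z)=\mathrm{diag}\big(|z|^t(z/|z|)^l,\,|z|^t(z/|z|)^{-l}\big)$, so $\Sym^n$ acts on the basis $v_1^{\,n-j}v_2^{\,j}$ ($0\le j\le n$) by the scalar $|z|^{nt}(z/|z|)^{(n-2j)l}$. Thus as a $\BC^\times$-representation $\Sym^n\circ\vphi_{(l,t)}$ decomposes into characters $z\mapsto |z|^{nt}(z/|z|)^{(n-2j)l}$. Next I would see how $\vphi(j)$ acts: $\vphi(j)$ interchanges $v_1$ and $v_2$ up to the scalar $(-1)^l$, hence on $\Sym^n$ it sends $v_1^{\,n-j}v_2^{\,j}\mapsto (-1)^{l(n-j)}v_1^{\,j}v_2^{\,n-j}$ (the exponent of $(-1)^l$ being the number of $v_2$'s, namely $j$, contributed from the $v_1^{n-j}$ factor — I would pin down the exact power by a direct expansion, being careful with signs). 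Pairing the index $j$ with $n-j$, for $j\ne n-j$ one gets a $2$-dimensional $W_F$-summand which, by the uniqueness of the $(l',t')$-parametrization, must be $\vphi_{(|n-2j|l,\,nt)}$ up to a twist by the sign character recording the action of $j$ on the fixed "antidiagonal" vector $v_1^{\,n-j}v_2^{\,j}+(-1)^{l(\cdot)}v_1^{\,j}v_2^{\,n-j}$; when $n$ is even and $j=n/2$ one gets a single character, whose sign is $(-1)^{nl/2}$ and whose absolute value exponent is $nt$. Matching indices $j=(n-2k)/2$ (resp.\ $j$ determined by $2k-1=n-2j$) reproduces exactly the summands $\chi_{(-1)^{l(n/2-k)},(n-2k)t}\otimes\vphi_{(2kl,2kt)}$ and $\chi_{(-1)^{(n-2k+1)l/2},(n-2k+1)t}\otimes\vphi_{((2k-1)l,(2k-1)t)}$ appearing in Theorem~\ref{thm:functorialim:parallel}.

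Having identified $\Sym^n\circ\vphi_{(l,t)}$ as a direct sum of irreducible tempered $W_F$-parameters, I would invoke LLC for $\GL_{n+1}(\BR)$ together with the compatibility of LLC with parabolic induction: the tempered $L$-packet attached to a direct sum $\bigoplus_i \psi_i$ of irreducible tempered parameters $\psi_i$ (of sizes $d_i$, $\sum d_i=n+1$) is the irreducible representation $\Ind^{\GL_{n+1}}_{P}\big(\bigotimes_i \pi_{\psi_i}\big)\otimes 1_{N}$, where $P$ is the standard parabolic with Levi $\prod_i\GL_{d_i}(\BR)$ and $\pi_{\psi_i}$ is the discrete series (or character) attached to $\psi_i$. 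Grouping the $2$-dimensional pieces and, when $n$ is even, the single $1$-dimensional piece, the Levi is exactly $M_{n+1}(\BR)\simeq\GL_2(\BR)^{\times \lfloor (n+1)/2\rfloor}$ (times $\GL_1(\BR)$ when $n$ is even), and translating each $\chi_{i,t'}\otimes\vphi_{(l',t')}$ to $(i,t'/1)\otimes\pi_{(l',t')}$ — here I must be careful to track the normalization so that the absolute-value exponents come out as stated (note the $(n-2k+1)t/2$ in the odd case versus $(n-2k+1)t$ in Theorem~\ref{thm:functorialim:parallel}, which is just the usual $z\leftrightarrow z^{1/2}$ discrepancy between parameters of $W_F$ on $\BC^\times$ and characters of $\BR^\times$, the same bookkeeping issue flagged in the remark correcting \cite[3.3]{knapp55local}) — yields precisely the asserted $\pi_{M_{n+1}}$. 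The main obstacle I anticipate is entirely in this last bookkeeping: getting every power of $(-1)^l$ and every factor of $2$ (or $1/2$) in the absolute-value exponents exactly right, consistently between the $W_F$-side parameters and the $\GL_2(\BR)$-side data, and making sure the ordering of the tensor factors matches the standard parabolic $P_{n+1}$. Once the normalizations are fixed, the argument is a direct unwinding with no essential difficulty beyond what is already done for $F=\BC$.
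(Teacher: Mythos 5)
Your proposal follows the paper's proof essentially verbatim: compute $\Sym^{n}\circ\vphi_{(l,t)}$ in the monomial basis $v_{1}^{n-j}v_{2}^{j}$, pair the index $j$ with $n-j$ under the antidiagonal action of $\vphi(j)$ to read off the irreducible $W_{F}$-summands, and then invoke \cite[\S 3]{MR2684298} to conclude that the associated representation is fully induced from $P_{n+1}$. The only slip is the scalar $(-1)^{l(n-j)}$ (since $v_{1}\mapsto v_{2}$ and $v_{2}\mapsto(-1)^{l}v_{1}$, the correct scalar on $v_{1}^{n-j}v_{2}^{j}$ is $(-1)^{lj}$), but this is harmless: it does not affect the isomorphism class of the two-dimensional summands, and it gives the correct sign $(-1)^{nl/2}$ on the fixed line when $n$ is even.
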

\begin{proof}
We compute the representation $\Sym^{n}\circ \vphi$ explicitly with $\vphi = \vphi_{(l,t)}$.

Under the ordered basis $\{ v_{1},v_{2} \}$, we have the following matrix presentations,
$$
\vphi|_{\BC^{\times}}(z) = 
\left(
\begin{matrix}
|z|^{t}(\frac{z}{|z|})^{l} & \\
 & |z|^{t}(\frac{z}{|z|})^{-l}
\end{matrix}
\right),
\quad 
\vphi(j) = 
\left(
\begin{matrix}
	& (-1)^{j} \\
1	&
\end{matrix}
\right).
$$
After composing with $\Sym^{n}$, 
\begin{align*}
&\Sym^{n}\circ \vphi|_{\BC^{\times}}(z) = 
|z|^{nt}\diag ((\frac{z}{|z|})^{nl},(\frac{z}{|z|})^{(n-2)l},...,(\frac{z}{|z|})^{-nl}),
\\
&\Sym^{n}\circ \vphi(j) = 
\text{antidiag}
((-1)^{nl},(-1)^{(n-1)l},...,(-1)^{l},1).
\end{align*}
It follows that the following description for $\Sym^{n}\circ \vphi_{(l,t)}$ holds,
\begin{align*}
\Sym^{n}\circ \vphi_{(l,t)} 
=\bigg\{
\begin{matrix}
\bigoplus_{k=1}^{n/2}
\big(
\vphi_{(2kl,2kt)}\otimes \chi_{(-1)^{l(n/2-k)},(n-2k)t}
\big)
\bigoplus \chi_{(-1)^{nl/2},nt}, & \text{ if $n$ is even},
\\
\bigoplus_{k=1}^{(n+1)/2}
\big(
\vphi_{(2k-1)l,(2k-1)t}
\otimes\chi_{(-1)^{(n+1-2k)l/2}, (n+1-2k)t}
\big)
,
& \text{ if $n$ is odd}.
\end{matrix}
\end{align*}
Here for any $(i,t)\in \{\pm 1 \}\times \BC$, the quasi-character $\chi_{i,t}$ of $W_{F}$ is defined as follows,
$$
\chi_{i,t}(z) = |z|^{t},\quad \chi_{i,t}(j) = i.
$$
From \cite[\S 3]{MR2684298} the associated irreducible admissible representation for $\Sym^{n}\circ \vphi_{(l,t)}$ is fully induced, which is exactly of the form described in the statement.
\end{proof}

\begin{rmk}\label{rmk:parallel}
Comparing Lemma \ref{functorial:real} with Remark \ref{rmk:complex}, we find that the functorial lifting for tempered principal series and discrete series share strong similarities with each other. 

More precisely, as is shown in \cite[2.19]{MR3117742} (which follows from the character table in \cite{MR855239}), for any $l\geq 1$ the discrete series representation $D_{l}$ of $\GL_{2}(F)$ has central character $((-1)^{l},0)$, therefore the central character of $D_{l}\otimes |\det(\cdot)|^{t}$ evaluating at $t\in F^{\times}\hookrightarrow 
\left(
\begin{matrix}
t & \\
& t
\end{matrix}
\right)$ is the same as the character $((-1)^{l},2t)$ evaluating at $t\in F^{\times}$.

When $n$ is even, for any $1\leq k\leq n/2$, the character $(\chi_{1}\chi_{2})^{(n-2k)/2}$ is exactly $(n/2-k)$-th power of the central character of $\Ind_{B_{2}}^{\GL_{2}}(\chi_{1},\chi_{2})$ evaluated at $t\in F^{\times}\hookrightarrow 
\left(
\begin{matrix}
t 	& \\
	&t
\end{matrix}
\right)
$, and $((-1)^{l(n/2-k)}, (n-2k)t)$ is also $(n/2-k)$-th power of the central character of $\pi_{(l,t)}$ evaluated at $t\in F^{\times}\hookrightarrow 
\left(
\begin{matrix}
t 	& \\
	&t
\end{matrix}
\right)
$. Parallel results hold for the case when $n$ is odd.
\end{rmk}

\subsection{$p$-adic case}\label{sub:p-adicfunctorial}
Let $F$ be a $p$-adic field with residue characteristic not equal to $2$. We recall the local Langlands correspondence for $\GL_{2}(F)$ following \cite[\S 8]{gl2llc}, which says that there is a canonical bijection between the set of equivalence classes of irreducible smooth representations of $\GL_{2}(F)$ and the set of equivalence classes of $2$-dimensional representations of $\CW_{F}$ whose restriction to $\SL_{2}(\BC)$ is algebraic and the restriction to $W_{F}$ is continuous with Frobenius semi-simple image (see \cite[\S 2.1]{MR2730575} for the isomorphism between $\CW_{F}$ and the Weil-Deligne group introduced in \cite[\S 7]{gl2llc}).

More precisely, given a $2$-dimensional representation $\vphi$ of $\CW_{F}$ satisfying the above descriptions, the following local Langlands correspondence for $\GL_{2}(F)$ holds.
\begin{itemize}
\item When $\vphi$ is irreducible and $\vphi|_{\SL_{2}(\BC)}$ is trivial, $\vphi$ is associated to a supercuspidal representation of $\GL_{2}(F)$;

\item When $\vphi$ is the tensor product of a quasi-character $\chi$ of $W_{F}$ and the standard representation of $\SL_{2}(\BC)$, $\vphi$ is associated to the twisted Steinberg representation $\chi\otimes \mathrm{St}_{2}$ of $\GL_{2}(F)$, where $\chi$ can be identified with a quasi-character of $F^{\times}$ via $W^{\mathrm{ab}}_{F}\simeq F^{\times}$ \cite{MR0220701}, and $\mathrm{St}_{2}$ is the unique sub-representation appearing in the principal series representation $\Ind_{B_{2}}^{\GL_{2}}(\del^{\frac{1}{2}}_{B_{2}})$;

\item When $\vphi$ is the direct sum of two quasi-characters $\chi_{1},\chi_{2}$ of $W_{F}$ with $\chi_{1}/\chi_{2}\neq |\cdot|^{\pm 1}$, and $\vphi|_{\SL_{2}(\BC)}$ is trivial, $\vphi$ is associated to the irreducible principal series representation $\Ind^{\GL_{2}}_{B_{2}}(\chi_{1},\chi_{2})$ of $\GL_{2}(F)$;

\item When $\vphi$ is the direct sum of two quasi-characters $\chi_{1},\chi_{2}$ of $W_{F}$ with $\chi_{1} = \chi|\cdot|^{1/2}$ and $\chi_{2} = \chi|\cdot|^{-1/2}$, and $\vphi|_{\SL_{2}(\BC)}$ is trivial, $\vphi$ is associated to the $1$-dimensional quasi-character $\chi$ of $\GL_{2}(F)$.
\end{itemize}
We will focus on the tempered representations \cite{MR0499005} of $\GL_{2}(F)$. In particular, the quasi-characters appearing in the above parametrization are always taken to be unitary, and we will not consider the $1$-dimensional representations of $\GL_{2}(F)$.

It turns out that the functorial lifting along $\rho = \Sym^{n}$ of tempered principal series representations of $\GL_{2}(F)$ have the same description as in Lemma \ref{functorial:complex}, and the functorial lifting of the twisted Steinberg representation with unitary character $\chi$, which is denoted as $\chi\otimes \mathrm{St}_{2}$, is given by the twisted Steinberg representation $\chi^{n}\otimes \mathrm{St}_{n+1}$, where $\mathrm{St}_{n+1}$ is the unique sub-representation of $\Ind^{\GL_{n+1}}_{B_{n+1}}(\del^{1/2}_{B_{n+1}})$.

Finally the functorial lifting of the supercuspidal representations of $\GL_{2}(F)$ admits the following description. More precisely, the supercuspidal representations of $\GL_{2}(F)$ are parametrized by the admissible pairs $(E/F,\theta)$ defined in \cite[18.2]{gl2llc}, where $E$ is a quadratic extension of $F$, and $\theta$ is an admissible character of $E^{\times}$. We denote the associated supercuspidal representation by $\pi_{\theta}$. Then the Langlands parameter $\vphi$ for $\pi_{\theta}$ is of the form 
$$
\vphi  = \Ind_{W_{E}}^{W_{F}}\theta,
$$
where $W_{E}$ is the Weil group of $E$. From the isomorphism $W_{E}^{\mathrm{ab}}\simeq E^{\times}$ \cite{MR0220701} the quasi-character $\theta$ of $E^{\times}$ can be identified with a quasi-character of $W_{E}$. Let the associated quotient map be $a_{E}:W_{E}\to W^{\ab}_{E}\simeq E^{\times}$.

For notational convenience, when $\theta$ is not an admissible character of $E^{\times}$, we will still denote by $\pi_{\theta}$ the irreducible admissible representation of $\GL_{2}(F)$ associated to the Langlands parameter $\Ind_{W_{E}}^{W_{F}}\theta$.

Let $W_{F}/W_{E}\simeq \Gal(E/F) = \{ 1,\lam\}$, where $\lam$ is the nontrivial involution defined on $E$ fixing $F$. Let $\RN_{E/F}$ be the norm map.

Then the following description for the functorial image of $\pi_{\theta}$ along $\Sym^{n}$ holds.

\begin{lem}\label{functorial:padic}
The functorial image along $\rho =\Sym^{n}$ of $\pi_{\theta}$ is of the following form,
$$
\Ind^{\GL_{n+1}}_{P_{n+1}}
\pi_{M_{n+1}}\otimes 1_{N_{n+1}},
$$
where 
$$
\pi_{M_{n+1}} = \bigg\{
\begin{matrix}
\bigotimes_{k=1}^{n/2}
(
\theta^{(n-2k)/2}\otimes \pi_{\theta^{2k}}
)
\bigotimes (\theta)^{n/2} &\text{ if $n$ is even},\\
\bigotimes_{k=1}^{(n+1)/2}
(\theta^{(n-2k+1)/2}\otimes \pi_{\theta^{2k-1}})
, & \text{ if $n$ is odd}.
\end{matrix}
$$
Here when taking tensor product we abuse the notation viewing $\theta$ as a quasi-character of $F^{\times}$ via restriction.
\end{lem}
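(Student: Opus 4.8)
The plan is to compute $\Sym^{n}\circ\vphi$ explicitly for $\vphi=\Ind^{W_F}_{W_E}\theta$, exactly paralleling the real case in Lemma \ref{functorial:real}, and then read off the associated tempered representation of $\GL_{n+1}(F)$ using the local Langlands correspondence together with the explicit description of tempered representations from \cite[\S 3]{MR0499005}. The representation theory of $W_F$ here is the same, as an abstract fact about induced representations, as the real case: write $V=\Ind^{W_F}_{W_E}\theta$, choose a basis $\{v_1,v_2\}$ adapted to the decomposition $V|_{W_E}=\theta\oplus\theta^{\lambda}$ (where $\theta^{\lambda}(x)=\theta(\lambda x\lambda^{-1})$), and note that a chosen $\lambda$-lift acts by swapping the two lines. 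On $\Sym^{n}V$, restricted to $W_E$, the natural monomial basis $v_1^{n-i}v_2^{i}$ diagonalizes with eigencharacter $\theta^{n-i}(\theta^{\lambda})^{i}$, and the $\lambda$-lift sends $v_1^{n-i}v_2^{i}\mapsto$ (scalar)$\,v_1^{i}v_2^{n-i}$.

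The key computational step is to pair up the weight spaces $\{i,\,n-i\}$ under the $\lambda$-action. For $n$ even, the middle weight $i=n/2$ is fixed, giving a one-dimensional $W_F$-subrepresentation with character $(\theta\theta^{\lambda})^{n/2}$ restricted appropriately; restricting along $\RN_{E/F}$, $\theta\theta^{\lambda}$ is the restriction of $\theta$ to $F^{\times}$ (abusing notation as in the statement), so this factor reads $\theta^{n/2}$ viewed on $W_F$. The remaining weight spaces pair into two-dimensional pieces; the pair $\{n/2-k,\,n/2+k\}$ spans a $W_F$-stable subspace isomorphic, after twisting out the common central character $(\theta\theta^{\lambda})^{(n-2k)/2}=\theta^{(n-2k)/2}$, to $\Ind^{W_F}_{W_E}(\theta^{2k})$ — one checks this by verifying the $W_E$-eigencharacters are $\theta^{2k}$ and $(\theta^{2k})^{\lambda}$ and that the $\lambda$-lift interchanges them, which is precisely the monomial criterion for being an induced representation. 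For $n$ odd there is no fixed weight; the pairs $\{(n+1)/2-k,\,(n-1)/2+k\}$, $k=1,\dots,(n+1)/2$, each give a two-dimensional induced piece with central twist $\theta^{(n-2k+1)/2}$ and inner induced character $\theta^{2k-1}$. This yields the stated decomposition of $\Sym^{n}\circ\vphi$ matching the odd/even formulas in Theorem \ref{thm:functorialim:parallel}.

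Finally, to pass from $L$-parameters to representations: each two-dimensional summand $\theta^{(n-2k+\epsilon)/2}\otimes\Ind^{W_F}_{W_E}\theta^{2k-\epsilon}$ is (up to the abused central twist, which is a genuine character of $F^{\times}$) the $L$-parameter of the tempered representation $\theta^{(n-2k+\epsilon)/2}\otimes\pi_{\theta^{2k-\epsilon}}$ of $\GL_2(F)$ by the very definition of $\pi_{\theta^{k}}$ given just before the lemma, and the one-dimensional summand (for $n$ even) corresponds to the character $\theta^{n/2}$ of $\GL_1(F)$. The tensor product of these parameters is the $L$-parameter of the product representation $\pi_{M_{n+1}}$ on the Levi $M_{n+1}(F)$, and since a direct sum of tempered parameters of general linear groups corresponds to the fully parabolically induced representation (here is where \cite[\S 3]{MR0499005} enters, guaranteeing full — irreducible — induction for tempered data), we conclude $\Sym^{n}\circ\vphi$ corresponds to $\Ind^{\GL_{n+1}}_{P_{n+1}}\pi_{M_{n+1}}\otimes 1_{N_{n+1}}$, as claimed.

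The main obstacle is bookkeeping rather than conceptual: one must carefully track the central-character twists so that the abuse ``$\theta$ as a character of $F^{\times}$ via restriction'' is consistent everywhere — in particular verifying that $(\theta\theta^{\lambda})^{j}$ really descends to $(\theta|_{F^{\times}})^{j}$ under $\RN_{E/F}$ and that the twist $\theta^{(n-2k+1)/2}$ has integer (or at worst half-integer absorbed into the discrete-series-type parameter) exponent in each parity case — and to check the monomial/eigenbasis criterion cleanly identifies the paired weight spaces as $\Ind^{W_F}_{W_E}\theta^{m}$ rather than a sum of two characters (which would happen only if $\theta^{m}=(\theta^{m})^{\lambda}$, i.e. $\theta^m$ factors through $\RN_{E/F}$; for admissible $\theta$ this fails for the relevant $m$, but the statement also covers non-admissible $\theta$ via the $\pi_{\theta^k}$ convention, so no case distinction is actually needed). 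I expect the proof to be essentially a transcription of the argument for Lemma \ref{functorial:real} with $W_E\subset W_F$ playing the role of $\BC^{\times}\subset W_{\BR}$.
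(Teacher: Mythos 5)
Your proposal is correct and follows essentially the same route as the paper: an explicit matrix computation of $\Sym^{n}\circ\Ind^{W_F}_{W_E}\theta$ in the monomial basis, pairing of the $W_E$-weight spaces swapped by the $\lambda$-lift into induced two-dimensional pieces (plus the middle character for $n$ even, identified with $\theta^{n/2}|_{F^{\times}}$ via the norm and class field theory), the projection-formula twist $\Ind^{W_F}_{W_E}(\theta\theta^{\lambda})^{b}\theta^{a-b}\simeq\theta^{b}\otimes\Ind^{W_F}_{W_E}\theta^{a-b}$, and the full-induction fact from \cite[\S 3]{MR0499005}. The subtlety you flag about possibly non-admissible $\theta^{m}$ is handled in the paper exactly as you suggest, by the standing convention on $\pi_{\theta^{k}}$ for non-admissible characters.
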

\begin{proof}
The Langlands parameter $\vphi = \Ind^{W_{F}}_{W_{E}}\theta$ admits the following description. We fix an element $\wt{\lam}\in W_{F}\bs W_{E}$ such that $\wt{\lam}^{2} = \lam_{E}\in W_{E}$. Then $\vphi$ has an ordered basis $\{v_{1},v_{2} \}$, where $v_{1}$ is the function on $W_{F}$ defined by $v_{1}(1) = 1$ and $v_{1}(\wt{\lam}) = 0$, and $v_{2}$ is the function on $W_{F}$ defined by $v_{2}(1) = 0$ and $v_{2}(\wt{\lam}) = 1$. Under the ordered basis, $\vphi$ has the following matrix presentation,
$$
\vphi|_{W_{E}} = 
\left(
\begin{matrix}
\theta\circ a_{E} & \\
 & \theta^{\lam}\circ a_{E}
\end{matrix}
\right),\quad 
\vphi(\wt{\lam}) = 
\left(
\begin{matrix}
	&\theta\circ a_{E}(\lam_{E})\\
1	&
\end{matrix}
\right)
$$
where $\theta^{\lam} = \theta\circ \lam$. For any $w_{e}\in W_{E}$, we have $\theta^{\lam}\circ a_{E}(w_{e})  =\theta\circ a_{E}(\wt{\lam}w_{e}\wt{\lam}^{-1})$.

Therefore after composing with $\Sym^{n}$, 
\begin{align*}
&\Sym^{n}\circ \vphi|_{W_{E}} = 
\diag(\theta^{n}\circ a_{E}, \theta^{n-1}\theta^{\lam}\circ a_{E},...,(\theta^{\lam})^{n}\circ a_{E}),
\\
&
\Sym^{n}
\circ \vphi(\wt{\lam})
=\mathrm{antidiag}
(\theta^{n}\circ a_{E}(\lam_E), \theta^{n-1}\circ a_{E}(\lam_{E}),...,1).
\end{align*}
It follows that the following description for the representation $\Sym^{n}\circ \vphi$ holds.

\begin{itemize}
\item When $n$ is odd, for any integer $0\leq k\leq \frac{n-1}{2}$, we consider the following representation $\vphi_{k}$ of $W_{F}$,
$$
\vphi_{k}|_{W_{E}}
=
\left(
\begin{matrix}
\theta^{n-k}(\theta^{\lam})^{k}\circ a_{E} & \\
& \theta^{k}(\theta^{\lam})^{n-k}\circ a_{E}
\end{matrix}
\right),
\quad 
\vphi_{k}(\wt{\lam}) = 
\left(
\begin{matrix}
	&\theta^{n-k}\circ a_{E}(\lam_{E})\\
\theta^{k}\circ a_{E}(\lam_{E})&
\end{matrix}
\right).
$$
Then $\Sym^{n}\circ \vphi \simeq \bigoplus_{k=0}^{\frac{n-1}{2}}\vphi_{k}$. 

Here we notice that under the ordered basis $\{v_{1}^{k},v_{2}^{k}\}$ given by
$$
v_{1}^{k}(1) = 1, v_{1}^{k}(\wt{\lam}) =0, \quad v_{2}^{k}(1)=0, v^{k}_{2}(\wt{\lam}) = \theta^{-k}\circ a_{E}(\lam_{E}), 
$$
the representation 
$\vphi_{k}$ is isomorphic to $\Ind^{W_F}_{W_{E}}\theta^{n-k}(\theta^{\lam})^{k}$. In particular
$$
\Sym^{n}\circ \vphi \simeq  
\bigoplus_{k=0}^{\frac{n-1}{2}}
\Ind^{W_F}_{W_{E}}\theta^{n-k}(\theta^{\lam})^{k}.
$$

\item When $n$ is even, we consider the following quasi-character of $W_{F}$,
$$
\chi|_{W_{E}} = 
\theta^{n/2}(\theta^{\lam})^{n/2}\circ a_{E}, 
\quad 
\chi(\wt{\lam}) = 
\theta^{n/2}\circ a_{E}(\lam_{E}).
$$
Then $\Sym^{n}\circ \vphi= \bigoplus_{k=0}^{n/2}\vphi_{k}
\bigoplus \chi$.

We refine the description for $\chi$ as follows.

Recall the following commutative diagram from local class field theory 
\cite{MR0220701}
$$
\xymatrix{
W_{E}\ar[d] \ar[r]^{a_{E}}& E^{\times}\ar[d]^{\RN_{E/F}}	\\	
W_{F} \ar[r]^{a_{F}} & F^{\times}
}
$$
It turns out that as a quasi-character of $F^{\times}$, $\chi$ is actually equal to the restriction of $(\theta)^{n/2}$ to $F^{\times}$. 

More precisely, after composing with $a_{F}$ the quasi-character $(\theta)^{n/2}$ of $F^{\times}$ can be viewed as a quasi-character of $W_{F}$. From the commutativity of the diagram, for any $w_{e}\in W_{E}$,
\begin{align*}
&(\theta)^{n/2}\circ a_{F}(w_{e}) = 
(\theta)^{n/2}(\RN_{E/F}\circ(a_{E}(w_{e})))
\\
=
&(\theta)^{n/2}\circ a_{E}(w_{e})
(\theta^{\lam})^{n/2}\circ (a_{E}(w_{e}))
=\chi(w_{e}),
\end{align*}
and 
$$
\chi(\wt{\lam}) = 
(\theta)^{n/2}\circ a_{E}(\lam_{E}) =
(\theta^{n/2})(\RN_{E/F}\circ a_{E}(\lam_{E}))
=
(\theta)^{n/2}
\circ a_{F}(\wt{\lam}).
$$
Therefore $\chi = (\theta)^{n/2}$ as a quasi-character of $F^{\times}$.
\end{itemize}

Now we make the following observation. For any pair of non-negative integers $(a,b)$ with $a+b = n$ and $a>b\geq 0$,
$$
\Ind^{W_{F}}_{W_{E}}
\theta^{a}(\theta^{\lam})^{b}
=\Ind^{W_{F}}_{W_{E}}(\theta\theta^{\lam})^{b}\theta^{a-b}.
$$
The quasi-character $\theta\theta^{\lam}$ is invariant under the action of $\Gal(E/F)$, and it can be written as $\theta\circ \RN_{E/F}$ where we abuse the notation viewing $\theta$ as a quasi-character of $F^{\times}$ via restriction. Therefore from \cite[34.4]{gl2llc} the following isomorphism holds
$$
\Ind^{W_{F}}_{W_{E}}(\theta\circ \RN_{E/F})^{b}\theta^{a-b}
\simeq (\theta)^{b}\otimes \Ind^{W_{F}}_{W_{E}}\theta^{a-b}.
$$
Here again when taking tensor product we view $\theta$ as a quasi-character of $F^{\times}$ via restriction.

Therefore the following description holds
$$
\Sym^{n}\circ \vphi = 
\bigg\{
\begin{matrix}
\bigoplus_{k=1}^{n/2}
(
\theta^{(n-2k)/2}\otimes \Ind^{W_{F}}_{W_{E}}\theta^{2k} 
)
\bigoplus (\theta)^{n/2} &\text{ if $n$ is even},\\
\bigoplus_{k=1}^{(n+1)/2}
\theta^{(n-2k+1)/2}\otimes \Ind^{W_{F}}_{W_{E}}
\theta^{2k-1}
, & \text{ if $n$ is odd}.
\end{matrix}
$$
From \cite[\S 3]{MR0499005} the associated irreducible admissible representation for $\Sym^{n}\circ \vphi$ is fully induced, which is exactly the one appearing in the statement.
\end{proof}

\begin{rmk}\label{rmk:similarpadic}
Comparing with the decomposition of the discrete series representations in Lemma \ref{functorial:real}, we can observe that the description of functorial images in the $p$-adic case has strong similarities with the Archimedean case, where we use the fact that for an admissible pair $(E/F,\theta)$, the supercuspidal representation $\pi_{\theta}$ has central character $\theta$ \cite[19.1.1]{gl2llc}. Unfortunately we are not able to unify the twisted Steinberg representations.
\end{rmk}

\begin{rmk}\label{rmk:non-admissible}
When $\theta = \theta^{\lam}$, i.e. $\theta =\xi\circ \RN_{E/F}$ for some quasi-character $\xi$ of $F^{\times}$, following identification holds
$$
\Ind_{W_{F}}^{W_{E}}\theta = \Ind_{W_{E}}^{W_{F}}\xi\circ \RN_{E/F} = \xi\otimes\Ind^{W_{F}}_{W_{E}}1,
$$
where $1$ is denoted as the trivial representation of $W_{E}$. In particular, the representation
$
\Ind^{W_{F}}_{W_{E}}1
$
can be written as the direct sum of two characters of $W_{F}$,
$$
1\oplus \sgn_{E/F},
$$
where $\sgn_{E/F}$ is the quadratic character of $F^{\times}$ associated to $E$.

Therefore the irreducible admissible representation of $\GL_{2}(F)$ associated to $\Ind^{W_{F}}_{W_{E}}\theta$ is the irreducible principal series representation
$$
\xi\otimes \Ind^{\GL_{2}}_{B_{2}}
(1\otimes \sgn_{E/F}).
$$
\end{rmk}

\section{Stable transfer}\label{sec:stabletransfer}

In this section, we are going to establish Theorem \ref{thm:characterrelation}, which is the main result of the paper. From Theorem \ref{thm:characterrelation} the construction of the invariant distribution $\Theta^{\rho}$ claimed in Corollary \ref{cor:distributiongroup} and Theorem \ref{thm:distributioncomplex} are not hard to derive following the discussion in the introduction. We divide the proof into three parts, depending on whether the tempered representations are principal series, discrete series in the real case, or supercuspidal representations in the $p$-adic case.

\subsection{Principal series}\label{subsec:ps}
In this section we treat the tempered principal series representations.

\subsubsection{Explicit formula}
Let $F$ be any local field of characteristic zero. For any two unitary characters $\chi_{1},\chi_{2}$ of $F^{\times}$, let $\pi = \Ind^{\GL_{2}}_{B_{2}}(\chi_{1},\chi_{2})$ be the tempered principal series representation of $\GL_{2}(F)$. From Lemma \ref{functorial:complex}, the lifting along $\rho = \Sym^{n}$ of $\pi$ from $\GL_{2}(F)$ to $\GL_{n+1}(F)$, which is denoted as $\Pi$, is isomorphic to the induced representation $\Ind_{B_{n+1}}^{\GL_{n+1}}(\chi^{n}_{1},\chi^{n-1}_{1}\chi_{2},...,\chi^{n}_{2})$. From the character formula for fully induced representations \cite[Theorem~3]{Gv72} \cite[(10.27)]{MR855239}, both $\tr_{\pi}$ and $\tr_{\Pi}$ are determined by their restrictions on $T_{2}(F)^{\rss}$ and $T_{n+1}(F)^{\rss}$, and the explicit formulas are given as follows,
\begin{align*}
D^{1/2}_{\GL_{2}}(t)\tr_{\pi}(t) = &\sum_{s\in S_{2}}
\chi_{1}(t_{s(1)})\chi_{2}(t_{s(2)}), \quad t= \diag(t_{1},t_{2})\in T_{2}^{\rss},
\\
D^{1/2}_{\GL_{n+1}}(T)\tr_{\Pi}(T) = &\sum_{s\in S_{n+1}}\chi^{n}_{1}(T_{s(1)})\chi^{n-1}_{1}\chi_{2}(T_{s(2)})...\chi^{n}_{2}(T_{s(2)}), 
\\
&T=  \diag(T_{1},...,T_{n+1})\in T^{\rss}_{n+1}.
\end{align*}
Here $S_{n}$ is the permutation group of the set $\{1,2,...,n \}$. On the other hand, the following identity holds,
$$
\sum_{s\in S_{n+1}}\chi^{n}_{1}(T_{s(1)})\chi^{n-1}_{1}\chi_{2}(T_{s(2)})...\chi^{n}_{2}(T_{s(2)})
=
\sum_{s\in S_{n+1}}
\chi_{1}(\prod_{k=1}^{n+1}T^{n+1-k}_{s(k)})\chi_{2}(\prod_{k=1}^{n+1}T^{k-1}_{s(k)}).
$$
It follows from straightforward computation that the distribution $\Theta^{\rho}$ defined in Theorem \ref{thm:distributioncomplex} yields the distributional identity for any $\pi$ a tempered principal series of $\GL_{2}(F)$.
$$
D^{1/2}_{\GL_{n+1}}(g)
\tr_{\Pi}(g) = \int_{\GL_{2}(F)}
\Theta^{\rho}(g,\gam)D^{1/2}_{\GL_{2}}
(\gam)\tr_{\pi}(\gam), \quad (g,\gam)\in \GL_{n+1}^{\rss}(F)\times \GL_{2}(F)^{\rss}.
$$
To make a brief summary, we recall the statement of Theorem \ref{thm:distributioncomplex} in the following.

\begin{thm}\label{thm:distributioncomplex:content}
Define the distribution $\Theta^{\rho}$ on $\GL_{n+1}(F)^{\rss}\times \GL_{2}(F)^{\rss}$ as follows. It is invariant under the conjugation action of $\GL_{n+1}(F)\times \GL_{2}(F)$, and its formula on the diagonal maximal split torus is given by
\begin{equation}\label{eq:distributiontorus}
\Theta^{\rho}(T,t) = 
\frac{1}{2}
\sum_{w\in S_{n+1}}\del_{\prod_{k=1}^{n+1}T^{n+1-k}_{w(k)}}(t_{1})\otimes \del_{\prod_{k=1}^{n+1}T^{k-1}_{w(k)}}(t_{2}),
\end{equation}
where $T = (T_{1},...,T_{n+1})$ (resp. $t = (t_{1},t_{2})$) is a diagonal regular semi-simple element in $\GL_{n+1}(F)$ (resp. $\GL_{2}(F)$), $\del_{*}(*)$ is the delta distribution, and $S_{n+1}$ is the permutation group of the set $\{1,...,n+1 \}$.

Then the following distributional identity holds
$$
D^{1/2}_{\GL_{n+1}}(g) 
\tr_{\Ind_{B_{n+1}}^{\GL_{n+1}}(\chi^{n}_{1},\chi^{n-1}_{1}\chi_{2},...,\chi^{n}_{2})}
(g) 
=\int_{\GL_{2}(F)}
\Theta^{\rho}(g,\gam)
D^{1/2}_{\GL_{2}}(\gam)\tr_{\Ind^{\GL_{2}}_{B_{2}}(\chi_{1},\chi_{2})}(\gam)
$$
for any $(g,\gam)\in \GL_{n+1}(F)^{\rss}\times \GL_{2}(F)^{\rss}$.
\end{thm}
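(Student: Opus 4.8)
The plan is to verify the distributional identity directly by evaluating both sides on the regular semi-simple set and reducing everything to the diagonal maximal split torus, where the characters are given by explicit Weyl-sum formulas. First I would recall the character formula for fully induced representations \cite[Theorem~3]{Gv72}, \cite[(10.27)]{MR855239}: for a tempered principal series $\Ind^{\GL_{m}}_{B_{m}}(\mu_{1},\dots,\mu_{m})$, the distribution $D^{1/2}_{\GL_{m}}\tr$ is supported on conjugates of $T_{m}(F)^{\rss}$ and on $t=\diag(t_{1},\dots,t_{m})$ equals $\sum_{s\in S_{m}}\prod_{i}\mu_{i}(t_{s(i)})$. Applying this with $m=2$ and $(\mu_{1},\mu_{2})=(\chi_{1},\chi_{2})$, and with $m=n+1$ and $(\mu_{1},\dots,\mu_{n+1})=(\chi_{1}^{n},\chi_{1}^{n-1}\chi_{2},\dots,\chi_{2}^{n})$, gives the two Weyl-sum expressions displayed in the excerpt. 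The key algebraic identity, which I would record as a lemma or simply note, is that re-bracketing the product $\prod_{k=1}^{n+1}(\chi_{1}^{n+1-k}\chi_{2}^{k-1})(T_{s(k)})$ yields $\chi_{1}\bigl(\prod_{k}T_{s(k)}^{n+1-k}\bigr)\chi_{2}\bigl(\prod_{k}T_{s(k)}^{k-1}\bigr)$, using only multiplicativity of $\chi_{1},\chi_{2}$.

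Next I would compute the right-hand side integral against $\Theta^{\rho}$. Because $\Theta^{\rho}$ is, by construction, invariant under $\GL_{n+1}(F)\times\GL_{2}(F)$-conjugation and supported on the (conjugates of the) split torus, and because $D^{1/2}_{\GL_{2}}\tr_{\pi}$ is itself an invariant locally integrable function, the integral $\int_{\GL_{2}(F)}\Theta^{\rho}(g,\gam)\,D^{1/2}_{\GL_{2}}(\gam)\tr_{\pi}(\gam)$ reduces — by the Weyl integration formula and the invariance — to evaluating at $g=T\in T_{n+1}(F)^{\rss}$ and integrating the delta distributions in \eqref{eq:distributiontorus} against the torus character. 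Plugging \eqref{eq:distributiontorus} in, the two delta factors $\del_{\prod_{k}T_{w(k)}^{n+1-k}}(t_{1})\otimes\del_{\prod_{k}T_{w(k)}^{k-1}}(t_{2})$ collapse the $\gam$-integral to the point $t=(t_{1},t_{2})=(\prod_{k}T_{w(k)}^{n+1-k},\prod_{k}T_{w(k)}^{k-1})$, producing $\frac{1}{2}\sum_{w\in S_{n+1}}\bigl(D^{1/2}_{\GL_{2}}\tr_{\pi}\bigr)(t)$ evaluated there; and $\bigl(D^{1/2}_{\GL_{2}}\tr_{\pi}\bigr)(t)=\sum_{s\in S_{2}}\chi_{1}(t_{s(1)})\chi_{2}(t_{s(2)})=\chi_{1}(t_{1})\chi_{2}(t_{2})+\chi_{1}(t_{2})\chi_{2}(t_{1})$. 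The factor $\frac{1}{2}$ and the two terms of the $S_{2}$-sum combine: the $w$ and $w\cdot(\text{reversal})$ contributions, together with the two terms from $S_{2}$, repackage exactly into $\sum_{s\in S_{n+1}}\chi_{1}(\prod_{k}T_{s(k)}^{n+1-k})\chi_{2}(\prod_{k}T_{s(k)}^{k-1})$, which by the re-bracketing identity equals $D^{1/2}_{\GL_{n+1}}(T)\tr_{\Pi}(T)$. This matches the left-hand side, and by invariance the identity extends from $T_{n+1}(F)^{\rss}$ to all of $\GL_{n+1}(F)^{\rss}$ (and to elliptic $g$, where both sides vanish since $\Theta^{\rho}$ is supported on split-torus conjugates, consistent with $\tr_{\Pi}$ on a fully induced representation vanishing on elements not conjugate into the inducing torus).

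A few technical points need care. One must check that the pushforward/pullback of the delta distributions under the regular semi-simple parametrization is well defined, i.e. that the map $T\mapsto(\prod T_{w(k)}^{n+1-k},\prod T_{w(k)}^{k-1})$ carries $T_{n+1}(F)^{\rss}$ into $\GL_{2}(F)^{\rss}$ generically and that the resulting expression is a genuine distribution pairing against the locally integrable function $D^{1/2}_{\GL_{2}}\tr_{\pi}$ (this is where local integrability of characters on the regular set, cited from Harish-Chandra, is used). Second, the combinatorial bookkeeping of the $\tfrac12\,|S_{n+1}|\cdot|S_{2}|$ terms against the $|S_{n+1}|$ terms on the left needs the observation that reversing $k\mapsto n+2-k$ in the index and swapping the roles of $\chi_{1},\chi_{2}$ is an involution pairing up terms two-to-one, which is precisely why the normalization $\tfrac12$ appears; I would spell this bijection out explicitly. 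I expect the main obstacle to be not any deep input but rather making the ``integrate a delta distribution in the $\GL_{2}$ variable against an invariant distribution on the group'' step rigorous — i.e. justifying that the formal manipulation on the torus genuinely computes the invariant distribution $\Theta^{\rho}$ paired with $D^{1/2}_{\GL_{2}}\tr_{\pi}$, using the Weyl integration formula and the fact that both objects are supported on and determined by their restrictions to the regular semi-simple locus. Since the excerpt already grants the character formulas of \cite{Gv72} and \cite{MR855239} and the local integrability of characters, this reduces to a careful but routine verification, and the proof will be short.
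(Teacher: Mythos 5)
Your proposal is correct and follows essentially the same route as the paper: invoke the induced-character formula of \cite{Gv72} and \cite{MR855239} on the diagonal tori, re-bracket $\prod_{k}(\chi_{1}^{n+1-k}\chi_{2}^{k-1})(T_{s(k)})$ as $\chi_{1}(\prod_{k}T_{s(k)}^{n+1-k})\chi_{2}(\prod_{k}T_{s(k)}^{k-1})$, and then collapse the delta distributions against the $\GL_{2}$ character. The paper leaves the final step as a ``straightforward computation''; your explicit pairing of the $S_{2}$-sum with the reversal involution on $S_{n+1}$, which accounts for the normalization $\tfrac12$, is exactly the bookkeeping that computation requires.
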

We note that $\Theta^{\rho}$ can also be viewed as the parabolic induction of the distribution (\ref{eq:distributiontorus}) from $T_{n+1}(F)\times T_{2}(F)$ to $\GL_{n+1}(F)\times \GL_{2}(F)$ following the notation in \cite[(3.4.2)]{beuzart2015local}.

\subsubsection{Reduction formula}

On the other hand, as we have already seen in the introduction, with the character formula for fully induced representations \cite[Theorem~3]{Gv72} \cite[(10.27)]{MR855239}, for any tempered representation $\pi$ of $\GL_{2}(F)$, if we denote the lifting of $\pi$ along $\rho = \Sym^{n}$ by $\Pi$, except the twisted Steinberg representations when $F$ is a $p$-adic field, the character of $\Pi$ is always supported on the $\GL_{n+1}(F)$-conjugacy classes of regular semi-simple elements in $M_{n+1}(F)$, which in turn is determined by the character formula of $\pi_{M_{n+1}}$.
Therefore we may first determine the stable transfer factor from any tempered representation $\pi$ of $\GL_{2}(F)$ to $\pi_{n+1}$ of $M_{n+1}(F)$.

From Remark \ref{rmk:complex} and \ref{rmk:principalseries}, we have
$$
\pi_{M_{n+1}} =
\bigg\{
\begin{matrix}
\big\{
\bigotimes_{k=1}^{n/2}
\big((\chi_{1}\chi_{2})^{(n-2k)/2}
\otimes \Ind_{B_{2}}^{\GL_{2}}(\chi^{2k}_{1},\chi^{2k}_{2})
\big) \big\}\bigotimes (\chi_{1}\chi_{2})^{n/2}, & \text{ if $n$ is even},\\
\bigotimes_{k=1}^{(n+1)/2}\big(
(\chi_{1}\chi_{2})^{(n-2k+1)/2}
\otimes \Ind_{B_{2}}^{\GL_{2}}(\chi^{2k-1}_{1},\chi^{2k-1}_{2})
\big) , & \text{ if $n$ is odd}.
\end{matrix}.
$$

On the other hand, the character of $\pi$ is supported on the $\GL_{2}(F)$-conjugacy classes of the maximal split torus $T_{2}(F)$ inside $\GL_{2}(F)$ given by
$$
D^{1/2}_{\GL_{2}}(t)\tr_{\pi}(t) = \sum_{s\in W_{2}}\chi_{1}(t_{s(1)})\chi_{2}(t_{s(2)}), \quad t= \diag(t_{1},t_{2})\in T_{2}(F)^{\rss}.
$$
It follows that for any positive integer $k\in \BN$, 
$$
D_{\GL_{2}}^{1/2}(t)\tr_{\Ind^{\GL_{2}}_{B_{2}}(\chi_{1}^{k},\chi_{2}^{k})}(t)
=D_{\GL_{2}}^{1/2}(t^{k})\tr_{\Ind^{\GL_{2}}_{B_{2}}
(\chi_{1},\chi_{2})}(t^{k}), \quad t= \diag(t_{1},t_{2})\in T_{2}(F)^{\rss}.
$$
Since both sides of the equality are invariant under the conjugation action of $\GL_{2}(F)$, the desired distributional equality claimed in Theorem \ref{thm:characterrelation} holds,
$$
D^{1/2}_{\GL_{2}}(\gam^{k})
\tr_{\Ind^{\GL_{2}}_{B_{2}}(\chi_{1},\chi_{2})}
(\gam^{k}) = D^{1/2}_{\GL_{2}}(\gam)
\tr_{\Ind_{B_{2}}^{\GL_{2}}(\chi^{k}_{1},\chi^{k}_{2})}(\gam), \quad \gam\in \GL_{2}(F)^{\rss}.
$$
\begin{rmk}\label{rmk:zeroomit}
It can happen that for some element $g\in \GL_{2}(F)^{\rss}$ which does not lie in the $\GL_{2}(F)$-conjugacy class of $T_{2}(F)$, $g^{k}$ lies in $T_{2}(F)^{\rss}$ for some $k$. A typical example is when $k=2$ and $F=\BR$. Let $g=  
\left(
\begin{matrix}
\frac{1}{\sqrt{2}} & \frac{1}{\sqrt{2}} \\
-\frac{1}{\sqrt{2}} & \frac{1}{\sqrt{2}}
\end{matrix}
\right).$ Then 
$g^{2} = 
\left(
\begin{matrix}
0	& 1\\
1   & 0
\end{matrix}
\right)
$
is diagonalizable over $\BR$ with eigenvalues $1$ and $-1$. But it won't affect the distribution that we are considering. This is because these elements have measure zero in $\GL_{2}(F)^{\rss}$ since up to conjugation $g^{k}$ lies in the intersection of the $\GL_{2}(F)$-conjugacy classes of split torus $T_{2}(F)$ and the elliptic toruses, whose $\GL_{2}(F)$-conjugacy classes have measure zero through comparing their eigenvalues. The same discussion applies to other cases below.
\end{rmk}

It follows from straightforward computation that the distribution $\Theta^{M_{n+1}}$ constructed in Corollary \ref{cor:reducetoDel} based on Assumption \ref{assum} yields the lifting of the distribution character of $\pi$ to that of $\pi_{M_{n+1}}$ for any $\pi$ a tempered principal series.

\begin{thm}\label{lift:principal}
When $n$ is odd, define the distribution $\Theta^{M_{n+1}}$ as follows. For any element $(g_{1},...,g_{(n+1)/2})\in \GL_{2}(F)\times...\GL_{2}(F)^{\rss}$ and $h\in \GL_{2}(F)^{\rss}$, let
\begin{align*}
&\Theta^{M_{n+1}}
((g_{1},...,g_{(n+1)/2}),h) = 
\\
&\Theta^{\Del_{(n+1)/2}}
(\det(g_{1})^{(n-1)/2}\cdot g_{1},\det(g_{2})^{(n-3)/2}\cdot g_{2},..., \det(g_{(n+1)/2})\cdot g_{(n-1)/2}, g_{(n+1)/2},h).
\end{align*}
Then the following distributional identity holds
\begin{align*}
&D^{1/2}_{M_{n+1}}
(g_{1},...,g_{(n+1)/2})
\tr_{\pi_{M_{n+1}}}(g_{1},...,g_{(n+1)/2})
\\
&=\int_{\GL_{2}(F)^{\rss}}\Theta^{M_{n+1}}
(g_{1},...,g_{(n+1)/2},h)D^{1/2}_{\GL_{2}}(h)\tr_{\pi}(h)
\end{align*}
for any tempered principal series $\pi$ of $\GL_{2}(F)$.
\end{thm}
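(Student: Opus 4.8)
The plan is to obtain Theorem~\ref{lift:principal} as a formal consequence of three facts already in hand: the explicit shape of $\pi_{M_{n+1}}$ recorded in Remark~\ref{rmk:complex} and Remark~\ref{rmk:principalseries}; the principal-series case of the reduction formula just established, namely
\[
D_{\GL_{2}}^{1/2}(\gam^{k})\,\tr_{\Ind^{\GL_{2}}_{B_{2}}(\chi_{1},\chi_{2})}(\gam^{k}) = D_{\GL_{2}}^{1/2}(\gam)\,\tr_{\Ind^{\GL_{2}}_{B_{2}}(\chi^{k}_{1},\chi^{k}_{2})}(\gam)
\]
for every $k\in\BN$; and Assumption~\ref{assum} for the diagonal embedding $\Del_{(n+1)/2}$. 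Both sides of the asserted identity are invariant distributions given by locally integrable functions that are locally constant (resp.\ analytic) on the regular semisimple locus. Moreover, since $\pi$ and the tensor factors of $\pi_{M_{n+1}}$ are (twists of) representations fully induced from a Borel, $\tr_{\pi}$ vanishes off the split conjugacy classes of $\GL_{2}(F)$ and $\tr_{\pi_{M_{n+1}}}$ (as well as $\tr_{\bigotimes_{k}\pi}$) vanishes unless every block lies in a split conjugacy class. Hence it is enough to verify the identity on the dense open subset where each $g_{k}$ and each power $g_{k}^{2k-1}$ lies in $T_{2}(F)^{\rss}$; the remaining elements form a set of measure zero (cf.\ Remark~\ref{rmk:zeroomit}) and do not affect the distributional equality.

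On that subset I would expand $D^{1/2}_{M_{n+1}}\,\tr_{\pi_{M_{n+1}}}$ as the product over $k=1,\dots,(n+1)/2$ of the factors $D^{1/2}_{\GL_{2}}(g_{k})\,\tr_{(\chi_{1}\chi_{2})^{(n-2k+1)/2}\otimes\Ind^{\GL_{2}}_{B_{2}}(\chi^{2k-1}_{1},\chi^{2k-1}_{2})}(g_{k})$ and process one factor at a time. First peel off the central twist: since $\chi_{1}\chi_{2}$ is the central character of $\Ind^{\GL_{2}}_{B_{2}}(\chi_{1},\chi_{2})$ and the Weyl discriminant $D_{\GL_{2}}$ is insensitive to multiplication by a central scalar, the $k$-th factor equals
\[
(\chi_{1}\chi_{2})^{(n-2k+1)/2}(\det g_{k})\cdot D^{1/2}_{\GL_{2}}(g_{k})\,\tr_{\Ind^{\GL_{2}}_{B_{2}}(\chi^{2k-1}_{1},\chi^{2k-1}_{2})}(g_{k}).
\]
Now apply the reduction formula above with exponent $2k-1$, read from right to left, to rewrite $D^{1/2}_{\GL_{2}}(g_{k})\,\tr_{\Ind(\chi^{2k-1}_{1},\chi^{2k-1}_{2})}(g_{k})$ as $D^{1/2}_{\GL_{2}}(g_{k}^{2k-1})\,\tr_{\pi}(g_{k}^{2k-1})$, and then fold the scalar $(\chi_{1}\chi_{2})^{(n-2k+1)/2}(\det g_{k})$ back into the argument through the central character of $\pi$, using scalar-invariance of $D_{\GL_{2}}$ once more. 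The $k$-th factor then becomes $D^{1/2}_{\GL_{2}}(z_{k})\,\tr_{\pi}(z_{k})$ with $z_{k}=\det(g_{k})^{(n-2k+1)/2}\,g_{k}^{2k-1}$, i.e.\ exactly the $k$-th block of the element that the definition of $\Theta^{M_{n+1}}$ feeds into $\Theta^{\Del_{(n+1)/2}}$.

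It follows that the product over $k$ equals $D^{1/2}_{M_{n+1}}(m')\,\tr_{\bigotimes_{k}\pi}(m')$ for the block-diagonal element $m'\in M_{n+1}(F)$ with $k$-th block $z_{k}$. Applying Assumption~\ref{assum} at $m'$ turns this into $\int_{\GL_{2}(F)}\Theta^{\Del_{(n+1)/2}}(m',h)\,D^{1/2}_{\GL_{2}}(h)\,\tr_{\pi}(h)$, and by the definition of $\Theta^{M_{n+1}}$ the latter is precisely $\int_{\GL_{2}(F)}\Theta^{M_{n+1}}(g_{1},\dots,g_{(n+1)/2},h)\,D^{1/2}_{\GL_{2}}(h)\,\tr_{\pi}(h)$, which closes the argument; the closed-form expression for $\Theta^{\rho}$ when $F=\BC$ in Theorem~\ref{thm:distributioncomplex} then drops out from the analogous split-torus computation with delta distributions. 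Since Assumption~\ref{assum} carries the analytic substance of the statement, the only real work here is the exponent bookkeeping in $\pi_{M_{n+1}}$ --- making the twisted, $(2k-1)$-powered copy of $\pi$ agree with the $k$-th tensor factor --- together with the passage from the split-torus identity to the distributional identity via the measure-zero argument of Remark~\ref{rmk:zeroomit}; I expect that last passage, rather than any new ingredient, to be the point that demands the most care.
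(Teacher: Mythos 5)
Your proposal is correct and follows exactly the route the paper intends: the paper's own ``proof'' of this theorem consists of the displayed character identity $D_{\GL_{2}}^{1/2}(t)\tr_{\Ind(\chi_{1}^{k},\chi_{2}^{k})}(t)=D_{\GL_{2}}^{1/2}(t^{k})\tr_{\Ind(\chi_{1},\chi_{2})}(t^{k})$ on the split torus, the measure-zero remark, and the sentence ``it follows from straightforward computation,'' and your factor-by-factor expansion of $\tr_{\pi_{M_{n+1}}}$, absorption of the central twist, and appeal to Assumption \ref{assum} is precisely that computation made explicit.

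One caveat: your derivation correctly produces the blocks $z_{k}=\det(g_{k})^{(n-2k+1)/2}\,g_{k}^{2k-1}$, but you then assert these are ``exactly'' the arguments appearing in the stated definition of $\Theta^{M_{n+1}}$, which reads $\det(g_{k})^{(n-2k+1)/2}\cdot g_{k}$ with no power on $g_{k}$ (and with a garbled index in the penultimate slot). For $k\geq 2$ these do not literally agree: with the definition as printed, Assumption \ref{assum} would return $D^{1/2}_{\GL_{2}}(g_{k})\tr_{\Ind(\chi_{1},\chi_{2})}(g_{k})$ in the $k$-th slot rather than the required $D^{1/2}_{\GL_{2}}(g_{k})\tr_{\Ind(\chi_{1}^{2k-1},\chi_{2}^{2k-1})}(g_{k})$. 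Your computation is the correct one, so this is a typo in the statement (and in Corollary \ref{cor:reducetoDel}) rather than an error in your argument, but you should say so explicitly instead of claiming the expressions coincide.
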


\begin{rmk}\label{rmk:evenprincipal}
There is a similar formula when $n$ is even that is also straightfoward to see. More precisely, for any element $(g_{1},...,g_{n/2},a)\in \GL_{2}(F)^{\rss}\times...\times \GL_{2}(F)^{\rss}\times \GL_{1}(F)$ and $h\in \GL_{2}(F)$, let
\begin{align*}
&\Theta^{M_{n+1}}((g_{1},...,g_{n/2},a),h)
\\
=
&
\Theta^{\Del_{n/2}}(a\cdot \det(g_{1})^{(n-2)/2}\cdot g_{2}^{2},a\cdot \det(g_{1})^{(n-4)/2}\cdot g_{2}^{4},...,a\cdot g_{2}^{2n},h).
\end{align*}
Here parallel to Assumption \ref{assum}, $\Theta^{\Del_{n/2}}$ is assumed to be the distribution giving rise to the identity (\ref{eq:langlandsconjecture}) for the $L$-morphism given by the diagonal embedding 
$\GL_{2}(\BC)\to (\GL_{2}(\BC))^{n/2}$.
Then the following distributional identity holds
\begin{align*}
&D^{1/2}_{M_{n+1}}
((g_{1},...,g_{n/2},a))
\tr_{\pi_{M_{n+1}}}((g_{1},...,g_{n/2},a))
\\
&=\int_{\GL_{2}(F)^{\rss}}\Theta^{M_{n+1}}
((g_{1},...,g_{n/2},a),h)D^{1/2}_{\GL_{2}}(h)\tr_{\pi}(h).
\end{align*}

Unfortunately, the formula for $\Theta^{M_{n+1}}$ only works for tempered principal series. More precisely, as we will see in later sections, for discrete series representations when $F=\BR$ or supercuspidal representations when $F$ is $p$-adic, one needs slight modifications for $\Theta^{M_{n+1}}$, which in particular does not give us a uniform formula for both tempered principal series representations and discrete series representations.
\end{rmk}

\subsection{Discrete series representations: Real case}\label{sub:discreteseries}\label{subsec:discreteseries}

Let $F=\BR$. For any $(l,t)\in \BN\times \BC$, let $\pi = \pi_{(l,t)}$ be the discrete series representation $D_{l}\otimes |\det(\cdot)|^{t}_{\BR}$ of $\GL_{2}(F)$. From Lemma \ref{functorial:real},
$$
\pi_{M_{n+1}} = 
\bigg\{
\begin{matrix}
\{
\bigotimes_{k=1}^{n/2}
\big((-1)^{l(n/2-k)}, (n-2k)t
)\otimes 
\pi_{(2kl, 2kt)}
\big)
\}
\bigotimes ((-1)^{nl/2},nt),  & \text{ if $n$ is even},
\\
\bigotimes_{k=1}^{(n+1)/2}
((-1)^{l(n-2k+1)/2}, (n-2k+1)t/2)\otimes 
\pi_{((2k-1)l, (2k-1)t)},  & \text{ if $n$ is odd}.\\
\end{matrix}
$$
Similar to the situation of tempered principal series, for any $k\in \BN$, we are going to find the relation between the distribution characters of $\pi_{(l,t)}$ and $\pi_{(kl,kt)}$.

We first recall the character formula for discrete series representation $D_{l}$ of $\GL_{2}$. Following \cite[\S 2]{knapp55local}, let $\SL^{\pm}_{2}(F)$ be the subgroup of $\GL_{2}(F)$ consisting of elements with determinant $\{\pm 1 \}$. Let $D_{l}^{+}$ be the holomorphic discrete series representation of $\SL_{2}(F)$. As a representation of $\SL^{\pm}_{2}(F)$,
$$
D_{l} = \Ind^{\SL_{2}(F)^{\pm}}_{\SL_{2}(F)}D^{+}_{l}.
$$
The restriction of $D_{l}$ to $\SL_{2}(F)$ decomposes as the direct sum of the holomorphic discrete series $D^{+}_{l}$ and the anti-holomorphic discrete series $D^{-}_{l}$. From \cite[2.19]{MR3117742} (or \cite[Corollary~10.13]{MR855239}) the following character formula holds for $D_{l}$ when restricted to $\SL_{2}(F)$,
\begin{align*}
&D^{\frac{1}{2}}_{\GL_{2}}(a(\alp))\tr_{D_{l}}(\pm a(\alp)) = (\pm 1)^{l}
(-2e^{-l\alp}), \quad \pm a(\alp) = \pm 
\left(
\begin{matrix}
e^{\alp}	&	\\
		&e^{-\alp}
\end{matrix}
\right), \alp>0,
\\
&D^{\frac{1}{2}}_{\GL_{2}}(s(\theta))\tr_{D_{l}}(s(\theta)) =
-(e^{il\theta} - e^{-il\theta}), \quad
s(\theta) = 
\left(
\begin{matrix}
\cos \theta & \sin \theta\\
-\sin \theta & \cos \theta
\end{matrix}
\right).
\end{align*}
On the other hand, any elements in $\SL^{\pm}_{2}(F)$ with negative determinant interchange $D_{l}^{+}$ and $D^{-}_{l}$. Therefore elements with negative determinant have trace zero. It follows that when viewing $D_{l}$ as a representation of $\GL_{2}(F)$, the distribution character of $D_{l}$ is supported only on elements with positive determinant, which also has trivial central character, and its value on $\SL_{2}(F)$ is given as above.

Therefore for any $k\in \BN$,
\begin{align*}
&D^{\frac{1}{2}}_{\GL_{2}}(a(\alp))\tr_{D_{kl}}(\pm a(\alp)) = (\pm 1)^{kl}(-2e^{-kl\alp})
=D^{\frac{1}{2}}_{\GL_{2}}((a(\alp))^{k})
\tr_{D_{l}}((\pm a(\alp))^{k})
,
\\
&D^{\frac{1}{2}}_{\GL_{2}}(s(\theta))
\tr_{D_{kl}}(s(\theta)) = 
-(e^{ikl\theta} - e^{-ikl\theta})
=D^{\frac{1}{2}}_{\GL_{2}}((s(\theta))^{k})
\tr_{D_{l}}((s(\theta))^{k}).
\end{align*}
Again both sides are invariant under the conjugation action of $\GL_{2}(F)$, hence the following distributional identity holds for any $k\in \BN$ and $g\in \GL_{2}(F)^{\rss}$,
\begin{align*}
\bigg\{
\begin{matrix}
D^{1/2}_{\GL_{2}}(g)\tr_{D_{kl}}(g) = D^{1/2}_{\GL_{2}}(g^{k})\tr_{D_{l}}(g^{k}), & \text{ when $k$ is odd,}
\\
D^{1/2}_{\GL_{2}}(g)\tr_{D_{kl}}(g) = D^{1/2}_{\GL_{2}}(g^{k})\tr_{D_{l}}(g^{k})\frac{\sgn(\det g)+1}{2}, & \text{ when $k$ is even}.
\end{matrix}
\end{align*}
Here $\sgn$ is the sign character of $F^{\times}$.
\begin{rmk}\label{rmk:discreteseriescoincidence}
The reason for the additional factor $\frac{\sgn(\det g)+1}{2}$ when $k$ is even is that, when $\det g$ is negative, $\tr_{D_{kl}}(g)$ is equal to $0$, but $g^{k}$ has positive determinant, which in particular implies that $\tr_{D_{l}}(g^{k})$ is nonzero.
\end{rmk}

Combing with Remark \ref{rmk:parallel} concerning the central characters appearing in $\pi_{M_{n+1}}$, for any $\pi$ a discrete series representation of $\GL_{2}(F)$, the parallel formula appearing in Theorem \ref{lift:principal} yields the lifting of the distribution character from $\pi$ to $\pi_{M_{n+1}}$.

\begin{thm}\label{lift:discrete}
When $n$ is odd, for any element $(g_{1},...,g_{(n+1)/2})\in \GL_{2}(F)^{\rss}\times...\times \GL_{2}(F)^{\rss}$ and $h\in \GL_{2}(F)^{\rss}$, recall the distribution $\Theta^{M_{n+1}}$ defined in Theorem \ref{thm:distributioncomplex:content}. Then the following distributional identity holds
\begin{align*}
&D^{1/2}_{M_{n+1}}
(g_{1},...,g_{(n+1)/2})
\tr_{\pi_{M_{n+1}}}(g_{1},...,g_{(n+1)/2})
\\
=
&\int_{\GL_{2}(F)^{\rss}}\Theta^{M_{n+1}}
(g_{1},...,g_{(n+1)/2},h)D^{1/2}_{\GL_{2}}(h)\tr_{\pi}(h)
\end{align*}
for $\pi$ a tempered discrete series representation of $\GL_{2}(F)$.
In particular, when $n$ is odd, we obtain a uniform formula giving rise to the lifting of distribution characters of tempered representations of $\GL_{2}(F)$ to $M_{n+1}(F)$.
\end{thm}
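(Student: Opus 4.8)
The plan is to reduce Theorem~\ref{lift:discrete} to the already-established tempered principal series case (Theorem~\ref{lift:principal}) together with the character identity for discrete series established just above. The overall strategy parallels the principal series argument: first reduce the functorial transfer on $\GL_{n+1}(F)$ to a transfer on the Levi factor $M_{n+1}(F)$ via the character formula for fully induced representations \cite[Theorem~3]{Gv72}\cite[(10.27)]{MR855239} and Lemma~\ref{functorial:real}, which is legitimate here since the discrete series functorial lift is fully induced; then establish the required distributional identity on $M_{n+1}(F)\times\GL_2(F)$ by treating each tensor factor $((-1)^{l(n-2k+1)/2},(n-2k+1)t/2)\otimes\pi_{((2k-1)l,(2k-1)t)}$ separately.

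First I would recall from Remark~\ref{rmk:parallel} that $\pi_{(l,t)}=D_l\otimes|\det(\cdot)|_\BR^t$ has central character $((-1)^l,2t)$ evaluated at scalar matrices, so the twisting character $((-1)^{l(n-2k+1)/2},(n-2k+1)t/2)$ appearing in the $k$-th tensor slot is precisely the $((n-2k+1)/2 \cdot \tfrac12)$-adjusted power of the central character of $\pi_{(l,t)}$, exactly as the factor $\det(g_k)^{(n+1-2k)/2}$ in the definition of $\Theta^{M_{n+1}}$ in Corollary~\ref{cor:reducetoDel} and Theorem~\ref{lift:principal} is designed to produce. So the shift $g_k\mapsto\det(g_k)^{(n+1-2k)/2}\cdot g_k$ correctly converts $\tr_{\pi_{((2k-1)l,(2k-1)t)}}$ together with its central twist into the $(2k-1)$-st Adams-type power of $\tr_{\pi_{(l,t)}}$; this is where the explicit form of $\Theta^{M_{n+1}}$ inherited from the principal series case is reused verbatim. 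The remaining input is the identity
$$
D^{1/2}_{\GL_2}(g)\tr_{\pi_{((2k-1)l,(2k-1)t)}}(g)=D^{1/2}_{\GL_2}(g^{2k-1})\tr_{\pi_{(l,t)}}(g^{2k-1}),\quad g\in\GL_2(F)^{\rss},
$$
which is exactly the odd-$k$ case of the boxed character identity proved above (with the sign-of-determinant factor absent because $2k-1$ is odd for all $k$ in the range $1\le k\le(n+1)/2$). It is crucial that $n$ odd forces all exponents $2k-1$ to be odd, so the subtle even-exponent correction factor $\tfrac{\sgn(\det g)+1}{2}$ from Remark~\ref{rmk:discreteseriescoincidence} never enters, and one genuinely gets a clean uniform formula.

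Assembling these pieces, I would write $D^{1/2}_{M_{n+1}}(g_1,\dots,g_{(n+1)/2})\tr_{\pi_{M_{n+1}}}$ as a product over $k$ of $D^{1/2}_{\GL_2}(\det(g_k)^{(n+1-2k)/2}g_k)\tr_{\pi_{(l,t)}}(\cdots)$ using the character formula for tensor-product (external) representations of $M_{n+1}(F)\simeq\prod\GL_2(F)\times\GL_1(F)$ with $N_{n+1}$ acting trivially, then apply the character identity factorwise exactly as in the derivation preceding Theorem~\ref{lift:principal}, and finally invoke Assumption~\ref{assum} through $\Theta^{\Del_{(n+1)/2}}$ to collapse the $(n+1)/2$-fold product back to a single integral against $D^{1/2}_{\GL_2}(h)\tr_\pi(h)$. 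Since both sides are $\GL_2(F)$-invariant and agree on the relevant tori, the passage from tori to all regular semisimple elements is the same measure-zero argument as in Remark~\ref{rmk:zeroomit}. The main obstacle is not any single computation but making sure the bookkeeping of the central-character twists and the odd exponents $2k-1$ matches the normalizations in Corollary~\ref{cor:reducetoDel} precisely; once that is pinned down, and once one observes that the support of $\tr_{D_{kl}}$ on positive-determinant elements is compatible with the support of $\tr_\pi$ pulled back along $g\mapsto g^{2k-1}$ (automatic in the odd case), the uniform statement for all tempered representations of $\GL_2(F)$ follows by combining Theorem~\ref{lift:principal} and the discrete series character relation.
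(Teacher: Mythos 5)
Your proposal is correct and follows essentially the same route as the paper: the paper likewise combines the odd-exponent case of the discrete series character identity $D^{1/2}_{\GL_2}(g)\tr_{D_{kl}}(g)=D^{1/2}_{\GL_2}(g^{k})\tr_{D_l}(g^{k})$ with the central-character bookkeeping of Remark \ref{rmk:parallel}, then reuses the distribution $\Theta^{M_{n+1}}$ of Theorem \ref{lift:principal} built on Assumption \ref{assum}, observing that $n$ odd keeps every exponent $2k-1$ odd so the $\tfrac{\sgn(\det g)+1}{2}$ correction never appears. (Only a cosmetic slip: for $n$ odd, $M_{n+1}\simeq\prod\GL_2$ with no $\GL_1$ factor.)
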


\begin{rmk}\label{rmk:forevenrealcasediscreteseries}
When $n$ is even, by straightforward computation, for any element $(g_{1},...,g_{n/2},a)\in \GL_{2}(F)^{\rss}\times...\times \GL_{2}(F)^{\rss}\times \GL_{1}(F)$ and $h\in \GL_{2}(F)^{\rss}$, let
\begin{align*}
\Theta^{M_{n+1}}_{D}((g_{1},...,g_{n/2},a),h)
=
\Theta^{M_{n+1}}(g_{1},...,g_{n/2},h)\prod_{k=1}\frac{\sgn(g_{k})+1}{2},
\end{align*}
where the distribution $\Theta^{M_{n+1}}$ is defined in Remark \ref{rmk:evenprincipal}, then the following distributional identity holds,
\begin{align*}
&D^{1/2}_{M_{n+1}}
((g_{1},...,g_{n/2},a))
\tr_{\pi_{M_{n+1}}}((g_{1},...,g_{n/2},a))
\\
&=\int_{\GL_{2}(F)^{\rss}}\Theta^{M_{n+1}}_{D}
((g_{1},...,g_{n/2},a),h)D^{1/2}_{\GL_{2}}(h)\tr_{\pi}(h)
\end{align*}
for $\pi$ a tempered discrete series representation of $\GL_{2}(F)$.
Unfortunately $\Theta^{M_{n+1}}_{D}\neq \Theta^{M_{n+1}}$ unless we restrict $g_{i}$ to $Z_{\GL_{2}}(F)\SL_{2}(F)$ for any $1\leq i\leq n/2$.
\end{rmk}

\subsection{Supercuspidal representations}\label{subsec:supercuspidal}

Let $F$ be a $p$-adic field with residue characteristic not equal to $2$. We are going to explore the parallel phenomenon for supercuspidal representations of $\GL_{2}(F)$.

As we have seen from the discussion of Section \ref{subsec:ps} and Section \ref{subsec:discreteseries}, for a supercuspidal representation $\pi_{\theta}$ of $\GL_{2}(F)$ associated to an admissible pair $(E/F,\theta)$, the key point is to explore the relation between the distribution characters of $\pi_{\theta}$ and $\pi_{\theta^{k}}$ for any $k\in \BN$. The main goal of the present section is to establish Theorem \ref{thm:characterrelation} for the case when $\pi$ is a supercuspidal representation of $\GL_{2}(F)$.

We first review some basics about the group $\GL_{2}(F)$.

Since we assume that $p\neq 2$, every maximal elliptic torus in $\GL_{2}(F)$ is always tamely ramified. Let $\mu_{F}\subset \CO^{\times}_{F}$ be the set of roots of unity lying within $F$ with generator $\eps = \eps_{F}$. Then the set of $\GL_{2}(F)$-conjugacy classes of maximal toruses in $\GL_{2}(F)$ is parametrized by the set $Q_{F} = \{1,\eps,\vpi,\eps\vpi \}$. Any quadratic \'etale algebra $E$ of $F$ is isomorphic to $E_{\alp}$ for some $\alp\in Q_{F}$ such that $E_{\alp}\simeq F(\sqrt{\alp})=F[x]/(x^{2}-\alp)$ for some $\alp\in Q_{F}$ with $\alp\neq 1$, or $E_{1} = F\times F$. For any $\alp\in Q_{F}$, there exists a maximal torus $T_{E_{\alp}}\subset \GL_{2}(F)$ such that $T_{E_{\alp}}\simeq \Res_{E_{\alp}/F}\GL_{1}$. We may pick up the following representative for $T_{E_{\alp}}$ with explicit matrix presentation described as follows. Set 
$$
T_{E_{\alp}}(F) = 
\{
\left(
\begin{matrix}
a & b\alp\\
b & \alp
\end{matrix}
\right)
|\quad a,b\in F, a\neq 0 \text{ or } b\neq 0
\}
$$
for $\alp\neq 1$, and 
$$
T_{E_{1}}(F) = 
\{
\left(
\begin{matrix}
a & 0\\
0 & b
\end{matrix}
\right)	
|\quad
a,b\in F^{\times}
\}.
$$
The matrix presentation ensures that the maximal compact subgroup of $T_{E_{\alp}}(F)$ is contained in $\GL_{2}(\CO_{F})$ for any $\alp\in Q_{F}$. We can verify by definition that the trace map $\mathrm{Tr}_{E_{\alp}/F}$ and norm map $\RN_{E_{\alp}/F}$ on $T_{E_{\alp}}\simeq E^{\times}_{\alp}$ can be identified with the trace and determinant map on $\GL_{2}(F)$ restricted to the maximal torus $T_{E_{\alp}}$.

Then we are going to recall the notion of depth defined for elements and representations of $\GL_{2}(F)$ (which are actually defined for more general reductive $p$-adic groups). 
Let $\CB(\GL_{2},F)$ be the Bruhat-Tits building of $\GL_{2}(F)$ defined in \cite[7.4]{MR0327923}. By \cite{MR1253198} \cite{MR1371680} there exists for each $x\in \CB(\GL_{2},F)$ a decreasing filtration $\{(\GL_{2})_{x,r} \}_{r\in \BR_{\geq 0}}$ of compact open subgroups of $\GL_{2}(F)$. The following definition is standard.

\begin{defin}\label{defin:depth}
For any $\alp\in Q_{F}$ and $\gam\in T_{E_{\alp}}(F)^{\rss}$ with compact image in $T_{E_{\alp}}(F)/Z_{\GL_{2}}(F)$, the depth $d(\gam)\in \BR_{\geq 0}\cup \{ \infty\}$ of $\gam$ is defined to be 
$$
d(\gam) = 
\max
\{
\{ 0\}
\cup 
\{
r\geq 0|\quad
\gam\in (\GL_{2})_{x,r} \text{ for some $x\in \CB(\GL_{2},F)$}
\}
\}
.
$$
\end{defin}

It turns out that there is a more refined notion of depth recalled below, in the sense that it takes the center of the group into account. The definition has already shown in \cite[Definition~3.5]{MR2798422} and \cite{MR2716688}.
\begin{defin}\label{defin:depthcenter}
For any $\alp\in Q_{F}$ and $\gam\in T_{E_{\alp}}(F)^{\rss}$ define the maximal depth $d^{+}(\gam)$ via
$$
d^{+}(\gam) = \max\{ d(z\gam)|\quad z\in Z_{\GL_{2}}(F) \}.
$$
\end{defin}
Note that in \cite[\S 5.3]{MR2716688}, $d^{+}(\gam)$ is denoted by $n(\gam)$.

For any $\alp\in Q_{F}$ a point $x_{\alp}\in \CB(\GL_{2},F)$ can be fixed. Following \cite[p.33]{MR2716688} the decreasing filtration $\{ (\GL_{2})_{x,r}\}_{r\in \BR_{\geq 0}}$ gives a natural filtration on $T_{E_{\alp}}(F)$. Such points are known to exist by \cite{MR2097545}. We will use the specific choices which appear in \cite[\S 5.1]{MR2798422}. For $\eps, 1\in Q_{F}$, set $x_{\eps}=  x_{1}$, both of which are denoted as $x_{0}$. The associated filtrations are defined via $(\GL_{2})_{x_{0},0} = \GL_{2}(\CO_{F})$ and 
$$
(\GL_{2})_{x_{0},r} = 1+\vpi^{\ceil{r}}\RM_{2}(\CO_{F})
$$
for $r>0$. For $\alp\in \{\vpi,\eps\vpi \}$, denote $x_{\alp}$ by $x_{I}$ (where $I$ means "Iwahori"). Then the associated filtrations are defined by
$$
(\GL_{2})_{x_{I},0} 
= \{
\left(
\begin{matrix}
a & b\\
c & d
\end{matrix}
\right)\in \GL_{2}(\CO_{F})
|\quad \ord(b)>0
 \}
$$
and 
$$
(\GL_{2})_{x_{I},r} = 
\left(
\begin{matrix}
1+\Fp^{\ceil{r}}_{F} & \Fp^{\ceil{r+\frac{1}{2}}}_{F}\\
\Fp^{\ceil{r-\frac{1}{2}}}_{F} & 1+\Fp^{\ceil{r}}_{F}
\end{matrix}
\right)
$$
for $r>0$.

We also recall the notion of depth for representations. From \cite{MR1371680}, for any admissible representation $(\pi,V_{\pi})$ of $\GL_{2}(F)$ there exists a unique non-negative real number $d(\pi)\in \BR_{\geq 0}$ that is minimal with the property that there exists $x\in \CB(G,F)$ such that $V^{(\GL_{2})_{x,d(\pi)^{+}}(F)}_{\pi}\neq \{ 0 \}$. The number $d(\pi)$ is called the depth of $\pi$. For an $F$-torus $T$ and $\theta$ a quasi-character of $T(F)$ the concept of depth for $\theta$ can also be introduced. We recall that $d(\theta)\in \BR_{\geq 0}$ is the smallest number for which $\theta|_{T_{d(\theta)^{+}}(F)}$ is trivial but $\theta|_{T_{d(\theta)}(F)}$ is non-trivial. Using the isomorphism $T_{E_{\alp}}\simeq E_{\alp}^{\times}$ the notion of depth for any quasi-characters of $E^{\times}_{\alp}$ can also be introduced.

Now we briefly recall the construction of supercuspidal representations of $\GL_{2}(F)$ which first appeared in \cite{MR492087}. For a tamely ramified quadratic extension $E/F$, a quasi-character $\theta$ of $E^{\times}$ is called \emph{admissible} if the following properties hold.
\begin{itemize}
\item $\theta$ does not factor through $\RN_{E/F}$;

\item If $\theta|_{1+\Fp_{E}}$ factors though $\RN_{E/F}$ then $E$ is unramified.
\end{itemize}
It is shown in \cite{MR492087} that for an admissible pair $(E/F,\theta)$ one can assign a supercuspidal representation $\pi_{\theta}$ of $\GL_{2}(F)$. From \cite{gl2llc} the supercuspidal representation $\pi_{\theta}$ has Langlands parameter $\Ind^{W_{F}}_{W_{E}}\theta$.

We establish the following lemma which is not hard to derive. Note that in the following we always assume that $n$ is coprime to $(q-1)q(q+1)$.

\begin{lem}\label{lem:stilladmissible}
For any admissible pair $(E/F,\theta)$, $(E/F,\theta^{n})$ is also an admissible pair and  $d(\theta) = d(\theta^{n})$.
\end{lem}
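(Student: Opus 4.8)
The plan is to verify the two claimed properties of the pair $(E/F,\theta^{n})$ directly from the definition of admissibility, exploiting the coprimality hypothesis $\gcd(n,(q-1)q(q+1))=1$ to control what happens modulo norms.

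First I would observe that, since $p\neq 2$, $E/F$ is tamely ramified, so the structure of $E^{\times}$ and of the norm subgroup $\RN_{E/F}(E^{\times})\subset F^{\times}$ is completely explicit: write $E^{\times}\simeq \vpi_{E}^{\BZ}\times \mu_{E}\times (1+\Fp_{E})$ where $\mu_{E}$ is the group of roots of unity of order prime to $p$ (cyclic of order $q_E-1$), and similarly for $F^{\times}$. Raising to the $n$-th power is an automorphism of $\mu_{E}$ precisely because $n$ is prime to $|\mu_{E}|$: in the unramified case $q_E-1 = q^{2}-1 = (q-1)(q+1)$ divides $(q-1)q(q+1)$, and in the ramified case $q_E = q$ so $q_E - 1 = q-1$ divides it; hence in both cases $\gcd(n,q_E-1)=1$. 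Likewise $n$ is prime to $p$, so $x\mapsto x^{n}$ is an automorphism of the pro-$p$ group $1+\Fp_{E}$. Consequently $\theta\mapsto \theta^{n}$ permutes the characters of $\mu_E$ and of $1+\Fp_E$ bijectively, and in particular $\theta^{n}|_{1+\Fp_{E}}$ is trivial iff $\theta|_{1+\Fp_{E}}$ is trivial, and $\theta^{n}|_{\mu_E}$ is trivial iff $\theta|_{\mu_E}$ is; this is exactly what yields $d(\theta^{n}) = d(\theta)$ once one recalls that the depth is read off from the smallest filtration step on which the restriction becomes trivial, and each filtration step $1+\Fp_E^{k}$ is stable under the $n$-th power map with the induced map again an automorphism.

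Next, for the first admissibility condition I must show $\theta^{n}$ does not factor through $\RN_{E/F}$. Suppose it did, say $\theta^{n} = \chi\circ \RN_{E/F}$ for a character $\chi$ of $F^{\times}$. The key is that $\RN_{E/F}(E^{\times})$ has index $2$ in $F^{\times}$ and, more importantly, the kernel of $\RN_{E/F}$ on $E^{\times}$ together with the $n$-th power map interact well: since $z\mapsto z^{n}$ is an automorphism of $E^{1} := \ker\RN_{E/F}$ — because $E^{1}$ is, up to the tame part which is cyclic of order dividing $q+1$ (unramified case) or order $1$ or $2$ (ramified), a pro-$p$ group, and $q+1 \mid (q-1)q(q+1)$ — any character trivial on $(E^{1})^{n} = E^{1}$ is trivial on $E^{1}$. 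So $\theta^{n}$ trivial on $E^1$ forces $\theta$ trivial on $E^1$, i.e. $\theta$ factors through $\RN_{E/F}$, contradicting admissibility of $(E/F,\theta)$. For the second admissibility condition, one argues entirely analogously with $1+\Fp_{E}$ in place of $E^{\times}$: if $\theta^{n}|_{1+\Fp_{E}}$ factors through $\RN_{E/F}$, then by the automorphism property of the $n$-th power map on $1+\Fp_E$ (and on $(1+\Fp_E)\cap E^1$) the same holds for $\theta|_{1+\Fp_{E}}$, whence $E$ is unramified by admissibility of the original pair.

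I expect the main obstacle to be bookkeeping rather than conceptual: one must be careful to package ``$\RN_{E/F}$ factors'' as ``trivial on the norm-one subgroup $E^{1}$'' and then check that the $n$-th power map is genuinely an automorphism of $E^{1}$ and of each relevant filtration subgroup. The one subtlety is the ramified case, where $E^{1}$ has a nontrivial $2$-torsion contribution and $\mu_E = \mu_F$ has order $q-1$; here one needs $\gcd(n, q-1) = 1$ and $\gcd(n,2)=1$ (which follows since $2\mid (q-1)q(q+1)$ as $q$ is odd), so that the tame and wild parts are both handled. Once these automorphism statements are in hand, both admissibility conditions and the depth equality follow formally; the coprimality hypothesis is used precisely and only to guarantee these automorphism properties.
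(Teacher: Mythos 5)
Your proposal is correct and follows essentially the same route as the paper: reduce both admissibility conditions to nontriviality of $\theta$ on the norm-one subgroup (resp.\ its intersection with $1+\Fp_{E}$), and then use that the coprimality of $n$ with $(q-1)q(q+1)$ makes the $n$-th power map an automorphism of $\mu_{E}$, of $1+\Fp_{E}$, of $\ker\RN_{E/F}$, and of each filtration step, which simultaneously gives the admissibility of $\theta^{n}$ and $d(\theta^{n})=d(\theta)$. The paper phrases the criterion via $\theta\neq\theta^{\lam}$ and proves ``trivial on $\ker\RN_{E/F}$ implies factoring through $\RN_{E/F}$'' with dual exact sequences, but this is only a cosmetic difference from your direct use of $E^{1}$.
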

\begin{proof}
We first treat the case when $E/F$ is unramified. Then $(E/F,\theta)$ is admissible if and only if $\theta^{\lam}\neq \theta$ for the nontrivial involution $\lam\in \Gal(E/F)$. It turns out that we only need to show $(\theta^{n})^{\lam}\neq \theta^{n}$.
To do so, we establish the following fact.
\begin{num}
\item $\theta$ is not admissible if and only if $\theta|_{\ker\RN_{E/F}}$ is trivial.
\end{num}
The only if part is immediate. We show the if part. If $\theta|_{\ker \RN_{E/F}}$ is trivial, then from the short exact sequence
$$
1\to \ker \RN_{E/F}\to E^{\times}\to \RN_{E/F}(E^{\times})\to 1, 
$$
we get the dual short exact sequence for their dual groups
$$
1\to \wh{\RN_{E/F}(E^{\times})}
\to \wh{E^{\times}}\to \wh{\ker \RN_{E/F}}\to 1.
$$
Since $\theta|_{\ker\RN_{E/F}}$ is trivial, it comes from some quasi-character $\phi$ of $\RN_{E/F}(E^{\times})$, i.e. $\theta = \phi\circ \RN_{E/F}$. But from the short exact sequence
$$
1\to \RN_{E/F}(E^{\times})\to F^{\times}\to \{\pm 1 \}\to 1,
$$
we have the following short exact sequence for their dual groups
$$
1\to \{ \pm 1\}\to \wh{F^{\times}}\to \wh{\RN_{E/F}(E^{\times})}\to 1.
$$
In particular any quasi-character of $\RN_{E/F}(E^{\times})$ lifts to a quasi-character of $F^{\times}$. Therefore we may assume that $\phi$ is a  quasi-character of $F^{\times}$. Hence $\theta$ is not admissible.

It turns out that we only need to show that whenever $\theta|_{\ker \RN_{E/F}}\neq 1$, we also have $\theta^{n}|_{\ker \RN_{E/F}}\neq 1$. Using the identification $E^{\times} = \mu_{E}\times \vpi^{\BZ}\times (1+\Fp_{E})$, we have $\ker \RN_{E/F}\subset \mu_{E}\times (1+\Fp_{E})$. But we have already assumed that $n$ is coprime to $(q-1)q(q+1)$, therefore raising to the $n$-th power gives an automorphism of the dual group of $\mu_{E}\times (1+\Fp_{E})$. Hence $\theta^{n}|_{\ker\RN_{E/F}}\neq 1$ and therefore the quasi-character $\theta^{n}$ is also admissible.

Then we treat the case when $E/F$ is ramified. By definition $\theta$ is admissible if and only if $\theta|_{1+\Fp_{E}}\neq \theta^{\lam}|_{1+\Fp_{E}}$. Again using the fact that $n$ is coprime to $(q-1)q(q+1)$, taking $n$-th power gives an automorphism of the dual group of $1+\Fp_{E}$. Therefore $\theta^{n}|_{1+\Fp_{E}}\neq (\theta^{n})^{\lam}|_{1+\Fp_{E}}$. It follows that the quasi-character $\theta^{n}$ is also admissible.

Finally we are going to establish the equality $d(\theta) = d(\theta^{n})$.

When $d(\theta) = 0$, $\theta$ is the inflation of a nontrivial character of the finite field $(\CO_{E}/\Fp_{E})^{\times}$, which, since $n$ is coprime to $(q-1)q(q+1)$, remains nontrivial after taking $n$-th power. Therefore $d(\theta^{n}) = 0$ as well.

When $d(\theta)>0$, raising to $n$-th power gives an automorphism of the dual group of $(1+\Fp_{E})$. Therefore $d(\theta^{n}) = d(\theta)$.

It follows that we have established the lemma.
\end{proof}
In particular $\pi_{\theta^{n}}$ is a supercuspidal representation of $\GL_{2}(F)$ with $d(\theta) = d(\theta^{n})$.

It turns out that there is a more general construction of tame supercuspidal representations for reductive $p$-adic groups \cite{MR1824988}. \cite[\S 3.5]{MR2431732} shows that the more general constructions in \cite{MR1824988} agree with the earlier work in \cite{MR492087}. From \cite[Remark~3.6]{MR1824988} the equality $d(\pi_{\theta}) = d(\theta)$ holds for any admissible pair $(E/F,\theta)$.  In general, for supercuspidal representation $\pi_{\theta}$ of positive depth $d(\pi_{\theta}) =r>0$, following \cite[p.35]{MR2716688} $\pi_{\theta}$ is compactly induced and is of the form 
$$
\pi_{\theta} = 
c\text{-}\Ind^{\GL_{2}(F)}_{T_{E_{\alp}}(\GL_{2})_{x_{\alp},r/2}}\rho_{\theta}
$$
where $\rho_{\theta}$ is an irreducible representation of $T_{E_{\alp}}(\GL_{2})_{\alp,r/2}$ defined as follows.
\begin{itemize}
\item
If $\Fg\Fl_{x_{\alp},r/2} = \Fg\Fl_{x_{\alp},(r/2)^{+}}$, then 
$$
\rho_{\theta} = \theta.
$$
From \cite[Lemma~3.2.1]{MR2716688}, $T_{E_{\alp}}$ normalizes $(\GL_{2})_{x_{\alp},t}$ for any appropriate $t$, hence the representation $\rho_{\theta}$ is well-defined.

\item
If 
$\Fg\Fl_{x_{\alp},r/2} \neq \Fg\Fl_{x_{\alp},(r/2)^{+}}$, let 
$r_{\theta}$ be the only irreducible component that occurs $\dim(r_{\theta})$ times inside $\Ind^{Z_{\GL_{2}}(F)T_{E_{\alp},0^{+}}(\GL_{2})_{x_{\alp},(r/2)}}_{Z_{\GL_{2}}(F)T_{E_{\alp},0^{+}}(\GL_{2})_{x_{\alp},(r/2)^{+}}}\theta$. The representation $r_{\theta}$ can be extended to an irreducible representation $\rho_{\theta}$ of $T_{E_{\alp}}(\GL_{2})_{x_{\alp},r/2}$.
\end{itemize}
Here we note that the torus $T_{E_{\alp}}$, which normalizes the group $(\GL_{2})_{\alp,r/2}$, might not be exactly the same as the explicit presentation introduced before. For more details one may consult \cite[p.34]{MR2716688}.

For those representations $\pi_{\theta}$ of depth zero, they are compact induction from the inflation to $\GL_{2}(\CO_{F})$ of cuspidal Deligne-Lusztig representations of $\GL_{2}(\CO_{F}/\Fp_{F})$ \cite[p.34]{MR2716688}. 

We establish the following lemma, which will be used in subsequent.

\begin{lem}\label{lem:formaldegreeinv}
With the above notations, $\deg(\pi_{\theta}) = \deg(\pi_{\theta})$, $\dim (\rho_{\theta^{n}}) = \dim (\rho_{\theta})$.
\end{lem}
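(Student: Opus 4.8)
The plan is to prove both equalities by exploiting the explicit compact-induction descriptions of $\pi_\theta$ recalled just above, together with Lemma~\ref{lem:stilladmissible} (which guarantees $(E/F,\theta^n)$ is again admissible with $d(\theta^n)=d(\theta)$) and the hypothesis that $n$ is coprime to $(q-1)q(q+1)$. First I would split into the positive-depth and depth-zero cases, exactly as in the construction. In the positive-depth case $d(\pi_\theta)=r>0$ we have, by Lemma~\ref{lem:stilladmissible}, $d(\pi_{\theta^n})=r$ as well, so both $\pi_\theta$ and $\pi_{\theta^n}$ are compactly induced from the \emph{same} open subgroup $T_{E_\alpha}(\GL_2)_{x_\alpha,r/2}$; the only thing that changes is the inducing representation $\rho_\theta$ versus $\rho_{\theta^n}$. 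The formal degree of a representation compactly induced from an open subgroup $J$ is $\deg(c\text{-}\Ind^{\GL_2(F)}_J\sigma)=\dim(\sigma)/\vol(J/Z_{\GL_2}(F))$ (normalizing as in \cite[p.35]{MR2716688}), so once I show $\dim(\rho_{\theta^n})=\dim(\rho_\theta)$ the equality of formal degrees follows immediately. Thus the whole statement reduces to the dimension equality.

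For the dimension equality in the positive-depth case I would go through the two subcases in the definition of $\rho_\theta$. If $\Fg\Fl_{x_\alpha,r/2}=\Fg\Fl_{x_\alpha,(r/2)^+}$ then $\rho_\theta=\theta$ and $\rho_{\theta^n}=\theta^n$ are both one-dimensional, so there is nothing to prove. If $\Fg\Fl_{x_\alpha,r/2}\neq\Fg\Fl_{x_\alpha,(r/2)^+}$, then $\dim(\rho_\theta)=\dim(r_\theta)$ where $r_\theta$ is the unique component occurring with multiplicity equal to its own dimension inside $\Ind^{Z_{\GL_2}(F)T_{E_\alpha,0^+}(\GL_2)_{x_\alpha,r/2}}_{Z_{\GL_2}(F)T_{E_\alpha,0^+}(\GL_2)_{x_\alpha,(r/2)^+}}\theta$. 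Since $d(\theta^n)=d(\theta)=r$ and, as shown inside the proof of Lemma~\ref{lem:stilladmissible}, raising to the $n$-th power is an automorphism of the relevant character groups (it is a bijection on $\wh{(1+\Fp_E)}$ and on $\mu_E$ because $n$ is prime to $q^2-1$ and to $q$), the character $\theta^n$ has the same restriction pattern to the filtration subgroups as $\theta$, and the induced representation decomposes with the same multiplicities; in particular the "special" component $r_{\theta^n}$ has the same dimension as $r_\theta$. (Concretely $\dim(\rho_\theta)$ in this Heisenberg-type situation is $\sqrt{[\,(\GL_2)_{x_\alpha,r/2}:(\GL_2)_{x_\alpha,(r/2)^+}Z]}$ times a fixed factor depending only on $\alpha$ and $r$, none of which changes under $\theta\mapsto\theta^n$.) For the depth-zero case, $\pi_\theta$ is compactly induced from the inflation to $\GL_2(\CO_F)$ of a cuspidal Deligne–Lusztig representation of $\GL_2(\CO_F/\Fp_F)$ attached to the (now nontrivial, by the argument in Lemma~\ref{lem:stilladmissible}) character obtained from $\theta$; these Deligne–Lusztig cuspidals all have the same dimension $q-1$, so $\dim(\rho_{\theta^n})=\dim(\rho_\theta)=q-1$ and the inducing subgroup is unchanged, giving equality of formal degrees.

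The main obstacle I expect is the bookkeeping in the Heisenberg subcase: one has to be careful that the combinatorics of how $\theta$ restricts to the graded pieces $(\GL_2)_{x_\alpha,r/2}/(\GL_2)_{x_\alpha,(r/2)^+}$ — which is what determines whether $r_\theta$ is a genuine Heisenberg representation and of what dimension — is genuinely insensitive to replacing $\theta$ by $\theta^n$. This is where the coprimality of $n$ with $(q-1)q(q+1)$ does real work: it ensures $x\mapsto x^n$ is bijective on each finite abelian quotient appearing (residue fields of $E$, and the Pontryagin duals thereof), so the "refined depth" data and the associated symplectic form on the graded piece are carried to the corresponding data for $\theta^n$. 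I would isolate this as a short lemma ("the refined minimal $K$-type datum of $\pi_{\theta^n}$ is obtained from that of $\pi_\theta$ by composing with $x\mapsto x^n$, which is an isomorphism of the ambient groups"), after which both the dimension statement and, in fact, the later character computations fall out uniformly. The remaining steps — identifying $\deg$ with $\dim(\rho)/\vol$ and the depth-zero dimension count — are routine given the references already cited.
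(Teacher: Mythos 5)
Your proposal is correct and follows essentially the same route as the paper: reduce the formal-degree equality to $\dim(\rho_{\theta^n})=\dim(\rho_\theta)$ via the formula $\deg(\pi_\theta)=\dim(\rho_\theta)/\vol(\cdot)$, then observe that $\dim(\rho_\theta)^2$ equals a group index depending only on $r=d(\theta)$, which is unchanged by $\theta\mapsto\theta^n$ thanks to Lemma \ref{lem:stilladmissible}. The "Heisenberg bookkeeping" you flag as the main obstacle is in fact already disposed of by the index formula you cite parenthetically (the relevant subcase of the construction is determined by $\alpha$ and $r$ alone, not by $\theta$), and your explicit depth-zero discussion is a harmless addition the paper leaves implicit.
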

\begin{proof}
From \cite[p.29]{MR1028263} the following identity holds
$$
\deg(\pi_{\theta}) = \deg(\rho_{\theta})/\vol(Z_{\GL_{2}(F)}(\GL_{2})_{x_{\alp},r/2}/Z_{\GL_{2}(F)})
$$
where $\alp\in Q_{F}$ satisfies $E_{\alp} = E$ and $r= d(\theta)$. Since $d(\theta) = d(\theta^{n})$ and $\deg(\rho_{\theta}) = \dim(\rho_{\theta})$ with suitably chosen measure, we only need to show the identity $\dim (\rho_{\theta}) = \dim(\rho_{\theta^{n}})$. But from \cite[p.35]{MR2716688} the following identity holds
$$
\dim(\rho_{\theta})^{2} = 
[Z_{\GL_{2}}(F)T_{E,0^{+}}(\GL_{2})_{x,r/2}:Z_{\GL_{2}}(F)T_{E,0^{+}}(\GL_{2})_{x,(r/2)^{+}}].
$$
Therefore $\dim(\rho_{\theta})$ also depends only on the depth $r = d(\theta)$. It follows that $\dim(\rho_{\theta^{n}}) = \dim (\rho_{\theta})$ and we have completed the proof of the lemma.
\end{proof}

Now we are ready to state our main result, which is parallel to the Archimedean case in Section \ref{sub:discreteseries}.

\begin{thm}\label{thm:supercuspidal}
Assume that $n$ and $(q-1)q(q+1)$ are coprime. Then the following distributional identity holds,
$$
D_{\GL_{2}}(\gam)^{1/2}\tr_{\pi_{\theta^{n}}}(\gam) = D_{\GL_{2}}(\gam^{n})^{1/2}\tr_{\pi_{\theta}}(\gam^{n}), \quad \gam\in \GL_{2}(F)^{\rss}.
$$
\end{thm}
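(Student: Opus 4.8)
The plan is to reduce the identity to the explicit character formula for supercuspidal representations of $\GL_2(F)$ and then match the two sides orbit by orbit, in the same spirit as the principal and discrete series cases but with the extra bookkeeping of depths and Gauss sums. Two facts feed in. First, by Lemmas \ref{lem:stilladmissible} and \ref{lem:formaldegreeinv}, $\pi_{\theta^n}$ is again the supercuspidal representation attached to an admissible pair over the \emph{same} quadratic extension $E/F$, has the \emph{same} depth $r=d(\theta)=d(\theta^n)$, the same formal degree, and $\dim\rho_{\theta^n}=\dim\rho_\theta$; in particular $\pi_\theta$ and $\pi_{\theta^n}$ are compactly induced from representations of one and the same subgroup $J=T_{E_\alp}(\GL_2)_{x_\alp,r/2}$. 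Second, since $\gcd\big(n,(q-1)q(q+1)\big)=1$, the map $t\mapsto t^n$ is an automorphism of each finite abelian group entering the construction: of $\BF_q^\times$ and $\BF_{q^2}^\times$ (hence of $\mu_E$), and (because $p\mid q$) of $1+\Fp_E$ and of the pro-$p$ Moy--Prasad quotients. Consequently $\chi\mapsto\chi^n$ is a bijection on the relevant character groups with $\theta^n(\delta)=\theta(\delta^n)$, the element $\gamma^n$ has the same depth $d^+(\gamma^n)=d^+(\gamma)$ as $\gamma$, and $n$ is forced to be odd.

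Since both sides of the claim are invariant, locally integrable, and locally constant on $\GL_2(F)^{\rss}$, it suffices to verify the identity on each maximal torus $T_{E_\alp}(F)$, $\alp\in Q_F$, using that $\gamma\mapsto\gamma^n$ preserves $T_{E_\alp}(F)$ and maps $T_{E_\alp}(F)^{\rss}$ into $\GL_2(F)^{\rss}$ off a set of measure zero (Remark \ref{rmk:zeroomit}). For $\gamma$ in a torus $T_{E_\alp}(F)$ with $E_\alp\not\cong E$, or in the split torus, the character of $\pi_\theta$ is controlled — by the explicit formula — through vanishing (for the non-matching anisotropic torus) and, near the center, through Deligne--Lusztig Green functions and Fourier transforms $\wh{\mu}_{\reg}$ of nilpotent orbital integrals; invariance under $(\theta,\gamma)\mapsto(\theta^n,\gamma^n)$ then follows because the Green functions are constant along the finitely many unipotent classes, which $u\mapsto u^n$ permutes trivially, because $\wh{\mu}_{\reg}$ is unchanged by scaling $\log\gamma$ by the unit $n$, and because the central character contributes only through $\theta^n(z)=\theta(z^n)$. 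For $\gamma\in T_E(F)^{\rss}$ that is ``deep'' relative to $r$ the formula collapses to $D_{\GL_2}^{1/2}(\gamma)\tr_{\pi_\theta}(\gamma)=-\varepsilon_{E,r}(\gamma)\big(\theta(\gamma)+\theta(\gamma^\lambda)\big)$ with $\varepsilon_{E,r}(\gamma)$ a sign depending only on $E$, $r$ and $d^+(\gamma)$; since $d^+(\gamma^n)=d^+(\gamma)$, $(\gamma^\lambda)^n=(\gamma^n)^\lambda$, and $\theta^n(\gamma)=\theta(\gamma^n)$, the identity is immediate in this range.

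I expect the decisive case to be $\gamma\in T_E(F)^{\rss}$ that is ``shallow'' relative to $r$: there the character formula carries genuine correction factors — quadratic Gauss sums from a Weil-representation piece, together with terms $\wh{\mu}_{\reg}(\log\gamma)$ — and the main obstacle is to check that these transform correctly under $(\theta,\gamma)\mapsto(\theta^n,\gamma^n)$ while keeping track of the normalizations $D_{\GL_2}^{1/2}(\gamma)$ versus $D_{\GL_2}^{1/2}(\gamma^n)$ and of the sign conventions, and to handle separately the depth-zero representations by a direct computation with the inflating cuspidal representation of $\GL_2(\BF_q)$. Here one uses that the quadratic Gauss sums are unaffected by the odd exponent $n$, that the orbital-integral terms are unaffected by the unit scalar $n$, and that the arithmetic input above matches the $\theta$-dependent data of the two sides; feeding in the precise shape of the formula from the references cited in Section \ref{subsec:supercuspidal} then completes the proof.

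Finally, an alternative and structurally cleaner route to the whole statement is the Mackey-type character formula for the compactly induced $\pi_\theta=c\text{-}\Ind_J^{\GL_2(F)}\rho_\theta$. Since $J$ is common to $\theta$ and $\theta^n$, the global identity reduces to the finite-level identity $\tr_{\rho_{\theta^n}}(\delta)=\tr_{\rho_\theta}(\delta^n)$ for $\delta\in J^{\rss}$, which follows from the explicit construction of $\rho_\theta$ out of $\theta$ together with a Heisenberg--Weil piece depending only on $(E,r)$; one must still take some care with the summation over conjugating elements, which is why in practice I would carry out the proof through the explicit character formula.
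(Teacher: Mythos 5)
Your strategy is the same as the paper's: reduce to the explicit character tables for positive-depth and depth-zero supercuspidal representations of $\GL_{2}(F)$ cited in Section \ref{subsec:supercuspidal}, use Lemmas \ref{lem:stilladmissible}, \ref{lem:formaldegreeinv} and \ref{lem:discrminant} to see that $\pi_{\theta^{n}}$ is induced from the same subgroup with the same depth, formal degree and Weyl discriminants, treat depth zero via Deligne--Lusztig characters (the paper does this through the Rader--Silberger integral formula applied to the explicit matrix coefficient $\dot{\chi}_{\rho_{\theta}}$), and then check the remaining $\theta$-dependent data range by range in $d^{+}(\gamma)$ relative to $r/2$ --- exactly your ``deep''/``shallow'' split.

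The gap is that the decisive verifications in the shallow range are asserted rather than carried out, and they are not quite the statements you name. For $0<d^{+}(\gamma)<r/2$ the fourth-root-of-unity factor $\gamma(X_{\pi_{\theta}},{}^{w}Y)$ depends on $Y$ \emph{nonlinearly} under the substitution $\gamma\mapsto\gamma^{n}$: one must compare $\gamma(nX_{\pi_{\theta}},{}^{w}Y)$ with $\gamma(X_{\pi_{\theta}},{}^{w}((1+Y)^{n}-1))$, and the issue is not that ``Gauss sums are unaffected by the odd exponent'' but that the higher-order correction $f(Y)=(1+Y)^{n}-1-nY\in\Fg^{\p}_{2d^{+}(\gamma)}$ pushes the relevant term $\tr([X,V]\cdot[V,{}^{w}f(Y)])$ into the conductor of the additive character $\Lam$, so the quadratic form defining the Weil-representation constant is unchanged. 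Similarly, ``$\wh{\mu}_{\reg}$ is unchanged by scaling by the unit $n$'' is exactly what needs proof: the paper obtains it from the Shalika germ expansion, using that the germs $\Gam_{\CO}(X)$ of an elliptic $X$ depend only on $d(X)$ (so $\Gam_{\CO}(X_{\pi_{\theta}})=\Gam_{\CO}(nX_{\pi_{\theta}})$, noting $X_{\pi_{\theta^{n}}}=nX_{\pi_{\theta}}$) and that $\wh{\mu}_{\CO}(Y)$ depends only on discriminant data preserved by $Y\mapsto(1+Y)^{n}-1$. Everything else in your outline --- admissibility of $\theta^{n}$, equality of depths and formal degrees, oddness of $n$, the deep range, and the depth-zero case --- is correct and matches the paper; but without the two verifications above the argument is a plan, not a proof.
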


From the character tables in \cite[Theorem~5.3.2]{MR2716688} and \cite[Theorem~5.4.1]{MR2716688}, for an admissible pair $(E_{\alp}/F,\theta)$ with $\alp \in Q_{F}\bs \{ 1\}$, the distribution character of the supercuspidal representation $\pi_{\theta}$ is supported on the $\GL_{2}(F)$-conjugacy classes of $(\GL_{2})_{x_{\alp},0}$. Therefore we only need to establish Theorem \ref{thm:supercuspidal} for those elements $\gam\in (\GL_{2})_{x_{\alp},0}$ that are regular semi-simple.

We note the following fact.

\begin{lem}\label{lem:discrminant}
For $\gam\in (\GL_{2})_{x_{\alp},0}^{\rss}$, the following equality holds, 
$$
d^{+}(\gam) = d^{+}(\gam^{n}),\quad 
D_{\GL_{2}}(\gam) = D_{\GL_{2}}(\gam^{n}).
$$
\end{lem}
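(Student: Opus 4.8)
The plan is to analyze the two assertions separately, since both follow from the same structural fact: for $\gam \in (\GL_2)_{x_\alp,0}^{\rss}$, the eigenvalue behavior of $\gam$ (in the quadratic extension $E_\alp$) is well controlled, and raising to the $n$-th power with $n$ coprime to $(q-1)q(q+1)$ acts as an automorphism on the relevant filtration quotients of $E_\alp^\times$. First I would set up notation: write $\gam$ as an element of $T_{E_\alp}(F) \cong E_\alp^\times$ under the identification recalled just before Definition \ref{defin:depth}, so $\gam$ corresponds to some $\zeta \in E_\alp^\times$ with $\zeta$ and $\bar\zeta = \zeta^\lam$ its two eigenvalues (where $\lam$ generates $\Gal(E_\alp/F)$). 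The Weyl discriminant is $D_{\GL_2}(\gam) = |(\zeta - \zeta^\lam)^2 / (\zeta\zeta^\lam)|_F$ up to the usual normalization, i.e.\ it only depends on $\ord_E(\zeta - \zeta^\lam)$ relative to $\ord_E(\zeta\zeta^\lam)$ (more precisely on the valuation of $1 - \zeta^\lam/\zeta$, which measures how close $\gam$ is to the center).

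For the discriminant equality $D_{\GL_2}(\gam) = D_{\GL_2}(\gam^n)$, I would argue as follows. Since $\gam \in (\GL_2)_{x_\alp,0}$, its image in $T_{E_\alp}(F)/Z_{\GL_2}(F)$ lies in the maximal compact, so we may write $\zeta = u \cdot z$ with $z \in Z$ (a scalar, contributing nothing to $D_{\GL_2}$) and $u$ a unit in $E_\alp^\times$ of the form $u = \omega(1 + x)$ with $\omega \in \mu_{E_\alp}$ a root of unity and $x \in \Fp_{E_\alp}$ (using $E_\alp^\times = \mu_{E_\alp} \times \vpi_E^{\BZ} \times (1+\Fp_{E_\alp})$ in the unramified case, and the analogous decomposition in the ramified case). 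The quantity governing $D_{\GL_2}$ is $v := \ord_E(1 - u^\lam/u)$. Raising to the $n$-th power: $u^n = \omega^n (1+x)^n$, and $1 - (u^n)^\lam/u^n = 1 - (\omega^\lam/\omega)^n ((1+x)^\lam/(1+x))^n$. Here is the crucial point — I expect this to be the main obstacle — one must show $\ord_E(1 - (u^n)^\lam/u^n) = \ord_E(1 - u^\lam/u)$. This splits into cases: if $\omega^\lam \neq \omega$ (depth-zero regular case) then $\omega^\lam/\omega$ is a nontrivial root of unity, and since $\gcd(n, |\mu_{E_\alp}|) = 1$ (as $|\mu_{E_\alp}|$ divides $(q-1)q(q+1)$ in the relevant cases, or $q^2-1$), $(\omega^\lam/\omega)^n$ is again a nontrivial root of unity, so both sides have $\ord_E = 0$; if $\omega^\lam = \omega$, then $1 - u^\lam/u = 1 - ((1+x)^\lam/(1+x))^{} \in \Fp_{E}^{\geq 1}$, write $(1+x)^\lam/(1+x) = 1 + y$ with $\ord_E(y) = v \geq 1$, and $(1+y)^n = 1 + ny + \binom{n}{2}y^2 + \cdots$; since $n$ is coprime to $q$ hence a unit, $\ord_E(ny) = v$ and all higher terms have strictly larger valuation, so $\ord_E((1+y)^n - 1) = v$. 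This handles the ramified case (where the filtration on $1+\Fp_E$ is the relevant one) and the wildly-close-to-center part of the unramified case uniformly. The coprimality to all three factors $(q-1)$, $q$, $(q+1)$ is exactly what is needed to cover both $\mu_{E_\alp}$ (order dividing $q-1$ or $q+1 | q^2-1$, depending on ramification) and the pro-$p$ part $1+\Fp_E$.

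For the maximal-depth equality $d^+(\gam) = d^+(\gam^n)$, I would proceed similarly but track the filtration subgroups $(\GL_2)_{x_\alp,r}$ of the explicit form recalled in the excerpt (the $x_0$ and $x_I$ filtrations). By Definition \ref{defin:depthcenter}, $d^+(\gam) = \max_{z \in Z} d(z\gam)$, and by the discussion after Definition \ref{defin:depth}, $d(z\gam)$ is computed by the depth of $z\zeta$ as an element of $E_\alp^\times$ relative to the induced filtration. Choosing $z$ optimally amounts to translating $\zeta$ so that $\ord_E$ of its ``non-central part'' $1 - \zeta^\lam/\zeta$ is as large as possible — but note $1 - \zeta^\lam/\zeta$ is already invariant under central twist $\zeta \mapsto z\zeta$, so in fact $d^+(\gam)$ is a simple explicit increasing function of $v = \ord_E(1-\zeta^\lam/\zeta)$ (together with the ramification type of $E_\alp$, which $n$-th powering preserves). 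Thus $d^+(\gam) = d^+(\gam^n)$ reduces to the very same computation $\ord_E(1 - (\zeta^n)^\lam/\zeta^n) = \ord_E(1 - \zeta^\lam/\zeta)$ carried out above. I would cite \cite[\S 5.1, \S 5.3]{MR2716688} and \cite[\S 5.1]{MR2798422} for the explicit identification of $d^+$ with this valuation, and invoke the case analysis once, remarking that it simultaneously yields both equalities. The only genuine care needed is the bookkeeping of which root-of-unity group $\mu_{E_\alp}$ actually appears (order $q-1$ for split or unramified-inert eigenvalues, dividing $q+1$ for the norm-one part, dividing $q(q^2-1)$ overall after including pro-$p$), and confirming that $\gcd(n,(q-1)q(q+1)) = 1$ kills all torsion obstructions in each — which is precisely the hypothesis.
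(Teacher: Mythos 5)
Your proposal is correct and follows essentially the same route as the paper: both arguments reduce everything to showing that $\ord(a^n-1)=\ord(a-1)$ for the unit $a=$ (ratio of eigenvalues), split into the residue-field case (handled by coprimality of $n$ with $q^2-1=(q-1)(q+1)$) and the principal-unit case (handled by coprimality with $q$ via the binomial expansion), after identifying both $D_{\GL_2}(\gam)$ and $d^{+}(\gam)$ with that valuation. The only organizational difference is that the paper first reduces the non-split case to the diagonal case by a quadratic base change (citing invariance of $D_{\GL_2}$ and $d^{+}$ under field extension), whereas you work directly in $E_{\alp}^{\times}$ via the Teichm\"uller decomposition; the underlying computation is identical.
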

\begin{proof}
We first treat the case that $\gam$ is diagonal. Up to the action of $Z_{\GL_{2}}(F)$ we may assume that $\gam = \diag(1,a)$ for some $a\in \CO^{\times}_{F}$. Note that from \cite[Lemma~3.2.1]{MR2716688} the natural filtration on the diagonal split torus coincides with the filtration induced from $\{(\GL_{2})_{x_{0},r}\}_{r\geq 0}.$ 

By direct computation,
$$
D^{\GL_{2}}(\gam) = |(a-1)(a^{-1}-1)| = |(a-1)|^{2} =q^{-2\ord(a-1)}.
$$
On the other hand, by definition $d^{+}(\gam) = \ord(a-1)$. Therefore 
$$
D^{\GL_{2}}(\gam) = q^{-2d^{+}(\gam)}.
$$

We note that if $a\in \CO^{\times}_{F}\bs 1+\Fp_{F}$, then $a^{n}\in \CO^{\times}_{F}\bs 1+\Fp_{F}$ since $n$ is coprime to $(q-1)q(q+1)$. Therefore $|a-1| = |a^{n}-1|$. On the other hand, if $a\in 1+\Fp_{F}^{k}\bs 1+\Fp_{F}^{k+1}$ for some $k\in \BN$, then $a^{n}\in 1+\Fp_{F}^{k}\bs 1+\Fp^{k+1}_{F}$ as well since $n$ is coprime to $(q-1)q(q+1)$. It follows that $D^{\GL_{2}}(\gam^{n}) = D^{\GL_{2}}(\gam)$.

For $\gam$ non-split, we note that both $D^{\GL_{2}}(\cdot)$ and $d^{+}(\cdot)$ are invariant under (quadratic) field extension \cite[Lemma~3.7]{MR2431235}. It follows that we can reduce the question to the situation that $\gam$ is split since for $\gam$ non-split it always splits over a quadratic extension. In particular the identities $D^{\GL_{2}}(\gam)  =q^{-2d^{+}(\gam)}$ and $d^{+}(\gam^{n}) = d^{+}(\gam)$ holds identically since $n$ is coprime to $(q-1)q(q+1)$. Here we note that the residue field of a quadratic extension of $E$ might have cardinality $q^{2}-1 = (q-1)(q+1)$.

It follows that we have established the lemma.
\end{proof}

Therefore we only need to establish the relation between $\tr_{\pi_{\theta}}$ and $\tr_{\pi_{\theta^{n}}}.$

We divide the proof into two cases, depending on the depth of the supercuspidal representations.

\subsubsection{Depth zero case}
We prove Theorem \ref{thm:supercuspidal} for $\pi$ of depth zero. 

We first recall the following theorem proved by C. Rader and A. Silberger \cite{MR1169888}, which generalized the result of Harish-Chandra \cite{MR0414797}.

\begin{thm}\label{thm:hcradersilberger}
Let $G$ be a connected reductive algebraic group defined over a $p$-adic field $F$ with split center $A_{G}$. 
For any square-integrable representation $\pi$ of $G(F)$, 
$$
\tr_{\pi}(\gam) = 
\frac{\deg(\pi)}{\phi(1)}
\int_{G(F)/A_{G}(F)}
dg^{*}\int_{K}dk \phi({}^{gk}\gam),\quad \gam\in G(F)^{\rss}.
$$
Here $\phi$ is a nonzero $K$-finite matrix coefficient of $\pi$, $dg^{*}$ is a $G(F)$-invariant measure on $G(F)/A_{G}(F)$, $dk$ is the normalized Haar measure of a compact open subgroup $K$ in $G$, and ${}^{gk}\gam = gk\gam(gk)^{-1}$.
\end{thm}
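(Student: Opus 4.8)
The plan is to prove the Harish-Chandra--Rader--Silberger formula by exhibiting both sides as locally constant invariant functions on $G(F)^{\rss}$ and matching them as invariant distributions. First I would reduce, using linearity in $\phi$ (polarization of matrix coefficients) and rescaling, to the case $\phi(g)=\langle\pi(g)v_{0},v_{0}\rangle$ for a $K$-finite unit vector $v_{0}$, so that $\phi(1)=1$; note that square-integrability of $\pi$ puts $\phi$ in $L^{2}(G(F)/A_{G}(F))$, indeed in Harish-Chandra's Schwartz space $\CC(G(F))$. Write $\Phi(\gamma)=\int_{G(F)/A_{G}(F)}\int_{K}\phi({}^{gk}\gamma)\,dk\,dg^{*}$ for the right-hand side; with this normalization the claim becomes $\Theta_{\pi}(\gamma)=\deg(\pi)\,\Phi(\gamma)$, where $\Theta_{\pi}$ is the character function of $\pi$, which by Harish-Chandra's regularity theorem is a locally integrable invariant function that is locally constant on $G(F)^{\rss}$. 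The function $\Phi$ is manifestly invariant under $G(F)$-conjugation.

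The first genuine step is to show $\Phi(\gamma)$ converges absolutely for every $\gamma\in G(F)^{\rss}$ and is locally constant there. Since $\phi$ is a discrete-series matrix coefficient it obeys Harish-Chandra's sharp decay estimate on $G(F)/A_{G}(F)$ (it is Schwartz relative to the $\Xi$-function); combined with the standard bound for the growth of a regular semisimple conjugacy class this gives absolute convergence of $\int_{G(F)/G_{\gamma}(F)}\phi(g\gamma g^{-1})\,dg$ when $\gamma$ is elliptic regular (compact modulo $A_{G}$), which is Harish-Chandra's case. When $\gamma$ is regular semisimple but not elliptic, $G_{\gamma}(F)/A_{G}(F)$ is noncompact and more care is needed; this is exactly the refinement of Rader and Silberger, who invoke Harish-Chandra's submersion principle (the descent of invariant integrals) together with the Schwartz-space estimates to make sense of and control the integral over $G(F)/A_{G}(F)$. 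Granting this, local constancy of $\Phi$ on $G(F)^{\rss}$ follows from continuity of $\phi$ and uniformity of the bounds.

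The second step is the identification $\deg(\pi)\Phi=\Theta_{\pi}$ on $G(F)^{\rss}$. As both are locally constant invariant functions on $G(F)^{\rss}$, it suffices to check they define the same distribution, i.e. $\deg(\pi)\int_{G/A_{G}}f\Phi=\tr_{\pi}(f)$ for every $f\in C_{c}^{\infty}(G(F))$ with central character $\omega_{\pi}^{-1}$ (compactly supported modulo $A_{G}$). Using the estimates above to apply Fubini, and the substitution $\gamma\mapsto (gk)^{-1}\delta(gk)$ together with right $K$-invariance of Haar measure, one rewrites $\int_{G/A_{G}}f(\gamma)\Phi(\gamma)\,d\gamma$ as $\int_{G/A_{G}}\phi(\delta)\bigl(\int_{G/A_{G}}f(h^{-1}\delta h)\,dh^{*}\bigr)\,d\delta$, the pairing of $\phi$ against the orbital integral of $f$. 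Then I would invoke the orthogonality relations for square-integrable representations --- equivalently, that in the Schwartz algebra $\CC(G(F))$ the element $\deg(\pi)\,\overline{\phi}$ is the idempotent cutting out the $\pi$-isotypic part, so $\pi'(\deg(\pi)\overline{\phi})=0$ for tempered $\pi'\not\simeq\pi$ while $\pi(\deg(\pi)\overline{\phi})$ is the rank-one projection $P_{v_{0}}$ of trace $1$ --- to evaluate this pairing as $\deg(\pi)^{-1}\tr_{\pi}(f)$. This gives $\deg(\pi)\int_{G/A_{G}}f\Phi=\tr_{\pi}(f)=\int f\Theta_{\pi}$ for all such $f$, hence $\deg(\pi)\Phi=\Theta_{\pi}$ on $G(F)^{\rss}$ by the regularity theorem and density of the regular set; undoing the normalization recovers the formula with the factor $\deg(\pi)/\phi(1)$.

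The main obstacle is the convergence claim for $\gamma$ regular semisimple but not elliptic modulo center: for elliptic $\gamma$ this is Harish-Chandra's classical argument, but extending to all regular semisimple elements --- where the centralizer has positive split rank over $A_{G}$ and the naive orbital integral of $\phi$ need not converge --- is precisely the content of Rader and Silberger's work and relies on Harish-Chandra's submersion principle and the delicate Schwartz-space estimates for discrete-series coefficients. A secondary technical point is that $\overline{\phi}$ is not compactly supported, so the orthogonality computation in the last step must be performed in the Harish-Chandra Schwartz algebra, using that $\Theta_{\pi}$ and the orbital-integral distributions extend continuously there, again a consequence of Harish-Chandra's Plancherel theory.
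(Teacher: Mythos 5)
The paper does not actually prove this statement: it is quoted verbatim as a known result of Rader--Silberger \cite{MR1169888}, generalizing Harish-Chandra \cite{MR0414797}, and is then used as input for the depth-zero supercuspidal character computation. So there is no internal proof to compare your argument against; the only meaningful question is whether your sketch stands on its own as a proof, and it does not quite. Your outline is the correct and standard one (normalize $\phi$, establish convergence and local constancy of $\Phi(\gamma)=\int_{G/A_{G}}\int_{K}\phi({}^{gk}\gamma)\,dk\,dg^{*}$, then identify $\deg(\pi)\Phi$ with $\Theta_{\pi}$ weakly via Fubini, the substitution $\gamma\mapsto (gk)^{-1}\delta(gk)$, and Schur orthogonality phrased through the idempotent $\deg(\pi)\overline{\phi}$ in the Schwartz algebra), and the second step is essentially Harish-Chandra's classical computation.

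The genuine gap is exactly the point you flag as "the main obstacle" and then defer: for $\gamma$ regular semisimple but not elliptic modulo $A_{G}$, the convergence of the iterated integral (the $K$-average is essential; the double integral is not absolutely convergent, so one cannot simply "apply Fubini using the estimates above"), the uniformity needed for local constancy of $\Phi$, and the justification of interchanging the test-function integral with the $g$-integral are precisely the content of the Rader--Silberger theorem itself, resting on Harish-Chandra's submersion principle and the Schwartz-space estimates for discrete-series coefficients. Citing "Rader and Silberger's work" at that step makes the argument circular as a proof of this theorem: what you have written is an accurate roadmap of their proof (plus Harish-Chandra's elliptic case and orthogonality argument), not an independent derivation. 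A similar, smaller issue is the final weak-to-pointwise identification: you invoke continuity of $\Theta_{\pi}$ and of the orbital-integral pairing on the Schwartz algebra via Plancherel theory without proof, though this could be replaced by the more elementary argument that two locally constant invariant functions on $G(F)^{\rss}$ agreeing as distributions against all $f\in C_{c}^{\infty}$ (after averaging over the central character) agree pointwise. Since the paper itself treats the statement as an external citation, your proposal is acceptable as an exposition of why the theorem is true, but it should be presented as such rather than as a proof.
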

It turns out that there is an explicit construction of matrix coefficients for any supercuspidal representations of $\GL_{2}(F)$ related to the construction of supercuspidal representations as shown in \cite[Theorem~1.9]{MR1039842}.

Following the notation from the explicit construction of supercuspidal representations mentioned above, let $\chi_{\rho_{\theta}}$ be the character of $\rho_{\theta}$. From \cite[Theorem~1.9]{MR1039842} the following function defined on $\GL_{2}(F)$, 
$$
\dot{\chi_{\rho_{\theta}}}(\gam) = 
\bigg\{
\begin{matrix}
\chi_{\rho_{\theta}}(\gam),  & \gam\in T_{E_{\alp}}(\GL_{2})_{x,r/2},\\
0, & \text{ otherwise}.
\end{matrix}
$$ 
is a matrix coefficient for $\pi_{\theta}$. This is exactly the matrix coefficient that we are going to use in Theorem \ref{thm:hcradersilberger}.

Now we plug the matrix coefficient $\dot{\chi_{\rho_{\theta}}}$ into Theorem \ref{thm:hcradersilberger} and we obtain that the distribution character is of the following form 
\begin{equation}\label{eq:characterformula}
\tr_{\pi_{\theta}}(\gam) = 
\frac{\deg(\pi_{\theta})}{\dim(\rho_{\theta})}
\int_{G(F)/A_{G}(F)}
dg^{*}
\int_{K}dk \dot{\chi_{\theta}}({}^{gk} \gam), \quad \gam\in \GL_{2}(F)^{\rss}.
\end{equation}

Combining with Lemma \ref{lem:formaldegreeinv}, we only need to establish the following lemma.

\begin{lem}
For a supercuspidal representation $\pi_{\theta}$ of depth zero, $\dot{\chi}_{\theta}(\gam^{n}) = \dot{\chi_{\theta^{n}}}(\gam)$ for any $\gam\in T_{E}(\GL_{2})_{x,r/2} = T_{E}(\GL_{2})_{x,0}$ that is regular semi-simple.
\end{lem}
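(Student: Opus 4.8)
The plan is to reduce the claimed identity to an elementary statement about the cuspidal Deligne--Lusztig characters of the finite group $\GL_{2}(\BF_{q})$.

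First I would unwind the depth-zero picture. When $d(\pi_{\theta})=0$ the admissible pair $(E/F,\theta)$ has $E/F$ unramified and $r=d(\theta)=0$, so the inducing subgroup in the compact-induction model of $\pi_{\theta}$ is $T_{E}(\GL_{2})_{x,0}$. Since $E/F$ is unramified, a uniformizer of $F$ is one of $E$, hence $T_{E}(F)=\vpi^{\BZ}\CO_{E}^{\times}\subset Z_{\GL_{2}}(F)\GL_{2}(\CO_{F})$ and therefore
$$
T_{E}(\GL_{2})_{x,0}=T_{E}(F)\,\GL_{2}(\CO_{F})=Z_{\GL_{2}}(F)\,\GL_{2}(\CO_{F}),
$$
which is a \emph{subgroup} of $\GL_{2}(F)$. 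In particular $\gam^{n}$ lies in it whenever $\gam$ does, so $\dot{\chi}_{\theta}(\gam^{n})=\chi_{\rho_{\theta}}(\gam^{n})$; and by Lemma \ref{lem:stilladmissible} the pair $(E/F,\theta^{n})$ is again admissible of depth zero with the same unramified $E$, so likewise $\dot{\chi_{\theta^{n}}}(\gam)=\chi_{\rho_{\theta^{n}}}(\gam)$. Thus it suffices to prove $\chi_{\rho_{\theta}}(\gam^{n})=\chi_{\rho_{\theta^{n}}}(\gam)$ for regular semisimple $\gam\in Z_{\GL_{2}}(F)\GL_{2}(\CO_{F})$. Writing $\gam=zg$ with $z\in\vpi^{\BZ}$ and $g\in\GL_{2}(\CO_{F})$ of reduction $\ol g\in\GL_{2}(\BF_{q})$, the depth-zero construction of $\rho_{\theta}$ gives $\chi_{\rho_{\theta}}(\gam)=\theta(z)\,R(\ol\theta)(\ol g)$, where $\ol\theta$ is the tame character of $\BF_{q^{2}}^{\times}\cong\CO_{E}^{\times}/(1+\Fp_{E})$ cut out by $\theta$ (in general position, since $(E/F,\theta)$ is admissible) and $R(\ol\theta)$ denotes the cuspidal Deligne--Lusztig character of $\GL_{2}(\BF_{q})$ attached to $\ol\theta$; this is well defined independently of the decomposition $\gam=zg$ because the central character of $R(\ol\theta)$ agrees on $\BF_{q}^{\times}$ with the reduction of $\theta|_{\CO_{F}^{\times}}$. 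The same holds for $\theta^{n}$ with $\ol{\theta^{n}}=(\ol\theta)^{n}$, and since $\theta(z^{n})=\theta(z)^{n}=\theta^{n}(z)$, the lemma is reduced to the purely finite-group identity
$$
R(\ol\theta)(\ol g^{\,n})=R\big((\ol\theta)^{n}\big)(\ol g),\qquad \ol g\in\GL_{2}(\BF_{q}).
$$

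To finish I would read this off the character table of the cuspidal character of $\GL_{2}(\BF_{q})$ (classical; see e.g. \cite[\S 5]{MR2716688}). The hypothesis that $n$ is coprime to $(q-1)q(q+1)$ means $n$ is coprime to $q$, to $q-1$ and to $q+1$, so that $x\mapsto x^{n}$ is a bijection of $\BF_{q}^{\times}$ and of $\BF_{q^{2}}^{\times}$, it carries a nontrivial transvection to a nontrivial transvection (unipotents have $p$-power order and $p\nmid n$), and it cannot move an elliptic element into the split torus, since $t^{n(q-1)}=1$ with $t\in\BF_{q^{2}}^{\times}$ would force $\ord(t)\mid\gcd(n(q-1),q^{2}-1)=(q-1)\gcd(n,q+1)=q-1$. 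Going through the four types of conjugacy classes of $\GL_{2}(\BF_{q})$ — a central scalar $aI$, a scalar times a nontrivial transvection $a\cdot u$, a split regular semisimple element $\mathrm{diag}(a,b)$ with $a\ne b$, and an elliptic regular semisimple element with eigenvalues $t,t^{q}$ for $t\in\BF_{q^{2}}^{\times}\setminus\BF_{q}^{\times}$ — the value of $R(\ol\theta)$ is $(q-1)\ol\theta(a)$, $-\ol\theta(a)$, $0$, and $-\big(\ol\theta(t)+\ol\theta(t^{q})\big)$ respectively. Since $x\mapsto x^{n}$ preserves the class type by the observations just made, and since $\ol\theta(x^{n})=\ol\theta(x)^{n}=(\ol\theta)^{n}(x)$ as $\ol\theta$ is a homomorphism, in each of the four cases $R(\ol\theta)(\ol g^{\,n})=R\big((\ol\theta)^{n}\big)(\ol g)$. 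This establishes the finite-group identity, and hence the lemma.

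The main obstacle is not the finite computation but setting up this dictionary cleanly: a regular semisimple $\gam\in\GL_{2}(F)$ need not reduce to a semisimple element of $\GL_{2}(\BF_{q})$ — its reduction can be a scalar times a transvection — so one genuinely needs the \emph{complete} character values of the cuspidal Deligne--Lusztig representation, not only its values on regular semisimple classes, together with a careful matching of the sign and of the central characters between $\rho_{\theta}$ and $R(\ol\theta)$. All of the facts used are classical and already present in the references cited above, in particular \cite{MR1039842} and \cite{MR2716688}.
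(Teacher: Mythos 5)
Your proof is correct, and at its core it follows the same strategy as the paper: reduce the identity to the Deligne--Lusztig character of the cuspidal representation of $\GL_{2}(\CO_{F}/\Fp_{F})$ attached to the reduction of $\theta$, and then exploit the coprimality of $n$ with $(q-1)q(q+1)$ to see that the $n$-th power map is compatible with that character formula. The difference is in thoroughness: the paper quotes the character formula only on the regular semisimple locus of the finite group (support on the elliptic classes, value $\sum_{w}\theta(w\cdot\wb{\gam})$ there, zero elsewhere) and stops, whereas you correctly observe that a regular semisimple $\gam\in Z_{\GL_{2}}(F)\GL_{2}(\CO_{F})$ can reduce to a \emph{non-regular} element of $\GL_{2}(\BF_{q})$ --- a scalar or a scalar times a transvection --- so that one genuinely needs all four rows of the character table of the cuspidal representation, together with the checks that $x\mapsto x^{n}$ preserves each class type (bijective on $\BF_{q}^{\times}$ and $\BF_{q^{2}}^{\times}$, sends nontrivial transvections to nontrivial transvections since $p\nmid n$, and cannot move an elliptic eigenvalue into $\BF_{q}^{\times}$ by your $\gcd$ computation). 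Your version therefore closes a small gap that the paper's own proof leaves implicit; the paper's version buys brevity by citing the Deligne--Lusztig support statement, while yours buys completeness on the degenerate reductions. Your preliminary reductions (that $T_{E}(\GL_{2})_{x,0}=Z_{\GL_{2}}(F)\GL_{2}(\CO_{F})$ is a group, so $\gam^{n}$ stays in the support, and that the central contributions $\theta(z^{n})=\theta^{n}(z)$ match) are also correct and are used implicitly in the paper.
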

\begin{proof}
When $\pi_{\theta}$ has depth zero, the character $\chi_{\theta}$ is the inflation of the Deligne-Lusztig characters of a cuspidal representation of $\GL_{2}(\CO_{F}/\Fp_{F})$ and $E$ is necessarily unramified, i.e. $E= E_{\eps}$ \cite[p.34]{MR2716688}. From \cite{MR1266626}, on the regular semi-simple locus of $\GL_{2}(\CO_{F}/\Fp_{F})$, the character $\chi_{\theta}$ is only supported on the conjugacy classes of elliptic torus that is associated to $E$, and vanishes otherwise. More precisely, the following character formula for $\chi_{\theta}(\gam)$ holds,
$$
\chi_{\theta}(\gam) = 
\bigg\{
\begin{matrix}
\sum_{w\in W(\GL_{n},T_{E})}
\theta(w\cdot\wb{\gam}), & \gam\in T_{E,0}(\GL_{2})_{x_{0},0^{+}} \text{ up to $(\GL_{2})_{x_{0},0}$-conjugacy},\\
0, & \text{ otherwise}.
\end{matrix}
$$
Here $\wb{\gam}$ is the reduction of $\gam$ modulo $(\GL_{2})_{x_{0},+}$. Using the fact that $T_{E,0}$ normalizes $(\GL_{2})_{x_{0},0^{+}}$ \cite[Lemma~3.2.1]{MR2716688}, the identity $\chi_{\theta^{n}}(\gam) = \chi_{\theta}(\gam^{n})$ holds.
\end{proof}

It follows that we have established Theorem \ref{thm:supercuspidal} for supercuspidal representations of depth zero.

\subsubsection{Positive depth case}
We prove Theorem \ref{thm:supercuspidal} for $\pi$ of positive depth.

We only need to establish the following proposition.

\begin{pro}\label{pro:supercuspidalpositivedepth}
Let $\pi_{\theta}$ be a supercuspidal representation of positive depth, then the following distributional identity holds,
$$
\tr_{\pi_{\theta^{n}}}
(\gam) = \tr_{\pi_{\theta}}(\gam^{n}),
\quad \gam\in (\GL_{2})_{x_{\alp},0}^{\rss}.
$$
\end{pro}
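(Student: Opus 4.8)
\emph{Approach.} The plan is to compare the explicit character formulas of \cite[Theorem~5.3.2]{MR2716688} and \cite[Theorem~5.4.1]{MR2716688} for $\pi_\theta$ and $\pi_{\theta^n}$ directly on $(\GL_2)_{x_\alpha,0}^{\rss}$, which by the support statement recalled above is the only regime we need. By Lemma~\ref{lem:stilladmissible}, $(E/F,\theta^n)$ is again an admissible pair with $d(\theta^n)=d(\theta)=:r>0$; and if $\gamma\in(\GL_2)_{x_\alpha,0}^{\rss}$ then $\gamma^n\in(\GL_2)_{x_\alpha,0}^{\rss}$ as well, since a ratio of two eigenvalues of $\gamma$ is a unit in $F$ or in a quadratic extension of $F$, so were it a nontrivial $n$-th root of unity its order would divide both $n$ and $(q-1)(q+1)$, or be a power of $p$ dividing $n$, contradicting $\gcd(n,(q-1)q(q+1))=1$. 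Hence the character formula applies to $\pi_\theta$ at $\gamma^n$ and to $\pi_{\theta^n}$ at $\gamma$, and the task is to match the two resulting expressions. We will use throughout that $n$ is odd (as $q-1$ is even and coprime to $n$). One could instead run the depth-zero argument verbatim: apply Theorem~\ref{thm:hcradersilberger} with the matrix coefficient $\dot\chi_{\rho_\theta}$ of \cite[Theorem~1.9]{MR1039842}, invoke Lemma~\ref{lem:formaldegreeinv} to see the prefactor $\deg(\pi_\theta)/\dim(\rho_\theta)$ is unchanged, and reduce to the pointwise identity $\dot\chi_{\rho_{\theta^n}}(\delta)=\dot\chi_{\rho_\theta}(\delta^n)$ on the regular semisimple locus of $T_{E_\alpha}(\GL_2)_{x_\alpha,r/2}$.

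\emph{Matching the ingredients.} The plan is to check that each constituent of the character formula transforms compatibly under $\theta\mapsto\theta^n$, $\gamma\mapsto\gamma^n$. The Weyl discriminant factor is immediate from Lemma~\ref{lem:discrminant}: $D_{\GL_2}(\gamma^n)=D_{\GL_2}(\gamma)$. The powers of $q$ in the formula and the case distinction within it are controlled by $r=d(\theta)$ and $d^{+}(\gamma)$, and $d(\theta^n)=d(\theta)$, $d^{+}(\gamma^n)=d^{+}(\gamma)$ by Lemmas~\ref{lem:stilladmissible} and~\ref{lem:discrminant}; the factor $\dim(\rho_\theta)$ is unchanged by Lemma~\ref{lem:formaldegreeinv}. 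The $\theta$-dependence of the leading term is through values $\theta(\gamma^w)$ at the $W(\GL_2,T_{E_\alpha})$-translates (equivalently the Galois conjugates on $T_{E_\alpha}\simeq E^\times$) of a truncation of $\gamma$, and is vacuous when $\gamma$ is $F$-split; since the $W$-action commutes with $n$-th powers one has $(\gamma^w)^n=(\gamma^n)^w$, and since $\theta$ is a character $\theta^n(\gamma^w)=\theta(\gamma^w)^n=\theta\bigl((\gamma^w)^n\bigr)$, so $\sum_w\theta^n(\gamma^w)=\sum_w\theta\bigl((\gamma^n)^w\bigr)$ term by term.

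\emph{The main obstacle.} The delicate step — and the one genuine departure from the tempered-principal-series and discrete-series cases — is to show that the sign, Weil-index or Gauss-sum factor appearing in \cite[Theorem~5.3.2]{MR2716688} and \cite[Theorem~5.4.1]{MR2716688} takes the same value for $\pi_{\theta^n}$ at $\gamma$ as for $\pi_\theta$ at $\gamma^n$. The plan is to read off from those tables that this factor depends on $\theta$ only through $\theta|_{\mu_E}$ and the quadratic residue character of the residue field of $E$, and on $\gamma$ only through $d^{+}(\gamma)$ and the image of $\gamma$ in $(\GL_2)_{x_\alpha,0}/(\GL_2)_{x_\alpha,0^{+}}$; since $\mu_E$ has order prime to $n$ and $n$ is odd, $t\mapsto t^n$ is an automorphism of $\mu_E$ preserving its quadratic character, so by the invariances above the factor for $(\theta^n,\gamma)$ equals that for $(\theta,\gamma^n)$. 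This case check is what I expect to require the most care. Putting the pieces together gives $D_{\GL_2}(\gamma)^{1/2}\tr_{\pi_{\theta^n}}(\gamma)=D_{\GL_2}(\gamma^n)^{1/2}\tr_{\pi_\theta}(\gamma^n)$ for all $\gamma\in(\GL_2)_{x_\alpha,0}^{\rss}$, which is Proposition~\ref{pro:supercuspidalpositivedepth}, and with the depth-zero case completes the proof of Theorem~\ref{thm:supercuspidal}.
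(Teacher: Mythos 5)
Your overall strategy --- compare the character tables of \cite[Theorem~5.3.2]{MR2716688} entry by entry, using Lemmas~\ref{lem:stilladmissible}, \ref{lem:formaldegreeinv} and \ref{lem:discrminant} to see that the depth-controlled prefactors match --- is the same as the paper's, and the easy ingredients (Weyl discriminant, $\dim\rho_\theta$, formal degree, the sum $\sum_w\theta(\gamma^w)$ over Weyl translates) are handled correctly. But the two genuinely hard terms in the positive-depth character formula are either missing or dispatched by an invariance claim that is not true, so there is a real gap.

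First, for $\gamma$ deep in the group ($d^{+}(\gamma)>r/2$), the character is governed not by values $\theta(\gamma^w)$ but by the Fourier transform of an orbital integral $\wh{\mu}_{X_{\pi_\theta}}(Y)$, where $X_{\pi_\theta}\in\Fg^{\p}_{-r}$ is the element realizing $\theta$ on the deep filtration and $Y$ is essentially $\gamma-1$. Your proposal never addresses how $\wh{\mu}_{X_{\pi_{\theta^n}}}(Y)$ compares with $\wh{\mu}_{X_{\pi_\theta}}\bigl((1+Y)^n-1\bigr)$; the paper proves this via the Shalika germ expansion, using that $X_{\pi_{\theta^n}}=nX_{\pi_\theta}$, that Shalika germs at elliptic elements depend only on depth, and that $\wh{\mu}_{\CO}(Y)$ depends only on discriminant data which Lemma~\ref{lem:discrminant} controls. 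Second, your resolution of the ``main obstacle'' asserts that the Weil-index/Gauss-sum factor depends on $\theta$ only through $\theta|_{\mu_E}$ and on $\gamma$ only through $d^{+}(\gamma)$ and its residual image. That is not the case in the intermediate range $0<d^{+}(\gamma)<r/2$: the relevant factor $\gam(X_{\pi_\theta},{}^wY)$ is defined by an integral of $\Lam\bigl(\tr([X,V]\cdot[V,{}^wY])/2\bigr)$ and depends on the actual pairing between $X_{\pi_\theta}$ and the truncation $Y$ of $\gamma$. The correct argument is a filtration estimate: writing $(1+Y)^n-1=nY+f(Y)$ with $f(Y)\in\Fg^{\p}_{2n(\gamma)}$, one checks $\tr([X,V]\cdot[V,{}^wf(Y)])\in\Fp_F$ so the additive character does not see the correction, and then uses the homogeneity $\gam(nX,{}^wY)=\gam(X,{}^w nY)$. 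Relatedly, your proposal omits the case division on $d^{+}(\gamma)$ relative to $r/2$ (and the depth-zero-on-the-torus case $d^{+}(\gamma)=0$, where the factor $\lam(\rho_\theta)$ must be shown to depend only on $d(\theta)$), which is structurally forced because the table gives different formulas in each range. Your fallback of running the Rader--Silberger integral ``verbatim'' does not avoid any of this: for positive depth $\rho_\theta$ is a Heisenberg--Weil-type representation, its character is not simply $\theta$, and evaluating the resulting orbital integral is exactly what the character table encodes.
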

\begin{proof}
We will use the character table for positive depth supercuspidal representations from \cite[Theorem~5.3.2, Lemma~5.3.8]{MR2716688}. From Lemma \ref{lem:formaldegreeinv}, we only need to show the following distributional identity
\begin{equation}\label{eq:debackeridentity}
\frac{
\tr_{\pi_{\theta^{n}}}(\gam)}{\deg(\pi_{\theta^{n}})} = 
\frac{\tr_{\pi_{\theta}}(\gam^{n})}{\deg(\pi_{\theta})},
\quad \gam\in (\GL_{2})_{x_{\alp},0}^{\rss}.
\end{equation}

To prove (\ref{eq:debackeridentity}), we divide the proof to the situations depending on the depth of $\gam\in (\GL_{2})_{x_{\alp},0}$. We recall that from Lemma \ref{lem:discrminant} we have $d^{+}(\gam^{n}) = d^{+}(\gam)$. Note that in \cite{MR2716688}, the depth $d^{+}(\gam)$ is denoted as $n(\gam)$. 

When $d^{+}(\gam) =n(\gam)= 0$, we look at the first row of the table showing in \cite[Lemma~5.3.8]{MR2716688}. Since $d^{+}(\gam^{n}) = d^{+}(\gam)$, and $d(\pi_{\theta^{n}}) = d(\theta^{n}) = d(\theta) = d(\pi_{\theta})$, combining with Lemma \ref{lem:formaldegreeinv}, we only need to show the equality $\lam(\rho_{\theta}) = \lam(\rho_{\theta^{n}})$ following the notation in \cite[Lemma~5.3.1]{MR2716688}. But this follows directly from \cite[Lemma~5.3.1]{MR2716688} since $\lam(\rho_{\theta})$ depends only on $d(\theta)$. It follows that the equation (\ref{eq:debackeridentity}) holds when $d^{+}(\gam)  =n(\gam) = 0$.

When $d^{+}(\gam) = n(\gam)>0$, we look at the second and third rows of the table showing in \cite[Theorem~5.3.2]{MR2716688}. Following the notation in \cite[Theorem~5.3.2]{MR2716688} which we will explain in subsequent, we only need to establish the following facts,
\begin{enumerate}
\item\label{(a)} $|\eta(X_{\pi_{\theta}})| = |\eta(X_{\pi_{\theta^{n}}})|$, where $\eta$ is the Weyl discriminant introduced in \cite[\S 4.1]{MR2716688} and $X_{\pi_{\theta}}$ is introduced in \cite[p.35]{MR2716688};

\item\label{(b)} $\gam(X_{\pi_{\theta^{n}}}, {}^wY) = \gam(X_{\pi^{\theta}},{}^w[(1+Y)^{n}-1])$ for any $w\in W$ and $Y\in \Fg^{\p}_{n(\gam)}\bs (\Fz_{n(\gam)}+\Fg^{\p}_{n(\gam)^{+}})$, here $0<n(\gam)<\frac{r}{2}$. The constant $\gam$ is introduced in \cite[p.55]{MR2716688} and $\Fg^{\p}$ is the Lie algebra of $G^{\p} = T_{E_{\alp}}$ appearing in the construction of $\pi_{\theta}$;

\item\label{(c)} $\wh{\mu}_{X_{\pi_{\theta^{n}}}}(Y) = \wh{\mu}_{\pi_{\theta}}([1+Y]^{n}-1)$ for $Y\in V_{n(\gam)}$ and $n(\gam)>\frac{r}{2}$. Here $V_{n(\gam)}$ is introduced in \cite[Definition~1.6.5]{MR2716688};
\end{enumerate}

We first establish the fact $|\eta(X_{\pi_{\theta}})| = |\eta(X_{\pi_{\theta^{n}}})|$. From the beginning of \cite[p.35]{MR2716688}, by definition the following identity holds
$$
X_{\pi_{\theta^{n}}} = 
nX_{\pi_{\theta}}.
$$
In particular, the depth of $X_{\pi_{\theta}}$ and $X_{\pi_{\theta^{n}}}$ are the same. Following Lemma \ref{lem:discrminant} and \cite[Lemma~4.2.1]{MR2716688}, we immediately get (\ref{(a)}).

For (\ref{(b)}), by (\ref{(a)}) we are reduced to show the following identity
$$
\gam(nX_{\pi_{\theta}},{}^w Y) = 
\gam(X_{\pi_{\theta}}, {}^w(nY+f(Y)))
$$
where $f(Y)$ is a polynomial in $Y$ with lowest degree given by $Y^{2}$ lying in $\Fg^{\p}_{2n(\gam)}$. From the definition of $\gam(X,{}^wY)$ on \cite[p.55]{MR2716688}, we are reduced to show the following identity
$$
\gam(X_{\pi_{\theta}}, {}^wY)  =
\gam(X_{\pi_{\theta}},{}^w(Y+f(Y))).
$$
From \cite[p.34-35]{MR2716688}, $X_{\pi}\in \Fg^{\p}_{-r}\bs \Fg^{\p}_{(-r)^{+}}$, and $Y\in \Fg^{\p}_{n(\gam)}\bs (\Fz_{n(\gam)}+ \Fg^{\p}_{n(\gam)^{+}})$ with $0<n(\gam)<r/2$. While in the integral definition of $\gam(X_{\pi_{\theta}}, {}^wY)$ appearing in the bottom of \cite[p.55]{MR2716688}, the element $V\in \Fg_{s^{+}}$ with $s=(r-n(\gam))/2$, therefore we only need to show the following result.

\begin{num}
\item\label{hardlem}
the term appearing in \cite[p.55]{MR2716688}
$$
Q(V,V) = 
\Lam(\tr([X,V]\cdot [V,{}^wY])/2)
$$
is equal to 
$$
\Lam(\tr([X,V]\cdot [V,{}^w(Y+f(Y)])/2).
$$
Here $\Lam$ is a nontrivial additive character of the local field $F$ which has conductor $\Fp_{F}$ \cite[\S 1.1]{MR2716688}.
\end{num}
We establish (\ref{hardlem}). Note that $[X,V]\in \Fg_{(s-r)^{+}}$ and $[V,{}^wf(Y)]\in \Fg_{s+2n(\gam)}$ (here we note that the Weyl group has representatives in $(\GL_{2})_{x_{\alp},0}$, therefore its action on $f(Y)$ preserves the depth of $f(Y)$), from which we deduce $[X,V]\cdot [V,{}^wf(Y)] \in \Fg_{(s-r)^{+}+s+2n(\gam)}\subset \Fg_{n(\gam)}$. In particular $\tr([X,V]\cdot [V,{}^wf(Y)])$ lies in $\Fp_{F}$.
Hence we have established (\ref{hardlem}) and therefore (\ref{(b)}).

For (\ref{(c)}), in general the following Shalika germ expansion holds for $\wh{\mu}_{X}(Y)$,
$$
\wh{\mu}_{X}(Y) = 
\sum_{\CO\in \mathrm{Nil}(\Fg\Fl_{2}(F))}
\Gam_{\CO}(X)\wh{\mu}_{\CO}(Y).
$$
Note that when $X = X_{\pi}$ is elliptic, from \cite[p.171]{MR745601} and \cite[p.181]{MR745602}, the Shalika germs $\{\Gam_{\CO}(X)\}_{\CO\in \mathrm{Nil}(\Fg\Fl_{2}(F))}$ depend only on the depth of $X = X_{\pi}$. In particular $\Gam_{\CO}(X_{\pi})  = \Gam_{\CO}(nX_{\pi})$ for any $\CO\in \mathrm{Nil}(\Fg\Fl_{2}(F))$. 
For the term $\wh{\mu}_{\CO}(Y)$, following \cite[Lemma~5.1]{MR1375619},  $\wh{\mu}_{\CO}(Y)$ also only depends on the discriminant of $Y$ (on group $\GL_{2}(F)$ and Levi subgroups associated to $Y$), which, from Lemma \ref{lem:discrminant} and \cite[Proposition~1.3.1~(c)]{MR2716688}, still depends only on the depth of $Y$. Therefore
$$
\wh{\mu}_{\CO}(Y) = 
\wh{\mu}_{\CO}([1+Y]^{n}-1)
$$
for any $\CO\in \mathrm{Nil}(\Fg\Fl_{2}(F))$ and $Y\in V_{n(\gam)}$. Hence we have established the property (\ref{(c)}). 

It follows that we have established the proposition.

\end{proof}

\bibliographystyle{plain}
\bibliography{SymTransfer_Daniel_Zhilin}

\end{document}